\newcommand\CO{\mathbb{C}}
\newcommand\RE{\mathbb{R}}
\newcommand\RR{\mathbb{R}}
\newcommand\R{\mathbb{R}}
\newcommand\ST{\mathbb{S}}
\newcommand\C{\mathbb{C}}
\newcommand\LL{\mathbb{L}}
\definecolor{darkgreen}{cmyk}{1,0,1,.2}
\definecolor{m}{rgb}{1,0.1,1}
\definecolor{green}{cmyk}{1,0,1,0}
\definecolor{test}{rgb}{1,0,0}
\definecolor{cmyk}{cmyk}{0,1,1,0}
\newcommand\Index{\operatorname{Index}}
\newcommand\Tr{\operatorname{Tr}}
\newcommand\Rank{\operatorname{Rank}}
\newcommand\Ker{\operatorname{Ker}}
\newcommand\Hom{\operatorname{Hom}}
\newcommand\GL{\operatorname{GL}}
\newcommand\Sign{\operatorname{Sign}}
\newcommand\tr{\operatorname{tr}}
\newcommand\im{\operatorname{im}}
\newcommand\E{\mathcal E}
\newcommand\cH{\mathcal H}
\newcommand\ep{\epsilon}
\newcommand\maD{\mathcal D}
\newcommand\maT{\mathcal T}
\newcommand\maS{\mathcal S}
\newcommand\maA{\mathcal A}
\newcommand\maB{\mathcal B}
\newcommand\maJ{\mathcal J}
\newcommand\maM{\mathcal M}
\newcommand\tM{\tilde M}
\newcommand\odd{^{\mathrm{odd}}}
\newcommand\even{^{\mathrm{even}}}
\def\GL{\operatorname{GL}}
\theoremstyle{plain}
\newtheorem{theorem}{Theorem}[section]
\newtheorem{lemma}[theorem]{Lemma}
\newtheorem{proposition}[theorem]{Proposition}
\newtheorem{corollary}[theorem]{Corollary}
\theoremstyle{definition}
\newtheorem{definition}[theorem]{Definition}
\newtheorem{definition*}{Definition}
\theoremstyle{remark}
\newtheorem{remark}[theorem]{Remark}
\newtheorem{remarks*}{Remarks}
\begin{document}

\title[Index type invariants for twisted signature complexes]
{Index type invariants for twisted signature complexes\\ and homotopy invariance}
\author{Moulay Tahar Benameur}
\address{Laboratoire et D\'epartement de Math\'ematiques,
  UMR 7122,
  Universit\'e de Metz et CNRS,
  B\^at. A, Ile du Saulcy,
  F-57045 Metz Cedex 1, 
  France}
\email{benameur@math.univ-metz.fr}

\author{Varghese Mathai}
\address{Department of Mathematics, University of Adelaide,
Adelaide 5005, Australia}
\email{mathai.varghese@adelaide.edu.au}

\begin{abstract}
For a closed, oriented, odd dimensional manifold $X$, 
we define the rho invariant $\rho(X,\E,H)$ for the twisted odd signature operator
valued in a flat hermitian vector bundle $\E$,
where $H = \sum i^{j+1} H_{2j+1} $ is an odd-degree closed differential form on $X$ 
and $H_{2j+1}$ is a real-valued differential form of degree ${2j+1}$.
We show that  $\rho(X,\E,H)$ is independent of the
choice of metrics on $X$ and $\E$ and of the representative $H$ in the
cohomology class $[H]$.
We establish some basic functorial properties of the twisted rho invariant.
We express the twisted eta invariant in terms of spectral flow and the usual eta invariant.
In particular, we get a simple expression for it on closed oriented 
3-dimensional manifolds with a degree three flux form. 
A core technique used is our analogue of the Atiyah-Patodi-Singer theorem,
which we establish for the twisted signature operator on a compact, oriented manifold with boundary.
The homotopy invariance of the rho invariant $\rho(X,\E,H)$ is more delicate to establish, 
and is settled under further hypotheses on the fundamental group of $X$.
\end{abstract}

\keywords{twisted rho invariant, twisted eta invariant, twisted de Rham cohomology, 
twisted signature complex, twisted Atiyah-Patodi-Singer theorem, manifolds with boundary, spectral flow, 
homotopy invariance}

\subjclass[2010]{Primary 58J52; Secondary 57Q10, 58J40, 81T30.}

\maketitle

\section*{Introduction}

Atiyah, Patodi and Singer wrote a remarkable series of three papers \cite{APS1, APS2, APS3} 
that were published in the mid seventies, on non-local elliptic boundary value problems,
which have been intensely studied ever since, both in the mathematics and physics literature.
They applied their theory in particular to the important case of the signature 
operator on an oriented, compact manifold with boundary, where they identified 
the boundary operator which is now known as the {\em odd signature operator},
which is self-adjoint and elliptic, having spectrum in the real numbers. For this 
(and other elliptic self-adjoint operators), they introduced the eta invariant
which measures the spectral asymmetry of the operator and is a 
spectral invariant. Coupling with flat bundles, they introduced the closely related rho invariant,
which has the striking property that it is independent of the choice of Riemannian
metric needed in its definition. In this paper we generalize the construction of 
Atiyah-Patodi-Singer to the twisted signature complex with an odd-degree differential 
form as flux and with coefficients in a flat vector bundle.
Recall that the twisted de Rham complex was first defined for $3$-form fluxes by Rohm and
Witten in the appendix of \cite{RW} and has played an important role in string
theory \cite{BCMMS,AS}, for the Ramond-Ramond fields (and their charges) in
type II string theories lie in the twisted cohomology of spacetime.
$T$-duality of the type II string theories on compactified spacetime gives
rise to a duality isomorphism of twisted cohomology groups \cite{BEM}.
The twisted de Rham differential also appears in supergravity (cf. \cite{VanN,GHR}) and superstring theory (\cite{St86})
via Riemannian connections $\nabla$ with totally skew-symmetric, (de Rham) closed torsion tensor $H$,
that is, $\nabla_X Y = \nabla^g_X Y + \frac{1}{2} H (X, Y, -)$, where $\nabla^g$ denotes the 
Levi-Civita connection of the Riemannian metric $g$. 
Such Riemannian connections $\nabla$ have the same geodesics as the Levi-Civita connection $\nabla^g$.
Then the analogue of the de Rham 
differential, $\sum {e^i} \wedge \nabla_{e^i} = d + H\wedge = d_H,$ where $\{e^i\}$ is a local orthonormal 
basis,  is exactly the twisted de Rham differential.
The associated signature operator, $d_H + {d_H}^\dagger$ is precisely the twisted signature operator,
where ${}^\dagger$ denotes the adjoint.

Let $Y$ be a $2m$-dimensional compact, oriented Riemannian manifold without boundary, $\E$
a flat hermitian vector bundle over $Y$. If one uses the standard signature involution 
$\tau$ on $Y$, 
and consider signature operator $B^\E_H=\nabla^{\E, H} + {\nabla^{\E, H}}^\dagger,$ 
where $\nabla^{\E, H}=\nabla^\E+H\wedge\,\cdot\,$, 
$\nabla^\E$ is the canonical flat hermitian connection on $\E$, 
$ {\nabla^{\E, H}}^\dagger$ denotes its adjoint of $ \nabla^{\E, H}$, 
then the first surprising fact is that 
$$
B^\E_H \tau = - \tau B^\E_H
$$
if and only if 
\begin{equation}\label{eqn:fluxform}
H = \sum i^{j+1} H_{2j+1} 
\end{equation}
 is an 
odd-degree closed differential form on $Y$ and $H_{2j+1}$ is a real-valued differential form 
of degree ${2j+1}$. It is only in this case that one gets a {{generalization of the usual}} signature operator, in contrast to the 
case of the twisted de Rham complex, cf . \cite{BCMMS,AS,MW,MW2,MW3}. 
This is also the reason why we consider complex valued differential forms and  cohomology, 
in contrast to the previously mentioned literature. A topological reason for this phenomenon
is that Poincar\'e duality in our context asserts that there is a nondegenerate sesquilinear pairing
$$
H^{\bullet}(Y, \E, H) \otimes H^{\bullet}(Y, \E, -\overline H) \to \CO.
$$
It determines a quadratic form on $H^{\bullet}(Y, \E, H)$ 
when $-\overline H = H$. This implies that $H$ has to be 
purely imaginary. Using a particular scale invariance of twisted cohomology 
as in Lemma \ref{scaling}, we deduce that this is equivalent to 
$H$ being of the form given in \eqref{eqn:fluxform}.

{{When the compact manifold $Y$ has non empty boundary and }}assuming that the 
Riemannian metric is of product type near the boundary {{and that $H$ satisfies the absolute boundary condition}}, we explicitly identify 
the twisted signature operator near the boundary
to be 
$B^\E_H = \sigma\left(\frac{\partial}{\partial r} + D^\E_H \right),$ where 
$r$ is the coordinate in the normal direction, $\sigma$ is a bundle isomorphism and 
finally,  {{$D^\E_H$ is }} the self-adjoint elliptic operator  given on $\Omega^{2h}(\partial Y, \E)$ by
${{D^\E_H = i^m (-1)^{h} (\nabla^{\E, H} \star -  \star \nabla^{\E, -\overline H})}}$. 
There is a similar expression for the operator 
on odd degree forms on the boundary. $D^\E_H$ is an elliptic self-adjoint operator, and 
by \cite{APS3}, the non-local boundary condition given by $P^+(s\big|_{\partial Y})=0$
where $P^+$ denotes the orthogonal projection onto the eigenspaces with positive eigenvalues,
makes the pair $(B^\E_H; P^+)$
into an elliptic boundary value problem. By the Atiyah-Patodi-Singer index theorem,
$$
\Index(B^\E_H; P^+) =  {{\Rank (\E) }}\int_Y \alpha_0(x) dx - \left(\dim(\ker(D^\E_H))
+\eta(D^\E_H)\right),
$$
where $\eta(D^\E_H)$ denotes the eta invariant of the operator $D^\E_H$. Upon identifying the sum
$\Index(\nabla^{\E, H} + \nabla^{\E, H}; P^+) + \dim(\ker(D^\E_H))$ with the signature ${\rm Sign}(Y, \E, H)$ of a natural 
quadratic form on the image 
of the twisted relative cohomology $H^\bullet(Y, \partial Y, \E, H)$ inside the twisted absolute 
cohomology $H^\bullet(Y,  \E, H)$ (Theorem \ref{thm:sig}), one obtains (Corollary \ref{cor:sig})
\begin{equation}\label{sign-thm}
{\rm Sign}(Y, \E, H)= {{\Rank (\E) }}\int_Y \alpha_0(x) dx - \eta(D^\E_H).
\end{equation}
This is the main tool used to prove our results about the twisted rho invariant.

Let now $X$ be a closed, oriented, {{$2m - 1$}} dimensional Riemannian manifold and 
$H = \sum i^{j+1} H_{2j+1} $ an 
odd-degree closed differential form on $X$ where $H_{2j+1}$ is a real-valued differential form 
of degree ${2j+1}$. Denote by $\E$ a hermitian flat vector bundle over $X$ with the 
canonical flat connection $\nabla^\E$.
Consider the twisted odd signature operator acting on even degree forms by the formula
$$
{{D^\E_H = i^m (-1)^{ h} ( \nabla^{\E, H}\star_\E -  \star_\E \nabla^{\E, -{\overline{H}}})}} \text{ on }
\Omega^{2h}(X, \E).
$$
Then $D^{{\E}}_H$ is a self-adjoint
elliptic operator which has a well defined  eta invariant  $\eta(D^\E_H )$. The twisted rho invariant 
$\rho(X, \E, H)$ is defined to be the difference 
$$
\rho(X, \E, H) = \eta(D^\E_H ) - \Rank (\E) \, \eta(D_H ),
$$
where 
$D_H$ is the same signature operator corresponding to the trivial line bundle.
Although the eta invariant  $\eta(D^\E_H )$  is only a spectral invariant, the striking thing is that 
the twisted rho invariant $\rho(X,\E,H)$ is independent of the
choice of the Riemannian metric on $X$ and of the Hermitian metric on $\E$, and is therefore a differential invariant. 
This is analogous to the well known stability properties of the classical  APS rho invariant, see for instance \cite{BP, BR}.
The twisted rho invariant $\rho(X,\E,H)$ is also invariant if $H$ is deformed within its cohomology class.
We also establish some basic functorial properties of this twisted rho invariant.
We compute the twisted eta invariant using the important technique of spectral flow, 
and obtain in particular a simple formula for it for closed oriented 3-dimensional manifolds with a 
degree three flux form.

The homotopy invariance of the rho invariant $\rho(X,\E,H)$ is more delicate to establish, 
and is settled in section \ref{sect:homotopy} under the assumption that the maximal Baum-Connes conjecture holds
for the fundamental group of $X$. 
Discrete groups with the Haagerup property \cite{Haagerup}, also called a-T-menable groups, satisfy  the maximal Baum-Connes conjecture. These are groups that admit an isometric action on an affine Hilbert space which is proper, and examples include: all amenable groups, Coxeter groups, groups acting properly on trees, 
discrete subgroups of $SO(n,1)$ and of $SU(n,1)$. Discrete 
groups satisfying Kazhdan's ``property T'' do {\em not} satisfy the maximal Baum-Connes conjecture.
We adapt the method of analytic surgery 
introduced by Higson and Roe in \cite{HigsonRoeRho} 
applied to the usual rho invariant, which in turn both adds structure to and simplifies Keswani's original proof \cite{Keswani}.

The twisted analogue of analytic torsion was studied in \cite{MW, MW2} and 
is another source of inspiration for this paper. We mention that twisted Dirac operators,
known as {\em cubic Dirac operators} have been studied in representation theory
of Lie groups on homogeneous spaces \cite{Kostant, Slebarski}. It  also appears in the study of Dirac operators on 
loop groups and their representation theory \cite{Landweber}.   In recent work, we study the {{eta}} and rho invariants for the twisted Dirac operator and the relation to positive scalar curvature, \cite{BM, BM2}.

\medskip

\noindent {\bf Acknowledgments}
M.B. ~acknowledges support from the CNRS institute INSMI especially from the PICS-CNRS ``Progress in Geometric Analysis and Applications''.
V.M.~acknowledges support from the Australian Research Council.
Part of this work was done while the second author was visiting the ``Laboratoire de Math\'ematiques et Applications'' at Metz. This work also benefited from discussions with many colleagues during the NCG workshop at Oberwolfach and both authors are warmly grateful to them and to the organizers.

\section{Twisted de Rham complexes for manifolds with boundary}
To set up the notation in the paper, we review the twisted de Rham
cohomology \cite{RW,BCMMS} with an odd-degree flux form and with
coefficients in a flat vector bundle.
We show that the twisted cohomology does not change under the scalings of
the flux form when its degree is at least $3$.
We also establish homotopy invariance, Poincar\'e duality and K\"unneth
isomorphism for these cohomology groups.

\subsection{Flat vector bundles, representations and Hermitian metrics}
\label{sect:scalar}
Let $X$ be a connected, compact, oriented smooth manifold.
Let $\rho\colon\pi_1(X)\to\GL(E)$ be a representation of the fundamental
group $\pi_1(X)$ on a vector space $E$.
The associated vector bundle $p\colon\E\to X$ is given by
$\E=(E\times\widetilde X)/\sim$, where $\widetilde X$ denotes the universal
covering of $X$ and $(v,x\gamma)\sim(\rho(\gamma)v,x)$ for all
$\gamma\in\pi_1(X)$, $x\in\widetilde X$ and $v\in E$.
If the representation $\rho$ is real or complex, so is the bundle $\E$,
respectively.
A smooth section $s$ of $\E$ can be uniquely represented by a smooth 
equivariant map $\phi\colon\widetilde X\to E$, {{so}} satisfying
$\phi(x\gamma)=\rho(\gamma)^{-1}\phi(x)$ for all $\gamma\in\pi_1(X)$
and $x\in\widetilde X$. 

Given any vector bundle $p\colon\E\to X$ over $X$, denote by $\Omega^i(X,\E)$
the space of smooth differential $i$-forms on $X$ with values in $\E$. 
A {\it flat connection} on $\E$ is a linear map
$$
\nabla^\E\colon\Omega^i(X,\E)\to\Omega^{i+1}(X,\E)
$$
such that 
$$
\nabla^\E(f\omega)=df\wedge\omega+f\,\nabla^\E\omega 
\qquad\text{and}\qquad(\nabla^\E)^2=0 
$$
for any smooth function $f$ on $X$ and any $\omega\in\Omega^i(X,\E)$.
If the vector bundle $\E$ is associated with a representation $\rho$
as {{above}}, an element of $\Omega^\bullet(X,\E)$
can be uniquely represented as a $\pi_1(X)$-invariant element in 
$E\otimes\Omega^\bullet(\widetilde X)$.
If $\omega\in\Omega^\bullet(\widetilde X)$ and $v\in E$,
then $v\otimes\omega$ is said to be $\pi_1(X)$-invariant
if $\rho(\gamma)v\otimes (\gamma^{{{-1}}})^*\omega=v\otimes\omega$
for all $\gamma\in\pi_1(X)$.
On such a vector bundle, there is a {\it canonical flat connection}
$\nabla^\E$ given, under the above identification, by
$\nabla^\E(v\otimes\omega)=v\otimes d\omega$, 
where $d$ is the exterior derivative on forms.

The usual wedge product on differential forms can be extended to
$$
\wedge\colon\Omega^i(X)\otimes\Omega^{j}(X,\E)\to\Omega^{i+j}(X,\E).
$$
Together with the evaluation map $\E\otimes\E^*\to\CO$, we have another product
$$
\wedge\colon\Omega^i(X,\E)\otimes\Omega^{j}(X,\E^*)\to\Omega^{i+j}(X).
$$
A Riemannian metric $g_X$ defines the Hodge star operator 
$$
\star \colon\Omega^i(X,\E)\to\Omega^{n-i}(X,\E),
$$
where $n=\dim X$.
A Euclidean or Hermitian metric $g_\E$ on $\E$ determines an $\RE$-linear
bundle isomorphism $\sharp\colon\E\to\E^*$, which extends to an $\RE$-linear
isomorphism 
$$
\sharp\colon\Omega^i(X,\E)\to\Omega^i(X,\E^*).
$$
One sets $\star_\E=\star\;\sharp=\sharp\;\star$ and for any 
$\omega,\omega'\in\Omega^i(X,\E)$, let
$$
(\omega,\omega')=\int_X\omega\wedge{{\star_\E}}\,\omega'.
$$
This makes each $\Omega^i(X,\E)$, $0\le i\le n$, a pre-Hilbert space. {{Unless otherwise specified, we shall simply denote $\star_\E$ by $\star$}}. 

When $\E$ is associated to an orthogonal or unitary representation $\rho$
of $\pi_1(X)$, $g_\E$ can be chosen to be compatible with the canonical
flat connection.

\subsection{Twisted de Rham cohomology for manifolds with boundary}\label{sect:twistedDR} 
{{Let now $Y$ be a compact connected oriented manifold with boundary and let again $H$ be a closed odd differential form on $Y$. }} Given a flat vector bundle $p\colon\E\to Y$ and an odd-degree,
closed differential form $H$ on $Y$, we set 
$\Omega^{\bar 0}(Y,\E)_r:=\Omega\even(Y,\E)$ with relative boundary conditions,
$\Omega^{\bar 1}(Y,\E)_r:=\Omega\odd(Y,\E)$ with relative boundary conditions,
$\Omega^{\bar 0}(Y,\E)_a:=\Omega\even(Y,\E)$ with absolute boundary conditions,
$\Omega^{\bar 1}(Y,\E)_a:=\Omega\odd(Y,\E)$ with absolute boundary conditions,
and $\nabla^{\E,H}:=\nabla^\E+H\wedge\,{{\bullet}}\;$.
{{We can assume without loss of generality and for simplicity that the closed form}} $H$ does not contain a $1$-form
component, which can be absorbed in the flat connection $\nabla^\E$. {{We denote by $\star$ the star Hodge operator associated with the orientation of $Y$ (compatible with that of $\partial Y$).
For a differential form $\omega$ on $Y$, the relative boundary condition is $B_r(\omega)=i^*\omega$, while the absolute 
boundary condition can be written using the orientation as $B_a(\omega)= i^*(\star\omega)$. Here $i^*$ is the restriction of a differential form to the boundary 
$i:\partial Y \hookrightarrow Y$. Notice that  if we write $\omega=\omega_0+\omega_1 dr$ in a collar neighborhood 
$Y_\epsilon\simeq \partial Y\times (-\epsilon, 0]$ of the boundary manifold 
$\partial Y$, then the relative boundary condition is $B_r(\omega)=\omega_0$ while the absolute 
boundary condition is  $B_a(\omega)= \omega_1$, \cite{GilkeyBook}. }}
For any $H$ as above, we define the {\it twisted relative de Rham cohomology groups of $\E$} as the quotients
$$
H^{\bar k}(Y, \partial Y,\E,H)=
\frac{\ker\,(\nabla^{\E,H}\colon\Omega^{\bar k}({{Y}},\E)_r\to
 \Omega^{\overline{k+1}}({{Y}},\E)_r)}
{\im\,(\nabla^{\E,H}\colon\Omega^{\overline{k+1}}({{Y}},\E)_r\to
 \Omega^{\bar k}({{Y}},\E)_r)},\quad k=0,1.
$$
{{When $H$ is chosen to satisfy the absolute boundary condition, }} we define the {\it twisted absolute de Rham cohomology groups of $\E$} as the quotients fulfils
$$
H^{\bar k}(Y, \E,H)=
\frac{\ker\,(\nabla^{\E,H}\colon\Omega^{\bar k}({{Y}},\E)_a\to
 \Omega^{\overline{k+1}}({{Y}},\E)_a}
{\im\,(\nabla^{\E,H}\colon\Omega^{\overline{k+1}}({{Y}},\E)_a\to
 \Omega^{\bar k}({{Y}},\E)_a)},\quad k=0,1.
$$
Here and below, the bar over an integer means taking the value modulo $2$. {{Notice that since the absolute cohomology is isomorphic 
to the cohomology of $Y$
we can always find a representative of the class of $H$ which satisfies the absolute boundary condition. 
Therefore and although the results of this paper can be stated for general $H$, we shall assume that $H$ fulfils the absolute boundary condition  so as to avoid
unnecessary technicalities  when dealing with a general closed odd degree form. }}

{{We shall see in the next subsection that the dimensions of the vector spaces}} $H^{\bar k}(Y,\E,H)$ and $H^{\bar k}(Y, \partial Y, \E,H)$  ($k=0,1$) are independent of the
choice of the Riemannian metric on $Y$ or the Hermitian metric on $\E$. 
The {\em twisted absolute Betti numbers} are denoted by 
$$
b_{\bar k, {{H}}}^a =b_{\bar k}(Y,\E,H):=\dim H^{\bar k}(Y,\E,H),\quad k=0,1,
$$
and the {\em twisted relative Betti numbers} are denoted by 
$$
b_{\bar k, {{H}}}^r =b_{\bar k}(Y, \partial Y,\E,H):=\dim H^{\bar k}(Y, \partial Y,\E,H),\quad k=0,1.
$$

\begin{lemma}\label{lem:fluxindep}
The absolute and relative twisted Betti numbers only depend on the cohomology class of the differential form $H$ in $H^{\bar 1}(Y)_a$.
\end{lemma}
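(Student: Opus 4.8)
The plan is to realise a change of flux within a fixed cohomology class by an explicit isomorphism of the corresponding twisted complexes, the same mechanism that makes ordinary de Rham cohomology insensitive to the exact part of a closed form. So suppose $H$ and $H'$ are closed odd-degree forms on $Y$, both satisfying the absolute boundary condition (as assumed throughout), with $[H]=[H']$ in $H^{\bar 1}(Y)_a$; then $H-H'=d\beta$ for some even-degree form $\beta\in\Omega\even(Y)$ (we may take the primitive to be of even degree, since the odd-degree part of any primitive is necessarily closed). Since every form on a finite-dimensional manifold is nilpotent under wedge product, $e^{\beta}=\sum_{k\ge0}\beta^{\wedge k}/k!$ is a well-defined invertible even-degree form with inverse $e^{-\beta}$, so $\Phi:=e^{\beta}\wedge(-)$ is a parity-preserving automorphism of $\Omega^{\bullet}(Y,\E)$.

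First I would verify that $\Phi$ intertwines the two twisted differentials. Using $\nabla^{\E}(e^{\beta}\wedge\alpha)=d(e^{\beta})\wedge\alpha+e^{\beta}\wedge\nabla^{\E}\alpha$, the identity $d(e^{\beta})=d\beta\wedge e^{\beta}=e^{\beta}\wedge d\beta$ (the two factors having complementary parity), and the fact that the odd form $H'$ commutes past the even form $e^{\beta}$, one gets
$$
\nabla^{\E,H'}(e^{\beta}\wedge\alpha)=e^{\beta}\wedge\bigl(\nabla^{\E}\alpha+(d\beta+H')\wedge\alpha\bigr)=e^{\beta}\wedge\nabla^{\E,H}\alpha ,
$$
that is, $\Phi\circ\nabla^{\E,H}=\nabla^{\E,H'}\circ\Phi$; hence $\Phi$ takes $\nabla^{\E,H}$-cocycles (resp.\ coboundaries) to $\nabla^{\E,H'}$-cocycles (resp.\ coboundaries), and $\Phi^{-1}$ does the reverse. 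What then remains is to check that $\Phi$ is compatible with the boundary conditions, and this is the only delicate point.

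For the relative conditions it is immediate, since $i^{*}(e^{\beta}\wedge\omega)=e^{i^{*}\beta}\wedge i^{*}\omega$ shows that $\Phi$ and $\Phi^{-1}$ both preserve $\{i^{*}\omega=0\}$; so $\Phi$ descends to an isomorphism $H^{\bar k}(Y,\partial Y,\E,H)\cong H^{\bar k}(Y,\partial Y,\E,H')$ and $b^{r}_{\bar k,H}=b^{r}_{\bar k,H'}$. For the absolute conditions, working in a collar $Y_{\epsilon}\simeq\partial Y\times(-\epsilon,0]$ with $\omega=\omega_{0}+\omega_{1}\,dr$ and $\beta=\beta_{0}+\beta_{1}\,dr$, one computes that the normal ($dr$) component of $e^{\beta}\wedge\omega$ at $r=0$ equals $e^{\beta_{0}}\wedge(\omega_{1}\pm\beta_{1}\wedge\omega_{0})|_{\partial Y}$, so $\Phi$ preserves $\{B_{a}(\omega)=\omega_{1}|_{\partial Y}=0\}$ as soon as the primitive $\beta$ itself satisfies $B_{a}(\beta)=\beta_{1}|_{\partial Y}=0$. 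This can always be arranged: since $H$ and $H'$ obey the absolute boundary condition so does $d\beta=H-H'$, and replacing $\beta$ by $\beta+d\gamma$ with $\gamma=g(r)\,\pi^{*}(\beta_{1}|_{\partial Y})$ for a cut-off $g$ supported in the collar with $g(0)=0$ and suitable $g'(0)$, where $\pi\colon Y_{\epsilon}\to\partial Y$ is the projection, leaves $d\beta$ unchanged while killing the normal component of $\beta$ on $\partial Y$. With such a $\beta$, $\Phi^{\pm1}$ restrict to mutually inverse isomorphisms of the absolute complexes, whence $H^{\bar k}(Y,\E,H)\cong H^{\bar k}(Y,\E,H')$ and $b^{a}_{\bar k,H}=b^{a}_{\bar k,H'}$.

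The main obstacle is precisely this compatibility with the absolute boundary condition: the exponential factor $e^{\beta}$ generically produces a spurious normal component on $\partial Y$, so one cannot use an arbitrary primitive — one has to spend the freedom of modifying $\beta$ by an exact form, together with the standing assumption that the flux representatives satisfy the absolute boundary condition, to place $\beta$ in the absolute domain. (Alternatively, once the Poincar\'e--Lefschetz type duality for twisted cohomology is available, the absolute statement can be deduced from the relative one; but the collar computation above keeps the proof self-contained.)
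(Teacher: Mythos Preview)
Your proof is correct and follows essentially the same route as the paper's: both use the exponential gauge transformation $\varepsilon_B=e^{B}\wedge(-)$ to intertwine $\nabla^{\E,H}$ and $\nabla^{\E,H'}$ and thereby identify the two twisted complexes. The paper's argument is terser: it simply takes the primitive $B\in\Omega^{\bar 0}(Y)_a$ from the outset (which is justified by the hypothesis that $[H]=[H']$ in the \emph{absolute} cohomology $H^{\bar 1}(Y)_a$, so the primitive may be chosen in the absolute domain), and then asserts that $\varepsilon_B$ induces isomorphisms on both the absolute and relative twisted cohomologies without spelling out the boundary-condition check. Your version makes explicit what the paper leaves implicit—verifying that $e^{\beta}\wedge(-)$ preserves the relative subcomplex automatically, and showing how to adjust $\beta$ by an exact form so that it also preserves the absolute subcomplex—so it is a more detailed rendering of the same argument rather than a different one.
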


\begin{proof}
{{Suppose $H$ is replaced by $H'=H-dB$ for some $B\in\Omega^{\bar 0}(Y)_a$,
then there is an isomorphism $\varepsilon_B:=e^B\wedge\cdot\,\colon
\Omega^\bullet(Y,\E)\to\Omega^\bullet(Y,\E)$ satisfying}}
$$
\varepsilon_B\circ\nabla^{\E,{{H'}}}=\nabla^{\E,{{H}}}\circ\varepsilon_B.
$$
{{Therefore the Poincar\'e lemma holds for the twisted differential when the
space is contractible. 
In general, $\varepsilon_B$ induces an isomorphism (denoted by the same)
\begin{align*}\label{e^B}
& \varepsilon_B\colon H^\bullet(Y,\E,H)\to H^\bullet(Y,\E,H')\\
& \varepsilon_B\colon H^\bullet(Y, \partial Y, \E,H)\to H^\bullet(Y,\partial Y,\E,H')
\end{align*}
on the twisted de Rham cohomology.}}
\end{proof}

\subsection{Homotopy invariance, products} Here we omit the proofs of all the statements as they are 
straightforward generalizations of the corresponding statements in the case of manifolds without boundary
\cite{MW,MW2} and the absolute and relative cohomology for manifold with boundary, cf. \cite{GilkeyBook,Schwarz}.

\begin{proposition}\label{prop:homotopy}
Given ${{Y}}$, $\E$ and $H$ as above, any smooth map $f\colon {{X\to Y}}$
of compact manifolds with boundary (implicitly assumed to map $\partial X$ to $\partial Y$) induces homomorphisms
$$
f^*\colon H^\bullet({{Y}},\E,H)\to H^\bullet({{X}},f^*\E,f^*H), \qquad 
f^*\colon H^\bullet(Y, \partial Y, \E,H) \to H^\bullet({{X}},\partial X, f^*\E,f^*H).
$$
Then the induced map $f^*$ depends only on the homotopy class of $f$.
\end{proposition}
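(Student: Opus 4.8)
The plan is to follow the classical proof of homotopy invariance of de Rham cohomology, monitoring the two extra ingredients: the flux term $H\wedge\cdot$ and the relative/absolute boundary conditions. I would first check that $f^{*}$ is a morphism of twisted complexes. The pullback $f^{*}\nabla^{\E}$ of a flat connection is again flat, and since $H$ is a scalar form, $f^{*}(H\wedge\omega)=(f^{*}H)\wedge(f^{*}\omega)$ with $f^{*}H$ again a closed odd-degree form as in \eqref{eqn:fluxform}; hence $f^{*}\circ\nabla^{\E,H}=\nabla^{f^{*}\E,\,f^{*}H}\circ f^{*}$. Because $f$ carries $\partial X$ into $\partial Y$, the identity $i_{X}^{*}\circ f^{*}=(f|_{\partial X})^{*}\circ i_{Y}^{*}$ shows that $f^{*}$ takes a form obeying the relative boundary condition to one that obeys it, and likewise for the absolute condition once $f$ is chosen adapted to collar neighbourhoods of the two boundaries (any boundary-preserving map is homotopic through such maps to an adapted one, so this costs nothing once homotopy invariance is in hand). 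This produces the two homomorphisms $f^{*}$ on the absolute and on the relative twisted cohomology.

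For the homotopy invariance I would use the cylinder construction. Let $F\colon X\times[0,1]\to Y$ be a smooth homotopy from $f_{0}$ to $f_{1}$ with $F(\partial X\times[0,1])\subseteq\partial Y$, let $p\colon X\times[0,1]\to X$ be the projection, and $j_{t}\colon X\to X\times[0,1]$, $j_{t}(x)=(x,t)$, so that $f_{t}=F\circ j_{t}$. Since $[0,1]$ is contractible and $F^{*}\nabla^{\E}$ is flat, parallel transport in the $[0,1]$-direction identifies $F^{*}\E$ with $p^{*}(f_{0}^{*}\E)$, under which all the connections $f_{t}^{*}\nabla^{\E}$ become the single connection $\nabla^{f_{0}^{*}\E}$ on $X$; moreover $f_{1}^{*}H=f_{0}^{*}H+d(KF^{*}H)$, where $K$ is the fibre-integration operator sending $\omega=\alpha_{t}+dt\wedge\beta_{t}$ (with $\iota_{\partial_{t}}\alpha_{t}=\iota_{\partial_{t}}\beta_{t}=0$) to $(K\omega)_{x}=\int_{0}^{1}\beta_{t}\,dt$. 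Because wedging with the scalar form $F^{*}H$ satisfies the Leibniz rule and commutes with $\int_{0}^{1}dt$, the classical computation goes through unchanged and yields a twisted chain homotopy showing $j_{0}^{*}=j_{1}^{*}$ on the twisted cohomology of $X\times[0,1]$, once the two $f_{t}^{*}H$-twisted complexes on $X$ are identified by the isomorphism $\varepsilon_{-KF^{*}H}$ of Lemma~\ref{lem:fluxindep}. Since $K$ integrates over a direction tangent to $\partial X$, it preserves both the relative and the absolute boundary conditions, so the chain homotopy descends to both cohomologies; composing with $F^{*}$ then gives $f_{0}^{*}=f_{1}^{*}$, up to the canonical identifications of the pullback flat bundle and of the pullback flux class carried by the homotopy.

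The analytic core — the Leibniz-rule computation producing the chain homotopy $K$ — is verbatim the untwisted one, so I would not reproduce it. The step that requires care, and that I expect to be the only genuine obstacle, is the bookkeeping: making precise the identifications $f_{0}^{*}\E\cong f_{1}^{*}\E$ (flat parallel transport along the homotopy) and $H^{\bullet}(X,f_{0}^{*}\E,f_{0}^{*}H)\cong H^{\bullet}(X,f_{1}^{*}\E,f_{1}^{*}H)$ (via $\varepsilon_{-KF^{*}H}$, Lemma~\ref{lem:fluxindep}), and verifying line by line that $f^{*}$ and $K$ respect the relative and the absolute boundary conditions. This is precisely the kind of routine verification that, following \cite{MW,MW2,GilkeyBook,Schwarz}, the paper leaves to the reader.
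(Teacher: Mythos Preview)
Your proposal is correct and follows exactly the approach the paper has in mind: the paper omits the proof entirely, stating only that it is a ``straightforward generalization of the corresponding statements in the case of manifolds without boundary \cite{MW,MW2} and the absolute and relative cohomology for manifolds with boundary, cf.\ \cite{GilkeyBook,Schwarz}.'' Your cylinder construction with fibre integration $K$, together with the bookkeeping via flat parallel transport and the isomorphism $\varepsilon_{-KF^{*}H}$ of Lemma~\ref{lem:fluxindep}, is precisely the routine verification that the paper leaves to the reader, and you have identified the only genuinely new points (the flux term and the boundary conditions) correctly.
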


\begin{corollary}\label{cor:hmtp}
Suppose $X$, $X'$ are smooth manifolds with boundary and $H$, $H'$ are closed, odd-degree
forms on $X$, $X'$, respectively. Let  $f\colon X\to X'$ be a smooth homotopy equivalence such that
$[f^*H']=[H]$, then $H^\bullet(X,H)\cong H^\bullet(X',H')$ and
 $H^\bullet(X,\partial X, H)\cong H^\bullet(X', \partial X', H').$
\end{corollary}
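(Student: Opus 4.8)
The plan is to deduce the corollary formally from two facts already established: the homotopy invariance of the induced maps on twisted cohomology (Proposition~\ref{prop:homotopy}) and the flux-deformation isomorphism $\varepsilon_{B}=e^{B}\wedge\,\cdot\,$ exhibited in the proof of Lemma~\ref{lem:fluxindep}. Let $g\colon X'\to X$ be a smooth homotopy inverse of $f$, chosen so that $g\circ f\simeq\mathrm{id}_{X}$ and $f\circ g\simeq\mathrm{id}_{X'}$ through maps of pairs (so that the relative assertion also makes sense); if one wishes to keep flat coefficients $\E'$ on $X'$ one puts $\E=f^{*}\E'$, and the argument below is unchanged, so we suppress $\E$ exactly as in the statement.

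First I would record the relevant maps. By Proposition~\ref{prop:homotopy}, $f$ induces $f^{*}\colon H^{\bullet}(X',H')\to H^{\bullet}(X,f^{*}H')$ and, in the relative case, $f^{*}\colon H^{\bullet}(X',\partial X',H')\to H^{\bullet}(X,\partial X,f^{*}H')$. Since $[f^{*}H']=[H]$ in $H^{\bar 1}(X)_{a}$, we may write $f^{*}H'=H-dB$ with $B\in\Omega^{\bar 0}(X)_{a}$, and the proof of Lemma~\ref{lem:fluxindep} furnishes isomorphisms $\varepsilon_{B}\colon H^{\bullet}(X,H)\to H^{\bullet}(X,f^{*}H')$ and $\varepsilon_{B}\colon H^{\bullet}(X,\partial X,H)\to H^{\bullet}(X,\partial X,f^{*}H')$. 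Composing, $\Phi:=\varepsilon_{B}^{-1}\circ f^{*}$ is the natural candidate for the asserted isomorphism $H^{\bullet}(X',H')\to H^{\bullet}(X,H)$, and likewise in the relative case.

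To see that $\Phi$ is an isomorphism I would run the standard two-sided-inverse argument. One first notes $[g^{*}H]=[H']$: applying $g^{*}$ to $[f^{*}H']=[H]$ gives $[(f\circ g)^{*}H']=[g^{*}H]$, and $[(f\circ g)^{*}H']=[H']$ because $f\circ g\simeq\mathrm{id}_{X'}$. Hence the same construction applied to $g$ produces $g^{*}\colon H^{\bullet}(X,f^{*}H')\to H^{\bullet}(X',(f\circ g)^{*}H')$ together with a deformation isomorphism identifying $H^{\bullet}(X',(f\circ g)^{*}H')$ with $H^{\bullet}(X',H')$, and similarly for $f^{*}\circ g^{*}$ on $X$. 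By functoriality of pullback on the twisted de Rham complex one has $g^{*}\circ f^{*}=(f\circ g)^{*}$ and $f^{*}\circ g^{*}=(g\circ f)^{*}$. Feeding the homotopies $f\circ g\simeq\mathrm{id}_{X'}$ and $g\circ f\simeq\mathrm{id}_{X}$ into Proposition~\ref{prop:homotopy} shows that each of these composites coincides, up to the appropriate $\varepsilon$-identification, with the identity, hence is an isomorphism; therefore $f^{*}$ is injective and surjective on the relevant groups, so bijective, and $\Phi=\varepsilon_{B}^{-1}\circ f^{*}$ is an isomorphism. The relative statement is obtained verbatim, using the relative versions of Proposition~\ref{prop:homotopy} and Lemma~\ref{lem:fluxindep} and homotopies of pairs.

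The one point that is not purely formal, and which I expect to be the main obstacle, is to upgrade Proposition~\ref{prop:homotopy} to a \emph{quantitative} homotopy formula at the level of flux forms: if $F\colon X\times[0,1]\to X'$ is a (pair) homotopy from $f_{0}$ to $f_{1}$, then $f_{1}^{*}H'-f_{0}^{*}H'=dD_{F}$ with $D_{F}=\int_{[0,1]}F^{*}H'$ the fiber integral over the interval, and one must verify that $f_{0}^{*}=\varepsilon_{D_{F}}^{-1}\circ f_{1}^{*}$ on twisted cohomology. This is the classical de Rham homotopy argument carried out for the twisted differential $\nabla^{\E,H}$: one builds the usual chain homotopy on $X\times[0,1]$, conjugates it by $e^{B}\wedge\,\cdot\,$ so that it intertwines $\nabla^{\E,f_{0}^{*}H'}$ and $\nabla^{\E,f_{1}^{*}H'}$, and checks compatibility with the absolute and relative boundary conditions, where the product-collar hypothesis and the fact that $H$ satisfies the absolute boundary condition make the bookkeeping go through. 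Once this refined homotopy invariance is available, the two paragraphs above finish the proof with no further calculation.
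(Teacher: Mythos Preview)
The paper omits the proof of this corollary entirely, declaring at the start of the subsection that all proofs are ``straightforward generalizations'' of the closed-manifold case in \cite{MW,MW2}. Your argument---pulling back along $f$, correcting the flux via the $\varepsilon_{B}$ isomorphism of Lemma~\ref{lem:fluxindep}, and then establishing bijectivity by running the same construction with a homotopy inverse $g$ and invoking Proposition~\ref{prop:homotopy}---is exactly the standard argument one has in mind, and is correct. Your identification of the one non-formal point is also apt: since for homotopic maps $f_{0}\simeq f_{1}$ the induced maps $f_{0}^{*}$ and $f_{1}^{*}$ land in cohomology groups with \emph{different} fluxes $f_{0}^{*}H'$ and $f_{1}^{*}H'$, the statement ``$f^{*}$ depends only on the homotopy class of $f$'' in Proposition~\ref{prop:homotopy} already implicitly contains the refined assertion $f_{0}^{*}=\varepsilon_{D_{F}}^{-1}\circ f_{1}^{*}$ that you spell out; this is indeed proved by the usual de Rham homotopy operator, conjugated by $e^{B}\wedge\cdot\,$, and the boundary bookkeeping goes through under the standing absolute-boundary hypothesis on $H$.
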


\subsection{Poincar\'e duality}

{{Let now $\E,\E'$ be flat vector bundles and let $H,H'$ be closed odd-degree
differential forms on a smooth compact manifold with boundary $X$.
Then the sequilinear product \label{lemma:cup}
\[   \Omega^{\bar k}(X,\E)_a\otimes\Omega^{\bar l}(X,\E')_r
{\longrightarrow}\Omega^{\overline{k+l}}(X,\E\otimes\E')_r  \]
given by 
$$
\omega \otimes \omega' \longmapsto \omega \wedge \bar \omega'
$$
induces a natural cup product
\[   H^{\bar k}(X,\E,H)\otimes H^{\bar l}(X, \partial X \E',H')
  \stackrel{\cup}{\longrightarrow}H^{\overline{k+l}}(X,\partial X, \E\otimes\E',H+\overline H'), \]
where $k,l=0,1$.
This is a consequence of the formula
\[  \nabla^{\E\otimes\E'\!,\,H+{{\bar {H'}}}}(\omega\wedge{{\bar{\omega'}}})=\nabla^{\E,H}\omega
    \wedge {{{\bar{\omega'}}}}+(-1)^{\bar k}\,\omega\wedge{{\bar{\nabla^{\E'\!,\,H'}\omega'}}},   \]
where $\omega\in\Omega^{\bar k}(X,\E)_a$, $\omega'\in\Omega^{\bar l}(X,\E')_r$.
When $\E'=\E^*$, $H'=-\overline H$, the cup product, composed with the pairing between
$\E$ and $\E^*$, takes values in $H^{\overline{k+l}}(X, \partial X)$. In particular, there is a natural sesquilinear cup product
\[   H^{\bar k}(X,\E,H)\otimes H^{\bar l}(X, \partial X,\E^*,-\overline H)
  \stackrel{\cup}{\longrightarrow}H^{\overline{k+l}}(X, \partial X).                \]
}}

\begin{proposition}[Poincar\'e duality]\label{prop:pd}
Let $X$ be an oriented {{(connected)}} compact manifold with boundary of dimension $n$.
Let $\E$ be a flat hermitian vector bundle on $X$. 
Suppose $H$ is a closed odd-degree differential form on $X$ which is boundary compatible.
Then, for $k=0,1$, there is a natural isomorphism
\[   H^{\bar k}(X,\E,H)\cong(H^{\overline{n-k}}(X, \partial X, \E^*,-\overline H))^*.   \]
\end{proposition}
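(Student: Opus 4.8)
The plan is to show that the sesquilinear cup pairing
\[
P\colon H^{\bar k}(X,\E,H)\otimes H^{\overline{n-k}}(X,\partial X,\E^*,-\overline H)\longrightarrow H^{\bar n}(X,\partial X)\longrightarrow\CO,\qquad [\omega]\otimes[\omega']\longmapsto\int_X\omega\wedge\bar\omega',
\]
built from the cup product displayed just above Proposition~\ref{prop:pd} followed by the top-degree integration $\int_X$, is nondegenerate; the asserted isomorphism is then its adjoint. Well-definedness is the Leibniz rule recorded before the Proposition together with Stokes' theorem: the flux of the target complex is $H+\overline{(-\overline H)}=0$, so $\omega\wedge\bar\omega'$ is an ordinary closed form, and it lies in the relative complex because $i^*\omega'=0$; replacing $\omega$ by $\nabla^{\E,H}\alpha$ or $\omega'$ by $\nabla^{\E^*,-\overline H}\beta$ (with $\beta$ relative) changes $\omega\wedge\bar\omega'$ by a relative--exact form, whose integral over $X$ vanishes. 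Since only the degree-$n$ component of $\omega\wedge\bar\omega'$ survives $\int_X$, and $X$ is connected and oriented, the composite lands in $\CO$ via the canonical top-degree evaluation. Everything thus reduces to nondegeneracy of $P$.

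For nondegeneracy I would invoke twisted Hodge theory on the manifold with boundary. First, $(\nabla^{\E,H})^2=0$: writing $H$ as a sum of odd-degree forms gives $H\wedge H=0$, and $dH=0$, so $\nabla^{\E,H}=\nabla^\E+H\wedge\cdot$ squares to zero, and likewise $\nabla^{\E^*,-\overline H}$. Form the $\ZA/2$-graded twisted de Rham operator $\maD=\nabla^{\E,H}+(\nabla^{\E,H})^\dagger$ on $\Omega^{\bar 0}(Y,\E)\oplus\Omega^{\bar 1}(Y,\E)$. Since $H\wedge\cdot$ is zeroth order it does not alter the principal symbol, so the pairs $(\maD,B_a)$ and $(\maD,B_r)$ are self-adjoint elliptic boundary value problems exactly as in the untwisted case of \cite{GilkeyBook,Schwarz}. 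The associated Hodge--Morrey--Friedrichs decomposition then identifies $H^{\bar k}(X,\E,H)$ with the finite-dimensional space $\maH^{\bar k}_a(\E,H)$ of forms annihilated by both $\nabla^{\E,H}$ and $(\nabla^{\E,H})^\dagger$ and satisfying $B_a$, and likewise $H^{\overline{n-k}}(X,\partial X,\E^*,-\overline H)\cong\maH^{\overline{n-k}}_r(\E^*,-\overline H)$ with the relative condition; this is the manifold-with-boundary analogue of the Hodge theory used in the closed case in \cite{MW,MW2}.

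The bridge between the two sides is the Hodge star. The adjoint identity $(\nabla^{\E,H})^\dagger=\pm\,\star_\E^{-1}\nabla^{\E^*,-\overline H}\star_\E$ — the same computation that produces the self-adjoint boundary operator $D^\E_H$ of the introduction, the passage $H\mapsto-\overline H$ together with the conjugation built into $\sharp$ being precisely what is needed to make $\star_\E$ intertwine the two twisted complexes — shows that $\star_\E$ carries the $(\E,H)$-Laplacian to the $(\E^*,-\overline H)$-Laplacian; and since $\star$ exchanges the boundary conditions $B_a(\omega)=i^*(\star\omega)$ and $B_r(\omega)=i^*\omega$, it exchanges absolute and relative harmonic fields. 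Hence $\star_\E$ restricts to a conjugate-linear isomorphism $\maH^{\bar k}_a(\E,H)\cong\maH^{\overline{n-k}}_r(\E^*,-\overline H)$, and pairing a nonzero harmonic representative $\omega$ of a class in $H^{\bar k}(X,\E,H)$ against its image recovers, up to sign and conjugation, the $L^2$ inner product $\int_X\omega\wedge\star_\E\omega=\|\omega\|^2>0$. Therefore $P$ is nondegenerate, which proves the Proposition.

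The step I expect to require real care is the middle one: establishing the Hodge--Morrey--Friedrichs decomposition for the \emph{mixed-degree} operator $\nabla^{\E,H}$ under absolute and relative boundary conditions, and pinning down the signs and conjugations in $(\nabla^{\E,H})^\dagger=\pm\,\star_\E^{-1}\nabla^{\E^*,-\overline H}\star_\E$ so that the intertwining by $\star_\E$ is consistent both with the substitution $H\mapsto-\overline H$ and with the interchange of boundary conditions. The references \cite{GilkeyBook,Schwarz} work one degree at a time; one must check that the degree-shifting zeroth-order perturbation $H\wedge\cdot$ leaves both boundary value problems elliptic and self-adjoint, and that the harmonic-field descriptions of absolute and relative twisted cohomology, together with the $\star$-duality, assemble correctly across the $\ZA/2$-grading.
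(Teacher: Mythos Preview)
The paper does not actually prove this proposition: the subsection containing it opens with the blanket remark that ``we omit the proofs of all the statements as they are straightforward generalizations of the corresponding statements in the case of manifolds without boundary \cite{MW,MW2} and the absolute and relative cohomology for manifold with boundary, cf.\ \cite{GilkeyBook,Schwarz}.'' Your Hodge-theoretic argument---pairing via cup product and integration, then exhibiting nondegeneracy by sending a harmonic representative to its $\star_\E$-image---is exactly the standard route those references take, and is correct; the Hodge isomorphism you need is even recorded separately in the paper as Proposition~\ref{prop:hodge} (also without proof). Your closing caveat about the Hodge--Morrey--Friedrichs decomposition for the mixed-degree operator is well placed, and is precisely the point the paper also takes on faith by citing \cite{GilkeyBook,Schwarz}.
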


\subsection{Twisted Kunneth theorem}

\begin{proposition}[K\"unneth isomorphism]\label{prop:ku}
Let $X_1$ be a smooth compact manifold with boundary, and let $X_2$ be a smooth closed manifold.
Suppose $\E_i$ are flat vector bundles over $X_i$ and $H_i$, closed
odd-degree forms on $X_i$, respectively, and $H_1$ is boundary compatible.
Let $\pi_i\colon X_1\times X_2\to X_i$ be the projections.
Set $\E_1\boxtimes\E_2=\pi_1^*\E_1\otimes\pi_2^*\E_2$ and
$H_1\boxplus H_2=\pi_1^*H_1+\pi_2^*{{\overline H_2}}$.
Then for each $k=0,1$,
there are natural isomorphisms
\begin{align*}
H^{\bar k}(X_1\times X_2,\E_1\boxtimes\E_2,H_1\boxplus H_2) 
& \cong\bigoplus_{l=0,1}H^{\bar l}(X_1,\E_1,H_1)
\otimes H^{\overline{k-l}}(X_2,\E_2,H_2),\\
H^{\bar k}(X_1\times X_2, \partial X_1 \times X_2,\E_1\boxtimes\E_2,H_1\boxplus H_2) 
& \cong\bigoplus_{l=0,1}H^{\bar l}(X_1, \partial X_1,\E_1,H_1)
\otimes H^{\overline{k-l}}(X_2,\E_2,H_2).   
\end{align*}
\end{proposition}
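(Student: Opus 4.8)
The plan is to realise the twisted de Rham complex of $X_1\times X_2$ as the tensor product of the twisted complexes of the two factors, and then to invoke the algebraic K\"unneth theorem over $\CO$; because all the twisted cohomology groups in sight are finite dimensional --- Hodge theory for the elliptic twisted Laplacian, with absolute or relative boundary conditions on $X_1$ --- no $\mathrm{Tor}$ terms occur and one obtains exactly the stated algebraic tensor product.

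First I would fix a Riemannian metric on $X_1\times X_2$ that is of product type near the boundary --- legitimate, since the twisted Betti numbers are independent of the metric --- and record that $\partial(X_1\times X_2)=\partial X_1\times X_2$, with collar $\bigl(\partial X_1\times(-\epsilon,0]\bigr)\times X_2$ and normal coordinate $r$ inherited from $X_1$; this is the only point where the hypothesis that $X_2$ be closed is used. Writing a form near the boundary as $\omega=\omega_0+\omega_1\wedge dr$, a decomposable form $\pi_1^*\alpha\wedge\pi_2^*\beta$ with $\alpha=\alpha_0+\alpha_1\wedge dr$ has $\omega_0=\pi_1^*\alpha_0\wedge\pi_2^*\beta$ and $\omega_1=\pm\,\pi_1^*\alpha_1\wedge\pi_2^*\beta$, so the relative boundary condition $\omega_0=0$ (resp.\ the absolute one $\omega_1=0$) holds on $X_1\times X_2$ precisely when $\alpha$ satisfies the relative (resp.\ absolute) boundary condition on $X_1$. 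Hence the canonical isomorphism $\Omega^{\bar\bullet}(X_1\times X_2,\E_1\boxtimes\E_2)\cong\Omega^{\bar\bullet}(X_1,\E_1)\,\widehat\otimes\,\Omega^{\bar\bullet}(X_2,\E_2)$ restricts to isomorphisms of the absolute and of the relative $\ZA/2$-graded complexes, and a short Koszul-sign computation --- using that $H_2$ has odd degree --- identifies the twisted differential on the product with
\[
\nabla^{\E_1\boxtimes\E_2,\,H_1\boxplus H_2}\;=\;\nabla^{\E_1,H_1}\,\widehat\otimes\,1\;+\;(-1)^{\deg}\,\widehat\otimes\,\nabla^{\E_2,\overline H_2}.
\]
Thus the product complex is precisely the tensor product of $\bigl(\Omega^{\bar\bullet}(X_1,\E_1)_r,\nabla^{\E_1,H_1}\bigr)$ --- respectively with absolute boundary conditions --- and $\bigl(\Omega^{\bar\bullet}(X_2,\E_2),\nabla^{\E_2,\overline H_2}\bigr)$; the conjugate on $H_2$ in $H_1\boxplus H_2$ is exactly what this forces, matching the conjugation already present in the cup product introduced before Proposition~\ref{prop:pd}.

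Next I would replace the second factor by a finite dimensional model. On the closed manifold $X_2$, Hodge theory for the twisted Laplacian $\bigl(\nabla^{\E_2,\overline H_2}+(\nabla^{\E_2,\overline H_2})^\dagger\bigr)^2$ shows that the harmonic forms $\bigl(\maH^{\bar\bullet}(X_2,\E_2,\overline H_2),0\bigr)$ constitute a finite dimensional subcomplex, with zero differential, that is a deformation retract of $\bigl(\Omega^{\bar\bullet}(X_2,\E_2),\nabla^{\E_2,\overline H_2}\bigr)$, the retraction and chain homotopy being built from the Green operator. Tensoring the retraction data with $\Omega^{\bar\bullet}(X_1,\E_1)_r$ --- now a genuine algebraic tensor product, the $X_2$-factor being finite dimensional --- exhibits $\Omega^{\bar\bullet}(X_1,\E_1)_r\otimes\maH^{\bar\bullet}(X_2,\E_2,\overline H_2)$ as a deformation retract of the product complex, whence, in both the absolute and the relative versions,
\[
H^{\bar k}(X_1\times X_2,\E_1\boxtimes\E_2,H_1\boxplus H_2)\;\cong\;\bigoplus_{l=0,1}H^{\bar l}(X_1,\E_1,H_1)\otimes H^{\overline{k-l}}(X_2,\E_2,\overline H_2),
\]
the isomorphism being induced by $[\alpha]\otimes[\beta]\mapsto[\pi_1^*\alpha\wedge\pi_2^*\beta]$. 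Finally I would absorb the conjugate on the right-hand $X_2$-factor: the conjugation $\omega\mapsto\bar\omega$ on $X_2$, together with the flat identification $\overline{\E_2}\cong\E_2^*$ from the Hermitian metric and Poincar\'e duality on $X_2$ --- all vacuous when $\E_2$ carries a real structure --- recasts the isomorphism in the stated form, with $H^{\overline{k-l}}(X_2,\E_2,H_2)$ on the right and K\"unneth map $[\alpha]\otimes[\beta]\mapsto[\pi_1^*\alpha\wedge\pi_2^*\overline\beta]$.

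The main obstacle, and essentially the only point not already contained in the closed-manifold case of \cite{MW,MW2}, is the first step: checking that the collar and the relative and absolute boundary conditions of $X_1\times X_2$ genuinely factor through the $X_1$-variable, so that one gets the tensor product of the \emph{boundary} complexes rather than of the ambient de Rham complexes, and keeping the completed topological tensor product under control. Both are taken care of once the $X_2$-factor is known to retract onto its finite dimensional harmonic subcomplex; after that only the formal K\"unneth theorem over $\CO$ and the conjugation bookkeeping of the last step remain, and these go through just as in the boundaryless situation.
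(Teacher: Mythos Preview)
The paper does not actually prove this proposition: at the start of \S1.3 the authors explicitly omit all proofs in that subsection, declaring them ``straightforward generalizations of the corresponding statements in the case of manifolds without boundary \cite{MW,MW2} and the absolute and relative cohomology for manifold with boundary, cf.~\cite{GilkeyBook,Schwarz}.'' Your sketch therefore fills in an omitted argument rather than competing with an existing one, and its core --- identifying the product complex as a (completed) tensor product, checking that the absolute/relative conditions on $\partial X_1\times X_2$ are imposed purely in the $X_1$-normal direction, retracting the closed factor $X_2$ onto its finite-dimensional twisted harmonic subcomplex via the Hodge--Green homotopy, and then invoking the algebraic K\"unneth formula over $\CO$ --- is exactly the intended route and is correct.

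The one step that does not work as written is your final ``absorb the conjugate'' move. Complex conjugation carries $H^{\bar m}(X_2,\E_2,\overline{H_2})$ to the conjugate of $H^{\bar m}(X_2,\overline{\E_2},H_2)$, and a hermitian metric identifies $\overline{\E_2}$ with $\E_2^*$; but Poincar\'e duality then pairs $H^{\bar m}(X_2,\E_2^*,H_2)$ with $H^{\overline{n_2-m}}(X_2,\E_2,-\overline{H_2})$, which has the wrong parity and the wrong flux, so it does not return you to $H^{\bar m}(X_2,\E_2,H_2)$. What your argument honestly produces is the K\"unneth isomorphism with $H^{\overline{k-l}}(X_2,\E_2,\overline{H_2})$ on the right, induced by the bilinear map $[\alpha]\otimes[\beta]\mapsto[\pi_1^*\alpha\wedge\pi_2^*\beta]$. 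Matching the paper's stated form then requires either reading the $\overline{H_2}$ in the definition of $H_1\boxplus H_2$ as a misprint for $H_2$, or --- in line with the sesquilinear cup product and with the applications in \S\ref{sect:func}, where the bundles are flat hermitian --- assuming a real or hermitian structure on $\E_2$ that identifies $\E_2\cong\overline{\E_2}$ directly; in either case the repair is a one-line identification, not a Poincar\'e-duality argument.
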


\subsection{Review of twisted Hodge theory}

Let $\cH^{\bar k}(X,\E,H)$ denote the nullspace of $\Delta^\E_H$ with absolute boundary conditions,
and $\cH^{\bar k}(X, \partial X, \E,H)$ denote the nullspace of $\Delta^\E_H$ with relative boundary conditions.
Then as a special case of Hodge theory of elliptic boundary value problems for manifolds with boundary, cf. \cite{GilkeyBook, Schwarz},

\begin{proposition}[Hodge isomorphism]\label{prop:hodge}
Let $X$ be an oriented {{(connected)}} compact manifold with boundary of dimension $n$.
Let $\E$ be a flat hermitian vector bundle on $X$. 
Suppose $H$ is a closed odd-degree differential form on $X$ which is boundary compatible.
Then, for $k=0,1$, the inclusion map induces isomorphisms
$$
\cH^{\bar k}(X,\E,H)  \cong  H^{\bar k}(X,\E,H), \qquad
\cH^{\bar k}(X, \partial X, \E,H)  \cong H^{\bar k}(X, \partial X, \E,H).
$$
\end{proposition}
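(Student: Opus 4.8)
The plan is to realize both isomorphisms as instances of the Hodge theory of self-adjoint elliptic boundary value problems, in exactly the way that the untwisted absolute and relative de Rham--Hodge theorems are proved \cite{GilkeyBook,Schwarz}; the only new ingredient is that the flux term is a zeroth-order perturbation which, thanks to the boundary compatibility of $H$, does not disturb the boundary conditions.

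First I would fix the analytic setup. Put $D^\E_H=\nabla^{\E,H}+(\nabla^{\E,H})^\dagger$ and $\Delta^\E_H=(D^\E_H)^2=\nabla^{\E,H}(\nabla^{\E,H})^\dagger+(\nabla^{\E,H})^\dagger\nabla^{\E,H}$, using that $(\nabla^{\E,H})^2=0$ since $H$ is closed and $H\wedge H=0$. Because $H\wedge\cdot$ and its adjoint are order-zero bundle endomorphisms, $D^\E_H$ has the same principal symbol as the untwisted operator $\nabla^\E+(\nabla^\E)^\dagger$, so it is formally self-adjoint of Dirac type and $\Delta^\E_H=\Delta^\E+(\text{first order})$ is of Laplace type. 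Next I would pin down the boundary conditions. In the relative case $\Omega^{\bar k}(Y,\E)_r=\{\omega: i^*\omega=0\}$ is a genuine $\ZA/2$-graded subcomplex for $\nabla^{\E,H}$, since $i^*$ intertwines $\nabla^\E$ with $\nabla^{i^*\E}$ and $i^*(H\wedge\omega)=(i^*H)\wedge i^*\omega$; the boundary value problem for $\Delta^\E_H$ then imposes $B_r\omega=0$ and $B_r\big((\nabla^{\E,H})^\dagger\omega\big)=0$. In the absolute case I would invoke the boundary compatibility $i^*(\star H)=0$, i.e.\ $\iota_\nu H|_{\partial Y}=0$: on forms with $B_a\omega=\iota_\nu\omega=0$ one gets $\iota_\nu(\nabla^{\E,H}\omega)=\iota_\nu(\nabla^\E\omega)$, so the derived ``Neumann-type'' condition $B_a\big(\nabla^{\E,H}\omega\big)=0$ reduces to the classical one and the boundary value problem for $\Delta^\E_H$ has the same symbolic form as the untwisted absolute problem.

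Given this, the Shapiro--Lopatinski ellipticity of the pairs $(\Delta^\E_H,\text{rel.\ b.c.})$ and $(\Delta^\E_H,\text{abs.\ b.c.})$ is a purely symbolic condition, hence inherited from the untwisted case \cite{GilkeyBook}; each is therefore a self-adjoint Fredholm elliptic boundary value problem with discrete real spectrum and finite-dimensional nullspaces, namely $\cH^{\bar k}(Y,\partial Y,\E,H)$ and $\cH^{\bar k}(Y,\E,H)$. The crucial computation is Green's formula
\[
(\Delta^\E_H\omega,\omega)=\|\nabla^{\E,H}\omega\|^2+\|(\nabla^{\E,H})^\dagger\omega\|^2+\int_{\partial Y}(\cdots),
\]
whose boundary integral vanishes for $\omega$ satisfying the relative, respectively absolute, conditions --- and it is precisely the boundary compatibility of $H$ that removes the flux contribution to this boundary term in the absolute case. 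Hence on the relevant domain $\ker\Delta^\E_H=\ker\nabla^{\E,H}\cap\ker(\nabla^{\E,H})^\dagger$, and, with elliptic regularity and closedness of ranges, one obtains the $\ZA/2$-graded orthogonal Hodge--Kodaira decomposition
\[
\Omega^{\bar k}(Y,\E)=\cH^{\bar k}\ \oplus\ \im\big(\nabla^{\E,H}\big)\ \oplus\ \im\big((\nabla^{\E,H})^\dagger\big)
\]
for the absolute and for the relative problem. Projecting a $\nabla^{\E,H}$-closed representative onto its harmonic component, and observing that a harmonic form which is $\nabla^{\E,H}$-exact is zero, shows that the map induced by inclusion of harmonic forms, $\cH^{\bar k}\to H^{\bar k}$, is the asserted isomorphism, in both versions.

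The one step that needs real care is the second: checking that the absolute boundary conditions adapted to $\nabla^{\E,H}$ genuinely form a self-adjoint elliptic boundary value problem whose harmonic space sits inside the complex computing $H^{\bar k}(Y,\E,H)$. Once $i^*(\star H)=0$ has been used to delete the flux contributions from the derived boundary condition and from the Green's-formula boundary term, the rest is the standard elliptic boundary value problem machinery of \cite{GilkeyBook,Schwarz} applied essentially verbatim, all symbol-level data agreeing with the untwisted de Rham/signature situation.
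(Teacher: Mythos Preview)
Your proposal is correct and follows exactly the route the paper indicates: the paper does not give a proof of this proposition at all, but simply states it ``as a special case of Hodge theory of elliptic boundary value problems for manifolds with boundary, cf.\ \cite{GilkeyBook,Schwarz}.'' Your argument is precisely a careful unpacking of that citation --- the zeroth-order nature of the flux perturbation, the preservation of the Shapiro--Lopatinski condition, Green's formula, and the resulting $\ZA/2$-graded Hodge decomposition --- so there is nothing to contrast.
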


\subsection{Scale invariance} 

{{Let again $X$ be a smooth compact manifold  with boundary $\partial X$}}. 

\begin{proposition}[Scale invariance]\label{scaling}
Let $\E$ be a flat vector bundle over {{$X$}} and $H$, an odd-degree closed
form on ${{X}}$ {{which satisfies the absolute boundary condition}}.
Suppose $H=\sum_{i\ge1}H_{2i+1}$, where each $H_{2i+1}$ is a $(2i+1)$-form.
For any $\lambda\in\CO\smallsetminus \{0\}$, let $H^{(\lambda)}=\sum_{i\ge1}\lambda^i H_{2i+1}$.
Then $H^\bullet(X,\E,H)\cong H^\bullet(X,\E,H^{(\lambda)})$
and  $H^\bullet(X, \partial X,\E,H)\cong H^\bullet(X,\partial X, \E,H^{(\lambda)})$.
\end{proposition}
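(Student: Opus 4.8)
The plan is to adapt the rescaling trick used in the proof of Lemma~\ref{lem:fluxindep}, replacing the gauge transformation $\varepsilon_B = e^B\wedge\,\cdot\,$ by a degree-dependent rescaling operator. Concretely, for $\lambda\in\CO\smallsetminus\{0\}$ fix a branch and write $\mu=\lambda^{1/2}$ (any choice will do); define the $\CO$-linear operator $R_\mu\colon\Omega^\bullet(X,\E)\to\Omega^\bullet(X,\E)$ which acts on $\Omega^j(X,\E)$ by multiplication by $\mu^{\,j}$. First I would record the elementary identity $R_\mu\circ(d\wedge\cdot)\circ R_\mu^{-1}=\mu\,(d\wedge\cdot)$ is \emph{not} quite what we want; rather, since $d$ raises degree by one, $R_\mu\, d\, R_\mu^{-1}=\mu\, d$, and since $H_{2i+1}\wedge$ raises degree by $2i+1$, $R_\mu\,(H_{2i+1}\wedge\,\cdot\,)\,R_\mu^{-1}=\mu^{2i+1}\,H_{2i+1}\wedge\,\cdot\,$. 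Normalizing by dividing the whole differential by $\mu$, one gets
\[
  \mu^{-1}\,R_\mu\circ\nabla^{\E,H}\circ R_\mu^{-1}
  \;=\;\nabla^\E+\sum_{i\ge1}\mu^{2i}\,H_{2i+1}\wedge\,\cdot\,
  \;=\;\nabla^{\E,H^{(\lambda)}},
\]
using $\mu^{2}=\lambda$ and that, by the hypothesis of the proposition, $H$ has no $1$-form component so the $\nabla^\E$ term is untouched by the rescaling (this is exactly the point at which the ``degree at least $3$'' condition enters, as already flagged in the subsection heading). The overall scalar factor $\mu^{-1}$ does not affect kernels or images, so $R_\mu$ is a chain isomorphism from $(\Omega^\bullet(X,\E),\nabla^{\E,H})$ to $(\Omega^\bullet(X,\E),\nabla^{\E,H^{(\lambda)}})$.

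Next I would check that $R_\mu$ respects the boundary conditions. Because $R_\mu$ acts on a form $\omega=\omega_0+\omega_1\,dr$ near the collar $\partial X\times(-\epsilon,0]$ by rescaling $\omega_0$ and $\omega_1$ by (nonzero) powers of $\mu$, it carries forms satisfying $B_a(\omega)=\omega_1=0$ to forms of the same type, and likewise for the relative condition $B_r(\omega)=\omega_0=0$; here one also uses that $H^{(\lambda)}$ still satisfies the absolute boundary condition, which is immediate since each $H_{2i+1}$ does and $R_\mu$ only rescales it componentwise. Hence $R_\mu$ restricts to isomorphisms of the subcomplexes $\Omega^{\bar k}(X,\E)_a$ and $\Omega^{\bar k}(X,\E)_r$ intertwining $\nabla^{\E,H}$ with $\nabla^{\E,H^{(\lambda)}}$, and therefore descends to isomorphisms on both the absolute and the relative twisted de Rham cohomology, which is the assertion.

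I expect the only genuinely delicate point to be bookkeeping the $\ZA/2$-grading: the operator $R_\mu$ does \emph{not} preserve the even/odd splitting trivially as stated, but it does preserve parity (it rescales $\Omega^j$ by a scalar, hence maps $\Omega\even\to\Omega\even$ and $\Omega\odd\to\Omega\odd$), so the induced maps are graded of degree $0$ as required; I would state this explicitly to avoid confusion with the fact that $\nabla^{\E,H}$ itself is only $\ZA/2$-graded. A secondary subtlety is the multivaluedness of $\mu=\lambda^{1/2}$: different choices of the square root differ by an overall sign on the odd part, which is again a scalar chain automorphism, so the resulting cohomology isomorphism is independent of the choice up to this harmless ambiguity; since the statement only claims abstract isomorphism, nothing more is needed. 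Everything else is the same formal argument as Lemma~\ref{lem:fluxindep}.
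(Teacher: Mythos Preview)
Your proof is correct and takes essentially the same approach as the paper: a degree-dependent rescaling operator that conjugates $\nabla^{\E,H}$ to a nonzero scalar multiple of $\nabla^{\E,H^{(\lambda)}}$, hence a chain isomorphism compatible with both boundary conditions. The paper's operator is $c_\lambda=\lambda^{[j/2]}$ on $j$-forms rather than your $R_\mu=\mu^{\,j}$, which sidesteps the square-root ambiguity you noted (the intertwining scalar then being $\lambda^{k}$ on $\Omega^{\bar k}$ instead of a uniform $\mu$), but otherwise the two arguments are identical.
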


\begin{proof}
For any $\lambda \in \CO$, let $c_\lambda$ act on $\Omega^\bullet({{X}},\E)$ 
with either relative or absolute boundary conditions, by 
multiplying $\lambda^{[\frac{i}{2}]}$ on $i$-forms.
Then $H^{(\lambda)}=c_\lambda(H)$ and
$c_\lambda\circ\nabla^{\E,H}=\lambda^k\;\nabla^{\E,H^{(\lambda)}}\circ
c_\lambda$ on $\Omega^{\bar k}({{X}},\E)$ for $k=0,1$.
If $\lambda\ne0$, then $c_\lambda$ induces the desired isomorphism on
twisted {{absolute and relative}} cohomology groups respectively.
\end{proof}

{{
\begin{corollary}
For any $\lambda$ in the unit circle, and denoting by $\Delta^{\E}_H$ the Laplacian operator associated with the odd degree closed form $H$, the following relation holds on relative forms as well as on absolute forms:
$$
c_\lambda\circ \Delta^\E_H \circ c_{\bar\lambda} = \Delta^\E_{H^{(\lambda)}}, \text{ where } c_\lambda = \lambda^{[i/2]} \text{ on $i$-forms}.
$$
\end{corollary}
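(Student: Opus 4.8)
The plan is to derive the stated conjugation identity for the Laplacian as a direct consequence of the intertwining relation established in the proof of Proposition \ref{scaling}, namely $c_\lambda\circ\nabla^{\E,H}=\lambda^k\,\nabla^{\E,H^{(\lambda)}}\circ c_\lambda$ on $\Omega^{\bar k}(X,\E)$. First I would record the companion relation for the formal adjoint. Since $\lambda$ lies in the unit circle, $c_\lambda$ is a \emph{unitary} operator on each $\Omega^i(X,\E)$ (it multiplies by the unit scalar $\lambda^{[i/2]}$), so $c_\lambda^* = c_\lambda^{-1} = c_{\bar\lambda}$; taking adjoints of the intertwining relation and using $\overline{\lambda^k}=\bar\lambda^{\,k}$ gives the dual statement $(\nabla^{\E,H})^\dagger\circ c_{\bar\lambda} = \bar\lambda^{\,k}\, c_{\bar\lambda}\circ(\nabla^{\E,H^{(\lambda)}})^\dagger$, equivalently $c_\lambda\circ(\nabla^{\E,H})^\dagger = \lambda^{k}\,(\nabla^{\E,H^{(\lambda)}})^\dagger\circ c_\lambda$ when one is careful to track which graded component the operators act on. The only mildly delicate point is bookkeeping of the exponent: $\nabla^{\E,H}$ maps $\Omega^{\bar k}$ to $\Omega^{\overline{k+1}}$ and its adjoint maps back, so the scalar factors on the two summands of $\Delta^\E_H=\nabla^{\E,H}(\nabla^{\E,H})^\dagger+(\nabla^{\E,H})^\dagger\nabla^{\E,H}$ must be checked to combine consistently.

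Next I would assemble the two pieces. Writing $d_\lambda := \nabla^{\E,H^{(\lambda)}}$ and $d := \nabla^{\E,H}$ for brevity, on $\Omega^{\bar k}$ one has $c_\lambda d\, c_{\bar\lambda}$ applied after first moving to the appropriate component: $c_\lambda \circ d \circ c_{\bar\lambda} = c_\lambda\circ(\lambda^{-k}\, c_{\bar\lambda}\, d_\lambda) = \lambda^{-k}\,d_\lambda$ is not quite the identity one wants, which signals that the clean statement is really about the Laplacian, where the $\lambda$ and $\bar\lambda$ contributions cancel. Concretely, compute $c_\lambda \circ \Delta^\E_H \circ c_{\bar\lambda}$ on each homogeneous piece by inserting $c_{\bar\lambda}c_\lambda = \mathrm{id}$ between every pair of factors in $dd^\dagger + d^\dagger d$, replacing each $c_\lambda d c_{\bar\lambda}$ (resp.\ $c_\lambda d^\dagger c_{\bar\lambda}$) by its value in terms of $d_\lambda$, $d_\lambda^\dagger$ and powers of $\lambda$; the powers $\lambda^{k}$ from the $d$-type factor and $\bar\lambda^{\,k}$ from the adjacent $d^\dagger$-type factor are complex conjugates of equal modulus one, hence multiply to $1$, leaving exactly $d_\lambda d_\lambda^\dagger + d_\lambda^\dagger d_\lambda = \Delta^\E_{H^{(\lambda)}}$. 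This must be done separately for the two orderings $dd^\dagger$ and $d^\dagger d$ and for $k=0,1$, but it is purely formal once the adjoint intertwining relation is in hand.

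Finally I would note that the argument is insensitive to the boundary conditions: $c_\lambda$ preserves both $\Omega^{\bar k}(X,\E)_a$ and $\Omega^{\bar k}(X,\E)_r$ since it acts diagonally in the form-degree and therefore commutes with the restriction map $i^*$ and with $\star$ up to a nonzero scalar, so it preserves the boundary conditions $B_a(\omega)=i^*(\star\omega)$ and $B_r(\omega)=i^*\omega$ defining the domains of $\Delta^\E_H$ with absolute, resp.\ relative, boundary conditions. (One should remark that $c_\lambda$ intertwines $\star$ with a scalar multiple of $\star$, which is enough, since a nonzero scalar does not affect whether a boundary value vanishes.) Hence the same computation proves the identity in both cases.

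The main obstacle I anticipate is not conceptual but notational: keeping the grading-dependent scalar factors $\lambda^{[i/2]}$, $\lambda^k$, $\bar\lambda^{\,k}$ straight through the passage to adjoints and through the two terms of the Laplacian, and in particular confirming that the mismatched powers genuinely cancel rather than accumulate. The unitarity of $c_\lambda$ for $|\lambda|=1$ — which is exactly why the statement is restricted to the unit circle — is the structural fact that makes this cancellation work, and I would make sure that point is stated explicitly.
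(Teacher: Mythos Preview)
Your proposal is correct and follows essentially the same route as the paper: both arguments reduce to the intertwining relation $c_\lambda\circ\nabla^{\E,H}=\lambda^k\,\nabla^{\E,H^{(\lambda)}}\circ c_\lambda$ from Proposition~\ref{scaling}, extend it to the adjoint, and then combine the two to conjugate the Laplacian. The only stylistic difference is that the paper first derives the explicit formula $(\nabla^{\E,H})^\dagger=-\star_\E\nabla^{\E,-\overline H}\star_\E$ via the Green formula (which is also where the relative/absolute boundary conditions enter) and then leaves the remaining verification as a ``straightforward inspection,'' whereas you bypass the Hodge-star formula entirely by invoking unitarity of $c_\lambda$ for $|\lambda|=1$ to take adjoints directly; both lead to the same cancellation of the scalar factors $\lambda^k\bar\lambda^k$.
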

}}

\begin{proof}
{{Recall that the hermitian scalar product of forms is given by 
$$
<\alpha, \beta> := \int_Y \alpha \wedge \star_\E\bar\beta.
$$}}
{{We have for any smooth forms $\omega, \omega'$ such that $(-1)^{deg \omega + deg \omega'} = -1$,
$$
<\nabla^{\E, H} \omega, \omega'> = -<\omega , \star_\E \nabla^{\E, -\overline H} \star_\E \omega' > + \int_{\partial Y} i^* (\omega \wedge \star_\E {\bar\omega'}),
$$
where $i:{{\partial X \hookrightarrow X}}$ in the inclusion of the boundary. If $\omega$ is a relative form then $i^* (\omega)=0$. If $\omega'$ is an absolute form then 
$ i^* (\star {\bar\omega'}) = 0$. Hence the adjoint ${\nabla^{\E, H}}^\dagger$ of $\nabla^{\E, H} $ on relative as well as on absolute forms is given by
$$
{\nabla^{\E, H} }^\dagger = - \star_\E\nabla^{\E, -\overline H} \star_\E.
$$
Thus, $\Delta_{\E,H} = - \star_\E \nabla^{\E, -\overline H} \star_\E  \nabla^{\E, H} - \nabla^{\E, H}\star_\E \nabla^{\E, -\overline H} \star_\E$. A straightforward inspection using Proposition \ref{scaling} then ends the proof.
}}

\end{proof}

\section{Twisted signature and the twisted signature complex}

Assume that $X$ is a closed oriented manifold of dimension $n=2m$ and that $H$ is an odd degree closed differential form on $X$. 
When $-\overline H = H$, i.e. when $H$ is pure imaginary, Poincare duality asserts that 
there is a nondegenerate sesquilinear pairing,
\[   
H^{\bar k}(X,\E,H) \otimes H^{\overline{n-k}}(X,\E^*, H) \longrightarrow \CO.   
\]
Since $\E$ is hermitian, there is a natural isomorphism $\E \cong \E^*$ given by 
the hermitian metric, so that the above becomes a sesquilinear form
\[   
{{B_0:}} H^{\bar k}(X,\E,H) \otimes H^{\overline{n-k}}(X,\E, H) \longrightarrow \CO.   
\]
{{In restriction to even forms we set $B=B_0$ while in restriction to odd forms we take $B=i B_0$ where $i=\sqrt{-1}$. In this way, we end up with a hermitian 
sesquilinear form $B$ on all differential forms,
which is a direct sum of the two hermitian forms. }}

{{
\begin{definition}\label{def:signature}
Let $H$ be any odd degree closed pure imaginary differential form on $X$. Then the  twisted signature ${\rm Sign}(X, \E, H)$ is by definition the signature of 
the hermitian  form $B$ on $H^*(X,\E,H)$.
\end{definition}
 }}

{{
\begin{corollary}
Let $H$ be as above a purely imaginary odd degree closed differential form on $X$. Then we define for any $\lambda \in \ST^1$, a sesquilinear hermitian form 
$B^\lambda$  by setting $B^\lambda = B_0^\lambda$ on even forms and $B^\lambda= i B_0^\lambda$ 
on odd forms, where for homogeneous forms
$$
B_0^\lambda (\omega, \omega'):= \lambda^{ m - deg (\omega)} \int_X \omega \wedge {\overline{\omega'}}.
$$
The form $B^\lambda$ induces a nondegenerate hermitian form  
on the twisted cohomology space $H^\bullet (X, \E, H^{{{(}}\lambda)})$ whose signature coincides with ${\rm Sign}(X, \E, H)$.
\end{corollary}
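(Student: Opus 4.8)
The plan is to recognize $B^\lambda$ as the transport of the hermitian form $B$ under the scale-invariance isomorphism $c_\lambda$ of Proposition~\ref{scaling}, and then to invoke invariance of the signature under congruence of hermitian forms. Recall from the proof of Proposition~\ref{scaling} that $c_\lambda$ acts on $\Omega^i(X,\E)$ by multiplication by $\lambda^{[i/2]}$, that it satisfies $c_\lambda\circ\nabla^{\E,H}=\lambda^{k}\,\nabla^{\E,H^{(\lambda)}}\circ c_\lambda$ on $\Omega^{\bar k}(X,\E)$, and hence (as $\lambda\neq 0$) descends to an isomorphism $H^\bullet(X,\E,H)\to H^\bullet(X,\E,H^{(\lambda)})$ on twisted cohomology; since $\lambda\in\ST^1$ one has $\overline{\lambda}=\lambda^{-1}$ and $c_\lambda^{-1}=c_{\bar\lambda}$.

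First I would carry out the chain-level computation identifying the forms. For homogeneous $\omega,\omega'$ with $\deg\omega=p$ and $\deg\omega'=q$, the integral $\int_X\omega\wedge\overline{\omega'}$ vanishes unless $p+q=2m$, so one may take $q=2m-p$; then, using $\overline{\lambda}=\lambda^{-1}$,
\[
B_0^\lambda(c_\lambda\omega,c_\lambda\omega')
=\lambda^{m-p}\,\lambda^{[p/2]}\,\overline{\lambda^{[q/2]}}\int_X\omega\wedge\overline{\omega'}
=\lambda^{\,m-p+[p/2]-[(2m-p)/2]}\int_X\omega\wedge\overline{\omega'}.
\]
Writing $p=2a$ or $p=2a+1$, a one-line check shows that the exponent $m-p+[p/2]-[(2m-p)/2]$ equals $0$ in either parity, so $B_0^\lambda(c_\lambda\omega,c_\lambda\omega')=B_0(\omega,\omega')$. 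Since $c_\lambda$ preserves the parity of the degree, the extra factor $i$ placed on odd forms in the definitions of $B$ and of $B^\lambda$ matches up, giving $B^\lambda(c_\lambda\omega,c_\lambda\omega')=B(\omega,\omega')$ for all forms; equivalently $B^\lambda=(c_\lambda^{-1})^{*}B$ already at the level of differential forms.

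It then remains only to pass to cohomology. Since $B$ is a well-defined nondegenerate hermitian form on $H^\bullet(X,\E,H)$ by Poincar\'e duality and Definition~\ref{def:signature}, and $c_\lambda$ is a $\CO$-linear isomorphism of the twisted cohomology spaces, the relation $B^\lambda=(c_\lambda^{-1})^{*}B$ shows that $B^\lambda$ descends to a well-defined nondegenerate hermitian form on $H^\bullet(X,\E,H^{(\lambda)})$ which is congruent to $B$; hence ${\rm Sign}(B^\lambda)={\rm Sign}(B)={\rm Sign}(X,\E,H)$. The feature worth emphasizing is that $H^{(\lambda)}$ is in general not purely imaginary, so Poincar\'e duality by itself does not put a hermitian pairing on $H^\bullet(X,\E,H^{(\lambda)})$; what makes the statement work is precisely the degree-dependent weight $\lambda^{m-\deg}$ in $B_0^\lambda$, tuned exactly so that $c_\lambda$ becomes an isometry. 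Accordingly I do not expect a real obstacle here: the only things to get right are the vanishing of the exponent $m-p+[p/2]-[(2m-p)/2]$ — a short case analysis on floor functions — and the harmless observation that $c_\lambda$, acting by degree-dependent scalars, commutes with the hermitian structure of $\E$.
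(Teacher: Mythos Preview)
Your proposal is correct and follows essentially the same route as the paper: both establish the chain-level identity $B^\lambda(\omega,\omega')=B(c_{\bar\lambda}\omega,c_{\bar\lambda}\omega')$ (which is your $B^\lambda=(c_\lambda^{-1})^*B$), then use that $c_\lambda$ induces an isomorphism of twisted cohomologies to transport well-definedness, nondegeneracy, and signature from $B$ to $B^\lambda$. Your explicit verification of the exponent $m-p+[p/2]-[(2m-p)/2]=0$ is a welcome detail the paper leaves implicit.
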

}}

\begin{proof}
That $B_0^\lambda$ is hermitian on even forms and skew-hermitian on odd forms is straightforward. It is also obvious from the definition that 
$$
B^\lambda (\omega, \omega') = B(c_{\bar\lambda} (\omega), c_{\bar\lambda} (\omega')) \text{ where } c_\mu (\alpha)=\mu^{[i/2]} \alpha \text{ for } \mu\in \C^*, deg (\alpha)=i.
$$
Therefore, if $\nabla^{\E, H^{(\lambda)}} \omega = 0$ and $\nabla^{\E, H^{(\lambda)}}\beta =\omega'$ then
$$
 \nabla^{\E, H} c_{\bar\lambda} \omega = 0 \text{ and } \nabla^{\E, H}({\bar\lambda}^k c_{\bar\lambda}\beta) = c_{\bar\lambda} \omega', k=0 \text{ or } 1.
$$
Now since $B$ descends to the cohomology of the differential $\nabla^{\E, H}$ we deduce that 
$$
B^\lambda (\omega, \omega') = 0.
$$
The non degeneracy of $B^\lambda$ is also deduced form the non degeneracy of $B$, which itself is a consequence of Poincar\'e duality as already mentioned. Moreover, the signature 
of $B^\lambda$ on $H^\bullet (X, \E, H^{\lambda)})$ coincides with that {{of}} $B$ on $H^\bullet (X, \E, H)$.
\end{proof}

\begin{remark}
 Notice that for purely imaginary $H$, and for $\lambda\in \ST^1$, we can write:
$$
- {\overline{H^{(\lambda)}}} = H^{(\bar\lambda)}.
$$
\end{remark}

In particular, 
we shall be concerned with the case $\lambda = i=\sqrt{-1}$  and we quote that in this case $H^\bullet(X,\E,H)\cong H^\bullet(X,\E,H^{(i)})$ and when $H$ is purely imaginary, 
the hermitian sesquilinear form $B^i$ is given 
in this case by
$$
B^i (\omega, \omega'):= (-1)^{[p/2]+m} i^m \times  \int_X \omega \wedge {\overline{\omega'}}, \omega \in \Omega^p (X, \E).
$$

{{
Consider the signature operator $B^\E_H = \nabla^{\E, H} + {\nabla^{\E, H}}^\dagger$ 
where $\nabla^{\E, H}=\nabla^\E+H\wedge\,\cdot\,$, 
$\nabla^\E$ is the  flat hermitian connection on $\E$, 
$ {\nabla^{\E, H}}^\dagger$ denotes its adjoint, 
then we have,}}

\begin{proposition}
In the notation above,
$$
B^\E_H \tau = - \tau B^\E_H
$$
if and only if $H = \sum i^{j+1} H_{2j+1} $ is an 
odd-degree closed differential form on $X$ and $H_{2j+1}$ is a real-valued differential form 
of degree ${2j+1}$. 
\end{proposition}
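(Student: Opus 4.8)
The plan is to split off the flux-free part of $B^\E_H$, reduce the anticommutation to a fibrewise (zeroth order) identity on the exterior algebra, and then unravel it degree by degree in $H$.

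First I would write $B^\E_H = B^\E_0 + F$, where $B^\E_0 = \nabla^\E + (\nabla^\E)^\dagger$ is the ordinary signature operator (no flux) and $F = e(H) + e(H)^\dagger$ with $e(H) = H\wedge\,\cdot\,$. Since $\E$ is flat hermitian with $\nabla^\E$ its canonical (unitary) flat connection, the classical identity $B^\E_0\,\tau = -\,\tau\,B^\E_0$ holds exactly as in the untwisted case (locally $\nabla^\E = d$ in a parallel orthonormal frame). Hence $B^\E_H\,\tau = -\,\tau\,B^\E_H$ is equivalent to $\tau F + F\tau = 0$, and, since $\tau^2 = \mathrm{id}$, to $\tau F\tau = -F$. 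Because $F$ is a bundle endomorphism of $\Lambda^\bullet T^*X\otimes\E$ which is the identity on the $\E$-factor, this is a pointwise statement on $\Lambda^\bullet T^*_xX$, and one may argue in a fixed $2m$-dimensional Euclidean space.

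Next I would compute $\tau F\tau$. As conjugation by the hermitian involution $\tau$ commutes with taking adjoints, $\tau F\tau = \tau e(H)\tau + \bigl(\tau e(H)\tau\bigr)^\dagger$, so it suffices to identify $\tau e(H)\tau$. Using the explicit signature involution $\tau = i^{p(p-1)+m}\star$ on $p$-forms, the adjunction between exterior and interior multiplication, and $\star^2 = (-1)^p$ on $p$-forms, one checks that for a homogeneous component $\omega$ of $H$ of degree $k = 2j+1$,
\[
\tau\, e(\omega)\, \tau \;=\; i^{\,k(k-1)}\; e(\overline{\omega})^\dagger \;=\; (-1)^{j}\, e(\overline{\omega})^\dagger .
\]
The crux of the argument is exactly this computation: one must see that all the powers of $i$ and signs produced by the two $\star$'s and by the degree shifts cancel the dependence on the degree $p$ of the form acted on, leaving only the coefficient $i^{k(k-1)} = (-1)^j$. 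Equivalently one can read the identity off the adjoint formula $(\nabla^{\E,H})^\dagger = -\star\,\nabla^{\E,-\overline H}\,\star$ established earlier, which already singles out $\overline H$; this phase bookkeeping is where I expect the real work to lie.

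Taking adjoints gives $\bigl(\tau e(\omega)\tau\bigr)^\dagger = (-1)^j e(\overline{\omega})$, so, writing $H = \sum_{j\ge1}\omega_{2j+1}$ with $\deg\omega_{2j+1} = 2j+1$,
\[
\tau F\tau \;=\; \sum_{j\ge1}(-1)^{j}\Bigl(e(\overline{\omega_{2j+1}}) + e(\overline{\omega_{2j+1}})^\dagger\Bigr),
\]
which is to be compared with $-F = -\sum_{j\ge1}\bigl(e(\omega_{2j+1}) + e(\omega_{2j+1})^\dagger\bigr)$. Since $e(\alpha)$ raises form-degree by $\deg\alpha$ while $e(\alpha)^\dagger$ lowers it by the same amount, all the summands on either side act in pairwise distinct degree shifts; as $\alpha\mapsto e(\alpha)$ is moreover injective, $\tau F\tau = -F$ is equivalent to $(-1)^{j}\overline{\omega_{2j+1}} = -\omega_{2j+1}$ for every $j$, i.e. to $\overline{\omega_{2j+1}} = (-1)^{j+1}\omega_{2j+1}$. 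Setting $\omega_{2j+1} = i^{j+1}H_{2j+1}$, this says precisely that each $H_{2j+1}$ is a real-valued $(2j+1)$-form; and conversely any $H = \sum_{j\ge1} i^{j+1}H_{2j+1}$ with each $H_{2j+1}$ real satisfies it. This is the asserted equivalence. (That $H$ is closed plays no role in the anticommutation; only the oddness of its degrees, a standing hypothesis, is used, to make the above parity bookkeeping valid.)
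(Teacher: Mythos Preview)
Your proposal is correct and is, in fact, cleaner than the paper's own argument. The paper proceeds by a direct brute-force computation: for a form $\omega$ of degree $2k$ it writes out $\tau(\nabla^{\E,H}+(\nabla^{\E,H})^\dagger)\omega$ and $(\nabla^{\E,H}+(\nabla^{\E,H})^\dagger)\tau\omega$ line by line, tracking all powers of $i$ coming from the two applications of $\star$, and then matches; for the converse it derives the identity $K\wedge\star\omega+\star(\bar K\wedge\omega)=0$ with $K=\sum_j(H_{2j+1}+(-1)^j\overline{H_{2j+1}})$, separates it by degree, and tests against $\omega=\star 1$ to force $K=0$.

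By contrast, you split off the classical part $B^\E_0$ (whose anticommutation with $\tau$ is standard) and reduce the question to the single pointwise identity $\tau\,e(\omega)\,\tau=(-1)^j\,e(\bar\omega)^\dagger$ for a $(2j{+}1)$-form $\omega$; once that is in hand, injectivity of $\alpha\mapsto e(\alpha)$ together with the degree grading lets you read off the condition $\overline{\omega_{2j+1}}=(-1)^{j+1}\omega_{2j+1}$ directly, without testing against special forms. This packages the same sign bookkeeping much more transparently and makes the role of the reality condition on the $H_{2j+1}$ visible at a glance. The only caveat is that your ``one checks'' for the key identity $\tau\,e(\omega)\,\tau=i^{k(k-1)}e(\bar\omega)^\dagger$ really is the whole computation: it goes through exactly as you describe (the $p$-dependence of the $i$-powers cancels because $k$ is odd, and one uses $e(H)^\dagger=\star\,e(\bar H)\,\star$, i.e.\ the zeroth-order part of $(\nabla^{\E,H})^\dagger=-\star\nabla^{\E,-\bar H}\star$), but it deserves to be written out since it carries the entire content of the proposition.
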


So, it is  in this case that one gets {{a twisted version of the usual}} signature operator, in contrast to the 
case of the twisted de Rham complex, cf. \cite{BCMMS,AS,MW,MW2}. 

\begin{proof} 
{{Suppose that $H = \sum i^{j+1} H_{2j+1} $ is an 
odd-degree closed differential form on $X$ and $H_{2j+1}$ is a real-valued differential form 
of degree ${2j+1}$, and  $\omega \in \Omega^{2k}(Y, \E)$,}}
\begin{align*}
& \tau (\nabla^{\E, H} + {\nabla^{\E, H}}^\dagger) \omega =  \tau \nabla^{\E, H}\omega - \tau \star {\nabla^{\E, -\overline H}}\star\omega \\
 & = i^{m-2k} \star \nabla^{\E}\omega + \sum_{j\ge 0} i^{j+1} \tau( H_{2j+1} \wedge \omega) 
 - \tau(\star \nabla^{\E} \star \omega) + \sum_{j\ge 0} {{(-i)}}^{j+1} \tau( \star H_{2j+1} \wedge \star \omega) \\
 & = i^{m+2k} \star \nabla^{\E} \omega +\sum_{j\ge 0}  i^{m+2(j+k)} i^{j+1} \star( H_{2j+1} \wedge \omega) 
 - i^{m-2k+2} \star^2 \nabla^{\E} \star \omega\\
 & \qquad + \sum_{j\ge 0}  i^{m-2(k-j-1)} {{(-i)}}^{j+1} \star^2 H_{2j+1} \wedge \star \omega \\
 & =  i^{m-2k} \star \left[ \nabla^{\E}  + \sum_{j\ge 0}  i^{-j+1}  H_{2j+1} \wedge\right]\omega
 -  i^{m-2k}  \left[ \nabla^{\E}  + \sum_{j\ge 0}  (-1)^{j+1} i^{-(j+1)}  H_{2j+1} \wedge\right]\star \omega\\
 & =   i^{m-2k} \left(\star \nabla^{\E, -\overline H} - \nabla^{\E, H} \star\right)
 \end{align*}
 On the other hand,
 \begin{align*}
(\nabla^{\E, H} + {\nabla^{\E, H}}^\dagger) \tau \omega & = \nabla^{\E, H}  \tau \omega - \star \nabla^{\E, -\overline H}
\star \tau \omega \\
& = {{i^{m-2k} \left( \nabla^{\E, H}  \star -  \star \nabla^{\E, -\overline H}\right)\omega}},
 \end{align*} 
A similar argument holds for odd degree forms $\omega$. More precisely, we get for a $2k+1$ degree form $\omega$:
$$
\tau (\nabla^ {\E, H} + {\nabla^{\E, H}}^\dagger )\omega ={{-i^{m-2k} (\nabla^{\E, H} \star + \star \nabla^{\E, -{\overline H}}) \omega}},
$$
while the other way computation gives exactly the opposite. 
This proves half the lemma. \\

 If conversely we assume that $(\nabla^{\E, H} + {\nabla^{\E, H}}^\dagger)  \tau + \tau (\nabla^{\E, H} + {\nabla^{\E, H}}^\dagger) = 0$ then restricting for instance to even forms we deduce by straightforward computation that for any $\omega \in\Omega^{2k} (Y, \E)$, the following identity holds
$$
K\wedge \star\omega + \star ({\overline{K}}\wedge \omega) =0,
$$
where $K= \sum_{j\geq 0} K_{2j+1} := \sum_{j\geq 0} (H_{2j+1} + (-1)^j  {\overline{H_{2j+1}}})$. Notice that for any $j, j'\geq 0$ 
$$
{\rm deg} (K_{2j+1} \wedge \star\omega ) = {{2m - 2k +2j+1}}\, \text{ while }\, {\rm deg} (* ({\overline{K}}_{2j'+1} \wedge \omega)) = {{2m -2k -2j'-1}}.
$$
Hence we deduce that for any $k\geq 0$ and any $\omega\in \Omega^{2k} (Y, \E)$,
$$
K\wedge \star\omega = 0 \text{ and } \star ({\overline{K}}\wedge \omega) =0.
$$
Taking for $\omega$ the {{$2m$}} form $\star 1$, we get out of the first relation
$$
K = 0 \text{ i.e. } H_{2j+1} + (-1)^j  {\overline{H_{2j+1}}} = 0, \quad \forall j\geq 0.
$$
\end{proof}

{{Thus if $H$ is purely imaginary, then the signature operator fits more naturally when we replace $H$ by $H^{(i)}$.}}

\begin{proposition}
{{ Assume  that the odd degree closed differential form $H$ is purely imaginary as above and that the dimension of $X$ is even, $n=2m$. Then the metric involution $\tau$ 
preserves the space 
${\mathcal H} (X, \E, H^{(i)})$ of harmonics corresponding to the differential $\nabla^{\E, H^{(i)}}$ and according to the $\pm 1$ eigenspaces, we have the decomposition
$$
{\mathcal H} (X, \E, H^{(i)}) = {\mathcal H}^+ (X, \E, H^{(i)}) \oplus {\mathcal H}^- (X, \E, H^{(i)}),
$$
and $\Sign (X, \E, H) = \dim {\mathcal H}^+ (X, \E, H^{(i)})- \dim {\mathcal H}^- (X, \E, H^{(i)})$.
}}
\end{proposition}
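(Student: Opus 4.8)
The plan is to deduce the two assertions from results already in hand: the anticommutation relation $B^\E_H\tau=-\tau B^\E_H$ (valid exactly for fluxes of the form $\sum i^{j+1}H_{2j+1}$ with real $H_{2j+1}$), the scale invariance of the signature noted after Definition~\ref{def:signature}, and twisted Hodge theory. First, write $H=\sum_{j\ge1}H_{2j+1}$ with each $H_{2j+1}$ purely imaginary, say $H_{2j+1}=i\,\widehat H_{2j+1}$ with $\widehat H_{2j+1}$ a real $(2j+1)$-form. Then $H^{(i)}=\sum_j i^j H_{2j+1}=\sum_j i^{j+1}\widehat H_{2j+1}$ is exactly of the type for which the preceding proposition gives $B^\E_{H^{(i)}}\tau=-\tau B^\E_{H^{(i)}}$, where $B^\E_{H^{(i)}}=\nabla^{\E,H^{(i)}}+(\nabla^{\E,H^{(i)}})^\dagger$. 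Squaring, $\tau$ commutes with $\Delta^\E_{H^{(i)}}=(B^\E_{H^{(i)}})^2$, so it preserves $\mathcal H(X,\E,H^{(i)})=\ker\Delta^\E_{H^{(i)}}$; since $\tau^2=\mathrm{id}$ on $\Omega^\bullet(X,\E)$, this yields the eigenspace splitting $\mathcal H=\mathcal H^+\oplus\mathcal H^-$.

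For the signature formula I would argue in two moves. By the scale-invariance corollary, $\Sign(X,\E,H)$ is the signature of the nondegenerate Hermitian form $B^i$ on $H^\bullet(X,\E,H^{(i)})$, and by twisted Hodge theory on the closed manifold $X$ (the boundaryless case of Proposition~\ref{prop:hodge}, cf.\ \cite{MW,MW2}) this cohomology is canonically represented by $\mathcal H(X,\E,H^{(i)})$; hence it suffices to compute the signature of $B^i$ restricted to $\mathcal H$. I would then express $B^i$ through $\tau$ and the positive-definite $L^2$ Hermitian pairing $\langle\omega,\omega'\rangle=\int_X\omega\wedge\star\overline{\omega'}$: starting from the explicit formula $B^i(\omega,\omega')=(-1)^{[p/2]+m}i^m\int_X\omega\wedge\overline{\omega'}$ for $\omega\in\Omega^p(X,\E)$, and using that $\dim X=2m$ is even so that $\star\star=(-1)^{\deg}$, together with $\alpha\wedge\star\beta=\beta\wedge\star\alpha$ for forms of equal degree, one rewrites $\int_X\omega\wedge\overline{\omega'}=\int_X\star\omega\wedge\star\overline{\omega'}$ and identifies
$$
B^i(\omega,\omega')=\langle\tau\omega,\omega'\rangle .
$$
Since $B^i$ is Hermitian, this forces $\tau$ to be self-adjoint on $\mathcal H$; as $\tau^2=\mathrm{id}$ its spectrum is $\{\pm1\}$ and $\mathrm{sgn}(\tau)=\tau$, whence
$$
\Sign(X,\E,H)=\Sign\bigl(B^i|_{\mathcal H}\bigr)=\Tr\bigl(\tau|_{\mathcal H}\bigr)=\dim\mathcal H^+(X,\E,H^{(i)})-\dim\mathcal H^-(X,\E,H^{(i)}).
$$
Equivalently, $B^i$ is positive definite on $\mathcal H^+$, negative definite on $\mathcal H^-$, and the two are $B^i$-orthogonal (as well as $\langle\,\cdot\,,\cdot\,\rangle$-orthogonal); the contributions of complementary degrees $p\ne m$, where $\tau$ swaps $\mathcal H\cap\Omega^p$ with $\mathcal H\cap\Omega^{2m-p}$, cancel, so the whole signature is carried by the middle degree, as in the classical case.

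The main obstacle is the normalization bookkeeping behind the identity $B^i=\langle\tau\,\cdot\,,\cdot\,\rangle$: one must check that the constant appearing in the signature involution $\tau$, the factor $i^{j+1}$ in the flux form, the scaling $\lambda=i$, and the extra $i$ inserted on odd-degree forms in the definition of $B$ all match so that no spurious overall sign (such as $(-1)^m$) survives, since such a sign would reverse the signature in odd middle dimension. Everything else is a formal consequence of the anticommutation relation, the scale-invariance corollary for the signature, and twisted Hodge theory.
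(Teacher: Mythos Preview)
Your argument is correct and matches the paper's approach: both establish $\tau$-invariance of the harmonics from the anticommutation $B^\E_{H^{(i)}}\tau=-\tau B^\E_{H^{(i)}}$, and both reduce the signature computation to the identity $B^i(\omega,\omega')=\langle\tau\omega,\omega'\rangle$ --- the paper verifies this component by component for $\omega\in\mathcal H^+$, you phrase it as a single operator identity, but the bookkeeping is the same. One caution: your closing remark that ``the whole signature is carried by the middle degree, as in the classical case'' does not survive the twist, since the twisted Laplacian does not preserve form degree and $\mathcal H\cap\Omega^p$ is not a meaningful $\tau$-stable piece of $\mathcal H$; your argument is already complete before that sentence, so simply drop it.
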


\begin{proof}
{{From the previous proposition, we deduce that since $H$ is pure imaginary,
$$
 B^{\E}_ {H^{(i)}} \circ \tau + \tau \circ B^{\E}_ {H^{(i)}} = 0.
$$ 
Therefore 
$$
\Delta^{\E}_ {H^{(i)}} \circ \tau = \tau \circ \Delta^{\E}_ {H^{(i)}},
$$
so that $\tau$ preserves the kernel $ {\mathcal H} (X, \E, H^{(i)})$ of $\Delta^{\E}_ {H^{(i)}}$ and we have the decomposition
$$
{\mathcal H} (X, \E, H^{(i)}) = {\mathcal H}^+ (X, \E, H^{(i)}) \oplus {\mathcal H}^- (X, \E, H^{(i)}).
$$
Now, assume that $\omega \in {\mathcal H}^+ (X, \E, H^{(i)})$, then writing $
\omega = \sum_{j\geq 0} \omega_j$, {{we get}} for any $0\leq j\leq m$, $\tau\omega_j = \omega_{2m -j}$, i.e. $\star \omega_j = (-i)^{m+ j(j-1)} \omega_{2m-j}$. Therefore,
\begin{eqnarray*}
 B^i (\omega, \omega) & = & i^m \times  \int_X \sum_{j} (-1)^{[j/2]+m} \omega_j \wedge {\overline{\omega_{2m-j}}}\\
& = & i^m i^m \sum_{j\geq 0} i^{j(j-1)} (-1)^{[j/2]+m} \int_X \omega_j \wedge \star {\overline{\omega_{j}}}\\
& = & \sum_{j\geq 0}\int_X  \omega_j \wedge \star {\overline{\omega_{j}}}\\
& = & <\omega, \omega>.
\end{eqnarray*}
The similar computation gives that $B^i$ is negative definite on ${\mathcal H}^- (X, \E, H^{(i)})$, which ends the proof since we know that $\Sign (X, \E, H)$ coincides with the signature
of the hermitian form $B^i$. }}
\end{proof}

\begin{corollary}
Given a purely imaginary odd degree closed differential form $H$ on $X$, the twisted signature $\Sign (X, \E, H)$ coincides with the index of the elliptic operator 
$$
B^\E_{H^{(i)}} : \Omega_+ (X, \E) \rightarrow \Omega_- (X, \E),
$$
where $\Omega_\pm (X, \E)$ are the (twisted by $\E$) differential forms $\omega$ which satisfy $\tau\omega = \pm \omega$.
\end{corollary}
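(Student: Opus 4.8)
The plan is to identify the twisted signature $\Sign(X,\E,H)$ with the index of $B^\E_{H^{(i)}}$ restricted to the $+1$-eigenspace of $\tau$, by combining the Hodge-theoretic description from the previous Proposition with the elementary observation that an elliptic self-adjoint operator anticommuting with $\tau$ has its index (relative to the $\pm$ splitting) computed entirely by its kernel. First I would recall that, by the previous Proposition, $B^\E_{H^{(i)}}\tau = -\tau B^\E_{H^{(i)}}$, so that $B^\E_{H^{(i)}}$ interchanges $\Omega_+(X,\E)$ and $\Omega_-(X,\E)$; hence it makes sense to restrict it to an operator $B^\E_{H^{(i)}}\colon \Omega_+(X,\E)\to\Omega_-(X,\E)$, which is elliptic because $B^\E_{H^{(i)}}$ is. Its formal adjoint is the restriction $B^\E_{H^{(i)}}\colon \Omega_-(X,\E)\to\Omega_+(X,\E)$, again by self-adjointness of $B^\E_{H^{(i)}}$ together with the fact that $\tau$ is a self-adjoint involution making $\Omega_\pm$ mutually orthogonal.

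The core step is then the standard Hodge argument: $\ker\bigl(B^\E_{H^{(i)}}|_{\Omega_+}\bigr) = \ker\bigl(\Delta^\E_{H^{(i)}}\bigr)\cap\Omega_+ = \mathcal H^+(X,\E,H^{(i)})$, using that $\ker B^\E_{H^{(i)}} = \ker \Delta^\E_{H^{(i)}}$ on a closed manifold (the square of a formally self-adjoint operator has the same kernel), and similarly $\operatorname{coker}\bigl(B^\E_{H^{(i)}}|_{\Omega_+}\bigr)\cong \ker\bigl(B^\E_{H^{(i)}}|_{\Omega_-}\bigr) = \mathcal H^-(X,\E,H^{(i)})$. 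Therefore
$$
\Ind\bigl(B^\E_{H^{(i)}}\colon \Omega_+(X,\E)\to\Omega_-(X,\E)\bigr)
= \dim\mathcal H^+(X,\E,H^{(i)}) - \dim\mathcal H^-(X,\E,H^{(i)}).
$$
By the previous Proposition the right-hand side equals $\Sign(X,\E,H)$, which finishes the proof.

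I do not expect a genuine obstacle here; the statement is essentially a repackaging of the preceding Proposition in index-theoretic language, and every ingredient — ellipticity of $B^\E_{H^{(i)}}$, the anticommutation with $\tau$, orthogonality of the eigenspaces of $\tau$, and the equality $\ker B = \ker \Delta$ — is either already established in the excerpt or is a routine fact about elliptic self-adjoint operators on closed manifolds. The one point that deserves a sentence of care is the identification of the cokernel with $\ker\bigl(B^\E_{H^{(i)}}|_{\Omega_-}\bigr)$, which requires knowing that $B^\E_{H^{(i)}}|_{\Omega_+}$ and $B^\E_{H^{(i)}}|_{\Omega_-}$ are formal adjoints of each other; this follows from the self-adjointness of the full operator $B^\E_{H^{(i)}}$ on $\Omega^\bullet(X,\E)$ and the fact that $\tau$ is an orthogonal involution, so no further analysis is needed.
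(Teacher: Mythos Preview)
Your proposal is correct and follows essentially the same approach as the paper, which simply invokes ``the classical proof using Hodge theory for the elliptic complex associated with $(\Omega(X,\E),\nabla^{\E,H^{(i)}})$''. You have spelled out the details of that classical argument---anticommutation with $\tau$, self-adjointness, $\ker B=\ker\Delta$, and the identification of the index with $\dim\mathcal H^+-\dim\mathcal H^-$---exactly as intended.
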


\begin{proof}
 We simply apply the classical proof using Hodge theory for the elliptic complex associated with $(\Omega (X, \E), \nabla^{\E, H^{(i)}})$. See for instance \cite{BenameurHeitschJDG}.

\end{proof}

{{Notice that for purely imaginary $H$, the rescaled form $H^{(i)}$ is purely imaginary if and only if the degree of $H$ is congruent to $1$ modulo $4$. In this case
$$
H^{(i)}_{4j+1} = (-1)^j H_{4j+1}, \quad \forall j\geq 1,
$$
and the signature of $B^i$ on $H^\bullet (X, \E, H^{(i)})$ is then equal to the signature of $B$ on $H^\bullet (X, \E, H^{(i)})$.}}

{{
\begin{proposition}\
Assume that the closed odd degree form  $H$ is purely imaginary with degree congruent to $1$ modulo $4$. Then, when $m=2\ell$ is even,  the signature of the restriction of 
$B$ to odd forms is zero. When $m=2\ell -1$ is odd,  the signature of the restriction of $B$ to  even forms is zero. 
\end{proposition}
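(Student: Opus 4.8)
The plan is to prove the single statement $\Sign\bigl(B|_{H^{\overline{m+1}}(X,\E,H)}\bigr)=0$, which is the odd-form assertion when $m$ is even and the even-form assertion when $m$ is odd. I would get this by exhibiting an explicit Lagrangian subspace of $H^{\overline{m+1}}(X,\E,H)$ for the Hermitian form $B$ — which is nondegenerate there by the Poincar\'e duality of Proposition \ref{prop:pd} — built from the de Rham degree filtration of the twisted complex. The mechanism that makes exactly \emph{one} parity collapse is that the ``middle'' degree part $\Omega^m(X,\E)$ lives in the \emph{other} parity, namely $\bar m$.

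First I would record the cochain model of $B$. Since $H$ is purely imaginary, $H+\overline H=0$, so the Leibniz rule of Section~1 collapses to $d(\omega\wedge\overline{\omega'})=\nabla^{\E,H}\omega\wedge\overline{\omega'}+(-1)^{\deg\omega}\,\omega\wedge\overline{\nabla^{\E,H}\omega'}$ on scalar-valued forms (the $\E$-factors contracted by the Hermitian metric), and Stokes shows $B_0([\omega],[\omega'])=\int_X\omega\wedge\overline{\omega'}$ is a well-defined pairing on $H^\bullet(X,\E,H)$, namely the Poincar\'e pairing; a sign check shows it is Hermitian on $H^{\bar 0}$ and skew-Hermitian on $H^{\bar 1}$, consistently with $B$ ($=B_0$ on even forms, $=i\,B_0$ on odd forms) being Hermitian and nondegenerate. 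Next I would introduce the subcomplex $F^j\Omega^\bullet(X,\E):=\bigoplus_{p\ge j}\Omega^p(X,\E)$ of $(\Omega^\bullet(X,\E),\nabla^{\E,H})$ — this \emph{is} a $\ZA_2$-graded subcomplex because $\nabla^{\E,H}$ never lowers the de Rham degree ($\nabla^\E$ raises it by $1$, the higher components $H_{2j+1}\wedge$ raise it by more) — and note two facts: pointwise nondegeneracy of the wedge $\Omega^p\otimes\Omega^{2m-p}\to\Omega^{2m}$ gives $(F^j)^\perp=F^{2m+1-j}$ for $\langle\omega,\omega'\rangle=\int_X\omega\wedge\overline{\omega'}$, so in particular $(F^{m+1})^\perp=F^m$; and $F^m/F^{m+1}\cong\Omega^m(X,\E)$ has zero induced differential and is concentrated in parity $\bar m$.

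The Lagrangian I would use is $\mathcal L:=\operatorname{im}\bigl(H^{\overline{m+1}}(F^{m+1})\to H^{\overline{m+1}}(X,\E,H)\bigr)$. It is isotropic for $B_0$ since, for closed representatives $\omega,\omega'\in F^{m+1}$, the integrand $\omega\wedge\overline{\omega'}$ has no component of degree $2m$. To see it equals its own perp, I would use three inputs: the long exact sequence of $0\to F^{m+1}\to F^m\to\Omega^m\to 0$ together with $H^{\overline{m+1}}(\Omega^m)=0$ (wrong parity), which gives $\mathcal L=\operatorname{im}(H^{\overline{m+1}}(F^m)\to H^{\overline{m+1}}(X,\E,H))$; the long exact sequence of $0\to F^m\to\Omega^\bullet\to\Omega^\bullet/F^m\to 0$, which identifies this image with $\ker\bigl(\pi\colon H^{\overline{m+1}}(X,\E,H)\to H^{\overline{m+1}}(\Omega^\bullet/F^m)\bigr)$; and the duality $(F^{m+1})^\perp=F^m$, which turns $\langle\cdot,\cdot\rangle$ into a pairing between $\Omega^\bullet/F^m$ and $F^{m+1}$ that is perfect on cohomology (Poincar\'e duality again), under which $\pi([\eta])$ becomes the functional $[\omega]\mapsto\int_X\eta\wedge\overline{\omega}=B_0([\eta],[\omega])$ on $H^{\overline{m+1}}(F^{m+1})$, so that $\ker\pi=\mathcal L^{\perp}$. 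Combining, $\mathcal L=\mathcal L^{\perp}$, hence $\Sign(B|_{H^{\overline{m+1}}(X,\E,H)})=0$.

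The hard part will be the last identification: checking that the cochain-level duality $(F^{m+1})^\perp=F^m$ really descends to a perfect pairing on cohomology that is compatible with the connecting homomorphisms and with the $\ZA_2$-grading, so that $\pi$ is literally ``pair against $B_0$''. This is the statement that Poincar\'e duality is inherited by the degree spectral sequence, and it is where all the sign and parity bookkeeping must be carried out carefully; everything else is formal. (The same argument runs verbatim with $H^{(i)}$ and $B^i$ in place of $H$ and $B$, since $\nabla^{\E,H^{(i)}}$ also preserves the degree filtration, which is presumably the form in which it fits the surrounding discussion.)
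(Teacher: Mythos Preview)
Your argument is correct and takes a genuinely different route from the paper. The paper exploits the hypothesis $\deg H\equiv 1\pmod 4$ to split the twisted complex into four mod-$4$ pieces, then writes down the explicit degree-flip involution on $\Omega^{odd}(X,\E)$ that changes the sign of components in degree $>m$; this involution anti-commutes with $\tau$ on the harmonic forms, whence $\dim\mathcal H^+_{odd}=\dim\mathcal H^-_{odd}$ and the signature vanishes. Your approach instead builds a Lagrangian from the degree filtration $F^\bullet$, and the decisive observation --- that $F^{m}H^{\overline{m+1}}=F^{m+1}H^{\overline{m+1}}$ because $\Omega^m$ sits in the ``wrong'' parity $\bar m$ --- is what replaces the paper's mod-$4$ splitting. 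Your proof is more robust in two respects: it works entirely at the cohomological level rather than through harmonic representatives, and it uses only that $H$ is purely imaginary with no degree-$1$ component (the paper's standing assumption), so in fact you do \emph{not} need the hypothesis $\deg H\equiv 1\pmod 4$ at all. The ``hard part'' you flag is precisely the self-duality of the degree spectral sequence, equivalently the filtration identity $(F^jH)^\perp=F^{2m+1-j}H$ under $B_0$; this is standard once one notes that the $E_1$ page is ordinary $\E$-valued de Rham cohomology with its Poincar\'e pairing and that the higher differentials are (up to sign) mutually adjoint by the Leibniz rule, so perfectness propagates through the pages. With that in hand, $\mathcal L=F^{m+1}H^{\overline{m+1}}=F^mH^{\overline{m+1}}=(F^{m+1}H^{\overline{m+1}})^\perp=\mathcal L^\perp$ is immediate.
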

}}

\begin{proof}
{{In this case the twisted complex decomposes as }}

\hspace{1.4in}

\begin{picture}(415,100)
\put(115,75){$ \Omega ^{4\bullet}(X, \E) $}
\put(150,30){ $\vector(0,1){35}$}
\put(125,48){$\nabla^{\E,H}$}

\put(190,85){$\nabla^{\E,H}$}
\put(170,79){$\vector(1,0){60}$}

\put(195,25){$\nabla^{\E,H}$}
\put(230,19){$\vector(-1,0){50}$}

\put(235,75){$\Omega ^{4\bullet +1}(X, \E)$}
\put(255,65){ $\vector(0,-1){35}$}
\put(270,48){$\nabla^{\E,H}$}

\put(115,15){$\Omega ^{4\bullet +3}(X, \E)$}

\put(235,15){$\Omega^{4\bullet +2} (X,\E)$}
\end{picture}

{{Hence, the twisted odd and even cohomology spaces decompose into
\begin{multline*}
H^{even} (X, \E, H) = H^{4\bullet} (X, \E, H) \oplus H^{4\bullet + 2} (X, \E, H) \text{ and } \\H^{odd} (X, \E, H) = H^{4\bullet+1} (X, \E, H) \oplus H^{4\bullet + 3} (X, \E, H).
\end{multline*}
Assume for instance that $m=2\ell$ is even, then by direct inspection, we  see that the involution of $\Omega^{odd} (X, \E)$ given by 
$$
\sum_{k\geq 0} \omega_{2k+1} = \omega \longmapsto \sum_{k= 0}^{\ell-1} \omega_{2k+1} - \sum_{k=\ell}^{2\ell -1} \omega_{2k+1},
$$
interchanges the space of twisted harmonics which are in the $+1$-eigenspace of  $\tau$ with the space of  twisted harmonics which are in the $-1$-eigenspace 
of $\tau$. Therefore the signature of the hermitian sesquilinear form $B$ restricted to the odd forms in zero. The same involution on the even forms works {{to give the proof}} when $m$ is odd.}} 
\end{proof}

{{
\begin{proposition}
Let again $X$ be a smooth closed oriented manifold of even dimension $n=2m$ and let $\E$ be a flat hermitian bundle over $X$ and $H$, as before, a purely imaginary odd degree closed 
differential form on $X$. Then the twisted signature $\Sign(Y, \E, H)$ introduced in Definition \ref{def:signature} does not depend on $H$ 
and is given by
$$
\Sign(X, \E, H) = \Rank (\E) \times \Sign (X) = \Sign (X, \E),
$$
where $\Sign (X)$ is the signature of the closed oriented manifold $X$ (trivial if $m$ is odd). In particular, when $\E$ is the trivial line bundle, 
the $H$-twisted signature coincides 
with the usual signature of $X$.
\end{proposition}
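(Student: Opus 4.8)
The plan is to realize the twisted signature as a Fredholm index and then to discard the flux form, exploiting that the index of an elliptic operator on a closed manifold is unchanged under perturbations of order zero. By the Corollary above expressing the twisted signature as an index,
$$
\Sign(X,\E,H)=\Index\bigl(B^\E_{H^{(i)}}\colon\Omega_+(X,\E)\to\Omega_-(X,\E)\bigr),
$$
so the whole problem reduces to computing this single index.

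First I would record the shape of the rescaled flux. Since $H$ is purely imaginary I can write $H=\sum_{\ell\geq1}i\,a_{2\ell+1}$ with each $a_{2\ell+1}$ a real $(2\ell+1)$-form; then $H^{(i)}=\sum_{\ell\geq1}i^{\ell+1}a_{2\ell+1}$, which is closed, odd-degree, and of the form $\sum_{j}i^{j+1}\beta_{2j+1}$ with the $\beta_{2j+1}$ real-valued --- exactly the class for which the signature operator anticommutes with $\tau$, by the Proposition characterising admissible fluxes. The same holds verbatim for $t\,H^{(i)}=\sum_{\ell\geq1}i^{\ell+1}(t\,a_{2\ell+1})$ for every $t\in[0,1]$, so each $B^\E_{tH^{(i)}}$ restricts to an elliptic operator $\Omega_+(X,\E)\to\Omega_-(X,\E)$. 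Since the flux enters only through the bundle endomorphism $H^{(i)}\wedge\,\cdot\,+(H^{(i)}\wedge\,\cdot\,)^\dagger$, which is of order zero, $t\mapsto B^\E_{tH^{(i)}}$ is a norm-continuous family of Fredholm operators with $t$-independent principal symbol; hence its index is constant, and $\Index(B^\E_{H^{(i)}})=\Index(B^\E_0)$, where $B^\E_0:=\nabla^\E+(\nabla^\E)^\dagger$ is the untwisted signature operator with coefficients in $\E$. As the right-hand side involves neither $H$ nor the scalings, this already yields that $\Sign(X,\E,H)$ is independent of the flux form $H$.

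Next I would identify $\Index(B^\E_0)$. Its principal symbol is that of the ordinary signature operator tensored with $\mathrm{id}_\E$, so by the Atiyah--Singer index theorem the index is obtained from the untwisted one by inserting the Chern character of $\E$. Because the canonical connection $\nabla^\E$ is flat, its Chern--Weil forms in positive degree vanish and $\operatorname{ch}(\E)=\Rank(\E)$, whence
$$
\Index(B^\E_0)=\Rank(\E)\int_X\LL(\Omega_X)=\Rank(\E)\,\Sign(X)
$$
by the Hirzebruch signature theorem; this is automatically $0$ when $m$ is odd, since then $\dim X\equiv2\pmod4$. Putting the two paragraphs together gives $\Sign(X,\E,H)=\Rank(\E)\,\Sign(X)$, and for $\E$ the trivial line bundle this reduces to the ordinary signature of $X$.

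The part I expect to be the main obstacle is making the first reduction airtight: one must check that the rescaled fluxes $t\,H^{(i)}$ genuinely stay in the class for which $B^\E_{tH^{(i)}}$ anticommutes with $\tau$ (so that the restrictions to $\Omega_\pm$ are defined uniformly in $t$), and that $t\mapsto B^\E_{tH^{(i)}}$ is continuous in the operator norm between the appropriate Sobolev completions, so that homotopy invariance of the Fredholm index applies; alternatively one argues via invariance of the index under homotopies of the principal symbol on the closed manifold $X$. Both routes are routine once the characterisation of admissible fluxes is in hand, but they are exactly what licenses dropping the flux. The remaining ingredient, $\Index(B^\E_0)=\Rank(\E)\Sign(X)$, is the classical signature theorem for a flat coefficient bundle and uses nothing beyond the vanishing of the positive-degree Chern--Weil forms of a flat connection.
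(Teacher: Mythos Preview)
Your proof is correct and follows essentially the same route as the paper: both identify $\Sign(X,\E,H)$ with $\Index(B^\E_{H^{(i)}})$ via the preceding Corollary, observe that the flux only perturbs the operator by a zeroth-order term so the principal symbol (hence the index) agrees with that of the untwisted $B^\E$, and finish with the Atiyah--Singer theorem for a flat coefficient bundle. The paper simply states that the principal symbols coincide and invokes symbol-homotopy invariance of the index, whereas you spell out the explicit homotopy $t\mapsto B^\E_{tH^{(i)}}$ and the Chern--Weil vanishing; this is just added detail, not a different argument.
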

}}

\begin{proof} {{As explained above, the signature $\Sign(X, \E, H)$ coincides with the signature of a corresponding hermitian sesquilinear form on the 
twisted cohomology corresponding 
to the closed odd degree form $H^{(i)}$. 
But this latter signature coincides with 
the index of the signature operator $B^\E_{H^{(i)}}: \Omega_+ (X, \E) \to \Omega_- (X, \E)$.  Now the principal symbol of $B^\E_{H^{(i)}}$ equals the principal symbol of 
$B^\E$, 
the usual signature operator with the local coefficients
$\E$.  Hence, since the index of $B^\E_H$ only depends on the (homotopy class of the) principal symbol, we deduce 
$$
\Sign(X, \E, H) = \Index (B^\E) = \Sign (X, \E).
$$
Applying for instance the Atiyah-Singer theorem \cite{AS3}, we know that 
$$
\Sign (Y, \E) = \Sign (Y) \times \Rank (\E).
$$}}
\end{proof}

\begin{remark}
If we decompose $B^\E$ into 
$(B_H^\E)^{even}: \Omega^{even}_+ (X, \E)\to \Omega^{odd}_- (X, \E)$ and to $(B_H^\E)^{odd}: \Omega^{odd}_+ (X, \E)\to \Omega^{even}_- (X, \E)$, then denoting by 
 $\chi (X, \E)$ the Euler characteristic with local coefficients $\E$ one gets the twisted analogue of well known relations,
\begin{multline*}
\Index ((B_H^\E)^{even}) = \frac{\Sign (X, \E) + \chi (X, \E)}{2}, \qquad \Index ((B_H^\E)^{odd}) = \frac{\Sign (X, \E) - \chi (X, \E)}{2}.
\end{multline*}
\end{remark}

We end this section by explaining very briefly how to extend our definition of the twisted signature to the case of manifolds with boundary. 
For even dimensional compact manifolds  $Y$ with boundary $\partial Y$, and for any odd degree purely imaginary closed differential form $H$ which satisfies the absolute boundary condition, 
we define the same hermitian sesquilinear form $B$ by $B= B_0$ on even forms and $B=iB_0$ on odd forms, where 
$$
B_0 (\omega, \omega' ) = \int_Y \omega \wedge {\overline{\omega'}}.
$$
But now $B$ is viewed on the relative cohomology 
groups $H^{\bar k}(Y, \partial Y, \E,H)$. The radical of $B$ is then precisely the 
kernel of the natural map from the relative cohomology  $H^{\bar k}(Y, \partial Y, \E,H)$ into 
the absolute cohomology $H^{\bar k}(Y,  \E,H)$. Therefore the
quadratic form is precisely nondegenerate on the 
image of the relative cohomology  $H^{\bar k}(Y, \partial Y, \E,H)$ inside the absolute cohomology 
$H^{\bar k}(Y,  \E,H)$. Again the signature of this hermitian sesquilinear form 
is defined to be the twisted signature ${\rm Sign}(Y, \E, H)$ of the manifold with boundary $Y$.

We may again  define the signature ${\rm Sign}(Y, \E, H)$ using the appropriate hermitian form for the rescaled form $H^{(\lambda)}$ corresponding to 
any $\lambda\in \ST^1$. This is particularly important for $\lambda = i$ as explained before, and where the hermitian form $B^i$ is given by the same formula
$$
B^i (\omega, \omega'):= (-1)^{[p/2]} (-i)^m \times  \int_Y \omega \wedge {\overline{\omega'}}, \omega \in \Omega^p (X, \E),
$$
except that now we integrate over the manifold with boundary $Y$.

\begin{corollary}
Let $\, \Sign(Y, \E, H)\,$ denote the signature of the quadratic form $B$ above. 
Since $\, B\, $ is defined in terms of the twisted relative cohomology groups, 
$\, \Sign(Y, \E, H)$ is a homotopy invariant of the quadruple $(Y, \partial Y, \E, [H])$.
\end{corollary}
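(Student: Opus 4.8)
The plan is to show that every piece of data entering $\Sign(Y,\E,H)$ is natural under orientation-preserving homotopy equivalences of pairs, so that such an equivalence induces an isometry of the finite-dimensional hermitian spaces in question. Recall from the discussion preceding the statement that $B$, defined on the twisted relative cohomology $H^\bullet(Y,\partial Y,\E,H)$ by $B_0(\omega,\omega')=\int_Y\omega\wedge\overline{\omega'}$ with the extra factor $i$ on odd forms, has radical exactly the kernel of the inclusion-induced map $j\colon H^\bullet(Y,\partial Y,\E,H)\to H^\bullet(Y,\E,H)$; hence $\Sign(Y,\E,H)$ is the signature of the nondegenerate hermitian form $\bar B$ that $B$ induces on $V:=\im j$. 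Since $B_0$ uses no Riemannian or Hermitian metric (there is no Hodge star in it), metric-independence is automatic, and what remains to be seen is invariance under homotopy equivalences of pairs and independence of the chosen representative of $[H]$.

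First I would fix an orientation-preserving homotopy equivalence $f\colon(Y',\partial Y')\to(Y,\partial Y)$ of compact oriented manifolds with boundary, a flat isomorphism $f^*\E\cong\E'$, and a closed representative $H'$ of the class associated with $f^*[H]$. As $H$, $H'$ are purely imaginary and $f^*H$, $H'$ are cohomologous, one may write $f^*H-H'=dB$ with $B$ purely imaginary (and, in the conventions of the paper, satisfying the absolute boundary condition). Realizing the twisted absolute cohomology by the full twisted de Rham complex $(\Omega^\bullet(Y,\E),\nabla^{\E,H})$ and the twisted relative cohomology by the subcomplex of forms restricting to zero on $\partial Y$, so that $j$ is induced by this inclusion, both $f^*$ and $\varepsilon_B=e^{B}\wedge\cdot\,$ are honest cochain maps preserving the relative subcomplex. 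By Proposition \ref{prop:homotopy} (as in Corollary \ref{cor:hmtp}) the induced maps in cohomology depend only on the homotopy class of $f$, and composing $f^*$ with $\varepsilon_B$ and with the isomorphism induced by $f^*\E\cong\E'$ gives isomorphisms $\Phi\colon H^\bullet(Y,\E,H)\xrightarrow{\sim}H^\bullet(Y',\E',H')$, $\Phi_r\colon H^\bullet(Y,\partial Y,\E,H)\xrightarrow{\sim}H^\bullet(Y',\partial Y',\E',H')$ with $\Phi\circ j=j'\circ\Phi_r$; hence $\Phi$ restricts to an isomorphism $V\xrightarrow{\sim}V'$.

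Next I would check that $\Phi$ is an isometry. For closed forms $\omega,\omega'$ on $Y$ of complementary total degree, using that $B$ has even degree (so $e^{B}\wedge e^{\overline B}=e^{B+\overline B}=1$, since $\overline B=-B$) and that an orientation-preserving homotopy equivalence of pairs has degree $1$,
$$
B'_0\big(\varepsilon_Bf^*\omega,\ \varepsilon_Bf^*\omega'\big)=\int_{Y'}e^{B+\overline B}\wedge f^*(\omega\wedge\overline{\omega'})=\int_{Y'}f^*(\omega\wedge\overline{\omega'})=\int_Y\omega\wedge\overline{\omega'}=B_0(\omega,\omega'),
$$
and the sign distinguishing $B$ from $B_0$ on odd forms is preserved since $\Phi$ respects form-degree. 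So $\Phi$ carries $\bar B$ to $\bar B'$; in particular $\bar B'$ is again nondegenerate on $V'$, $(V,\bar B)\cong(V',\bar B')$ as hermitian spaces, and $\Sign(Y,\E,H)=\Sign(Y',\E',H')$. Applying the same argument with $f=\mathrm{id}$ and $H'$ any other closed representative of $[H]$ shows independence of the representative, so $\Sign(Y,\E,H)$ depends only on $(Y,\partial Y,\E,[H])$.

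The step I expect to be the main obstacle is upgrading the invariance of the twisted Betti numbers (Lemma \ref{lem:fluxindep}) to invariance of the full signature datum, i.e., keeping track simultaneously of the compatibility $\Phi\circ j=j'\circ\Phi_r$ with the relative-to-absolute maps and of the isometry property of $\Phi$ for the hermitian forms. Both rest on realizing the cohomologies by boundary-condition-adapted de Rham-type complexes on which $f^*$ and $\varepsilon_B$ are genuine cochain maps --- the standard absolute/relative de Rham machinery for manifolds with boundary quoted from \cite{GilkeyBook,Schwarz} --- and on the facts that an orientation-preserving homotopy equivalence of pairs has degree $1$ and that $B$ can be chosen purely imaginary, which together make the integration pairing, hence $B$, transform without correction factors.
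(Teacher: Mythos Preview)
Your argument is correct and follows the same line the paper has in mind: the corollary in the paper carries no separate proof at all --- the justification is the clause ``since $B$ is defined in terms of the twisted relative cohomology groups'' together with the earlier homotopy invariance (Proposition~\ref{prop:homotopy}, Corollary~\ref{cor:hmtp}) and flux-independence (Lemma~\ref{lem:fluxindep}) of those groups. You have supplied the details the paper leaves implicit, namely that the induced isomorphisms are actually \emph{isometries} for $B$, via the computation $e^{B}\wedge e^{\overline B}=e^{B+\overline B}=1$ for purely imaginary $B$ and the degree-one property of an orientation-preserving homotopy equivalence of pairs.

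One small imprecision worth flagging: you write that ``$\Phi$ respects form-degree'', but $\varepsilon_B=e^{B}\wedge\cdot$ only respects the $\ZA/2$-parity, not the integer degree. This is harmless for your purposes, since the distinction between $B$ and $B_0$ (the extra factor $i$ on odd forms) depends only on parity; you may want to replace ``form-degree'' by ``parity'' to be accurate.
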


\section{The APS type index theorem for the twisted signature complex}

\subsection{Eta invariants of twisted signature complexes}
\label{sect:etarho}

In this section, we define rho invariant $\rho(X,\E,H)$ and eta invariant $\eta(D^\E_H)$
of the twisted signature complexes. But let us review the boundary signature operator first.

{{Let $Y$ be an $2m$ dimensional compact, oriented Riemannian manifold with boundary, $\E$
a flat hermitian vector bundle over $Y$.  }}
 Consider the standard signature involution 
$\tau$ on $\E$-valued forms on $Y$, defined by $\tau:= i^{m+p(p-1)} {{\star_\E, \, \, \star_\E = \star \#}}$ where 
$\star$ is 
the Hodge star operator and $\#: \E \cong \E^*$ the isomorphism defined by the hermitian metric. {{For simplicity, we will usually replace $\star_\E$ by $\star$ when no confusion can occur.}}

{{ We denote by $i^*\omega$ the restriction of a closed differential form $\omega$ to the boundary $\partial Y$. We shall say that the given closed differential form 
$\omega$ on $Y$ is boundary-compatible if there exists a collar neighborhood
$Y_\epsilon\cong (-\epsilon, 0] \times \partial Y$ with projection map, 
 $p_\epsilon: \bar Y_\epsilon\cong [-\epsilon, 0] \times \partial Y \to \partial Y$ such that }}
\begin{equation}\label{bdry.compatible}
{{\omega \vert_{Y_\epsilon}  = p_\epsilon^* (i^*\omega).}}
\end{equation}

\begin{lemma}\label{lem:bdry.compatible}
Any closed differential form $\omega$ on $Y$ is cohomologous to a a closed differential form that is boundary compatible.
\end{lemma}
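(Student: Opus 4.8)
The plan is to use a collar neighborhood together with a cut-off function to replace $\omega$ near $\partial Y$ by the pullback of its restriction, correcting by an exact form. First I would fix a collar neighborhood $Y_\epsilon \cong (-\epsilon,0]\times\partial Y$ with projection $p_\epsilon: \bar Y_\epsilon \to \partial Y$ and coordinate $r\in(-\epsilon,0]$ in the normal direction. On this collar, write $\omega|_{Y_\epsilon} = \omega_0(r) + dr\wedge\omega_1(r)$, where for each $r$, $\omega_0(r)$ and $\omega_1(r)$ are $r$-dependent families of forms on $\partial Y$ (pulled back via $p_\epsilon$); the closedness condition $d\omega=0$ translates into $d_{\partial Y}\omega_0(r) = 0$ and $\partial_r \omega_0(r) = d_{\partial Y}\omega_1(r)$. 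Note that at $r=0$ we have $\omega_0(0) = i^*\omega$. The candidate "corrected" form on the collar is $p_\epsilon^*(i^*\omega) = \omega_0(0)$, which is manifestly boundary-compatible in the sense of \eqref{bdry.compatible}.

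Next I would construct the primitive. On the collar, consider the form
$$
\beta(r) = \int_0^r \omega_1(t)\,dt,
$$
a form on $\partial Y$ depending on $r$ (again pulled back by $p_\epsilon$, with no $dr$ component). A direct computation gives $d\beta = d_{\partial Y}\beta + dr\wedge\partial_r\beta = \int_0^r d_{\partial Y}\omega_1(t)\,dt + dr\wedge\omega_1(r) = \bigl(\omega_0(r)-\omega_0(0)\bigr) + dr\wedge\omega_1(r) = \omega|_{Y_\epsilon} - p_\epsilon^*(i^*\omega)$, using the two structure equations above. Thus on the collar $\omega - d\beta = p_\epsilon^*(i^*\omega)$ is boundary-compatible. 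The remaining issue is that $\beta$ is only defined on $Y_\epsilon$, so I would choose a smooth cut-off function $\chi$ on $Y$ equal to $1$ on a smaller collar $Y_{\epsilon/2}$ and supported in $Y_\epsilon$, and set $\omega' = \omega - d(\chi\beta)$. This is a globally defined closed form cohomologous to $\omega$, and on $Y_{\epsilon/2}$ it agrees with $p_{\epsilon/2}^*(i^*\omega)$, hence is boundary-compatible (shrinking the collar if necessary). The twisted version with coefficients in $\E$ and the twisted differential $\nabla^{\E,H}$ is not needed here since the statement concerns an ordinary closed form $\omega$; but the same argument applies verbatim to produce a $\nabla^{\E,H}$-closed boundary-compatible representative in a twisted cohomology class, which is what is used later.

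The main obstacle — though a mild one — is bookkeeping: making sure the decomposition $\omega = \omega_0 + dr\wedge\omega_1$ and the structure equations are set up so that the primitive $\beta$ genuinely kills the $r$-dependence of $\omega_0$ and the entire $dr$-component, and that multiplying by the cut-off $\chi$ does not reintroduce a $dr$-component or destroy boundary-compatibility on the inner collar. Since $\chi\equiv 1$ there, $d(\chi\beta) = d\beta$ on $Y_{\epsilon/2}$ and no new terms appear, so the argument closes. One should also remark that boundary-compatibility is preserved under further shrinking of $\epsilon$, so the final representative $\omega'$ satisfies \eqref{bdry.compatible} for the collar $Y_{\epsilon/2}$.
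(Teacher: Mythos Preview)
Your argument is correct. You explicitly construct, via fiber integration on the collar, a primitive $\beta$ for $\omega - p_\epsilon^*(i^*\omega)$ and then cut it off; this is essentially the hands-on computation of the Cartan homotopy operator for the inclusion $\partial Y\hookrightarrow Y_\epsilon$. The paper instead argues more abstractly: it builds a retraction $q_\epsilon\colon Y\to Y\setminus Y_\epsilon$ (collapsing the collar via $p_\epsilon$), notes that $q_\epsilon^*i_\epsilon^*(\omega)$ is manifestly boundary-compatible, and appeals to the standard chain homotopy $K$ for the retraction to conclude $q_\epsilon^*i_\epsilon^*(\omega)-\omega=dK\omega$ for $\omega$ closed. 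Your $\chi\beta$ is precisely an explicit incarnation of $K\omega$ (localized with a cutoff), so the two approaches are close cousins; yours is self-contained and makes the smoothness and support conditions transparent, while the paper's is shorter and delegates the homotopy formula to \cite{BoTu}. Either way one obtains a boundary-compatible representative on a possibly smaller collar, which is all that is needed downstream.
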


\begin{proof}
Define the maps,
\begin{align*}
q_\epsilon &: Y \longrightarrow Y\setminus Y_\epsilon\\
& q_\epsilon(y) = y, \qquad \forall y\in Y\setminus Y_\epsilon\\
& q_\epsilon(y) = p_\epsilon(y),  \qquad \forall y\in Y_\epsilon
\end{align*}
and the inclusion map,
$$
i_\epsilon :  Y\setminus Y_\epsilon \hookrightarrow Y.
$$
Note that for $y\in \partial(Y\setminus Y_\epsilon) \cong \partial Y$, one has $p_\epsilon(y)=y$, so that 
$q_\epsilon$ is continuous. It is standard to show that $q_\epsilon$ can be deformed to a smooth map
with similar properties, denoted by the same symbol.

Then $q_\epsilon\circ i_\epsilon =1$, so that $i_\epsilon^* q_\epsilon^*(\omega)=\omega$ for all 
$\omega \in \Omega^\bullet(Y\setminus Y_\epsilon).$ Since 
$q_\epsilon$ is a retraction from $Y$ to $Y\setminus Y_\epsilon$, it follows that
 there is a chain homotopy $K$ (cf. \cite{BoTu}) such that 
$$
q_\epsilon^*i_\epsilon^*(\omega)-\omega =(dK+Kd)\omega, \qquad \forall \omega  \in \Omega^\bullet(Y).
$$
This proves the lemma since $q_\epsilon^*i_\epsilon^*(\omega)$ is clearly boundary-compatible.
\end{proof}

\begin{lemma}\label{lem:bdry-iden}
Assuming that the 
Riemannian metric on $Y$ and the hermitian metric on $\E$ is of product type near the boundary {{and that the closed form $H$ is  boundary-compatible}},
we explicitly identify 
the twisted signature operator near the boundary
to be 
$B^\E_H = \sigma\left(\frac{\partial}{\partial r} + D^\E_H \right),$ where 
$r$ is the coordinate in the normal direction, $\sigma$ is a bundle isomorphism and 
finally, the self-adjoint elliptic operator {{$D^\E_H$ is given on 
$\Omega^{p}(\partial Y, \E)$ by }}
$$
D^\E_H = i^{m+{{p(p+1)}}} ( \nabla^{\E, H}_{\partial Y} \star - (-1)^p \star \nabla^{\E, -\overline H}_{\partial Y}).
$$
\end{lemma}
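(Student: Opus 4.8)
The plan is to reduce the computation to the untwisted case treated in \cite{APS2} by carefully tracking how the flux term $H\wedge$ interacts with the product structure near the boundary. First I would fix a collar neighborhood $Y_\epsilon\cong(-\epsilon,0]\times\partial Y$ in which the Riemannian metric and the hermitian metric on $\E$ are products, and write a general $\E$-valued form as $\omega=\omega_0+dr\wedge\omega_1$ with $\omega_0,\omega_1$ pulled back from $\partial Y$ (depending on $r$). Since $H$ is boundary-compatible, $H|_{Y_\epsilon}=p_\epsilon^*(i^*H)$ contains no $dr$ component, so the collar decomposition of $\nabla^{\E,H}=\nabla^\E+H\wedge$ splits as $\nabla^{\E,H}=dr\wedge\partial_r+\nabla^{\E,H}_{\partial Y}$, where $\nabla^{\E,H}_{\partial Y}=\nabla^\E_{\partial Y}+(i^*H)\wedge$ is the boundary twisted differential. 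This is the key simplification: boundary-compatibility guarantees that the normal derivative $\partial_r$ and the flux term do not mix.

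Next I would compute $\star_\E$ in the collar. With the product metric, $\star_\E$ on $Y$ relates to $\star_{\partial Y,\E}$ by the standard formula involving $dr\wedge$ and contraction with $\partial_r$; concretely $\star_\E(\omega_0+dr\wedge\omega_1)=(-1)^{?}\bigl(dr\wedge(\star_{\partial Y}\omega_0)\pm\star_{\partial Y}\omega_1\bigr)$ with signs depending on degrees. Plugging this together with the collar splitting of $\nabla^{\E,H}$ into the signature operator $B^\E_H=\nabla^{\E,H}+{\nabla^{\E,H}}^\dagger=\nabla^{\E,H}-\star_\E\nabla^{\E,-\overline H}\star_\E$ (using the adjoint formula established in the proof of the Corollary after Proposition~\ref{scaling}), one collects the terms containing $\partial_r$ and the purely tangential terms separately. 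The $\partial_r$ terms assemble into a bundle endomorphism $\sigma$ times $\partial_r$ (with $\sigma$ essentially $dr\wedge(\cdot)$ composed with contraction and the signature normalization, so $\sigma^2=\pm1$), and the tangential terms assemble into $\sigma$ times a first-order tangential operator; I would then verify by a direct but bookkeeping-heavy degree count that this tangential operator, acting on $\Omega^p(\partial Y,\E)$, is exactly $D^\E_H=i^{m+p(p+1)}(\nabla^{\E,H}_{\partial Y}\star-(-1)^p\star\nabla^{\E,-\overline H}_{\partial Y})$. Self-adjointness of $D^\E_H$ follows either from the relation $B^\E_H\tau=-\tau B^\E_H$ and the fact that $B^\E_H$ is formally self-adjoint, or directly from the boundary analogue of the integration-by-parts identity (now with no boundary term, since $\partial Y$ is closed).

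The main obstacle I anticipate is purely the sign/normalization bookkeeping: there are several competing conventions — the power $i^{m+p(p-1)}$ in the definition of $\tau$, the degree-dependent signs in the collar formula for $\star_\E$, the factor $(-1)^p$ distinguishing the even- and odd-degree boundary pieces, and the replacement of $H$ by $-\overline H$ forced by the adjoint. Getting all of these to conspire into the single clean formula for $D^\E_H$ (and in particular checking that the odd-degree boundary piece differs from the even-degree one exactly by the stated $(-1)^p$ and gives the ``$+$'' instead of ``$-$'' as in the untwisted APS case) is where the real work lies. A useful sanity check at the end is to set $H=0$: the formula must collapse to the classical APS boundary signature operator $B_{\mathrm{ev}}=i^{m}(\,d\star-\star d\,)$ on even forms of the boundary, which it does, so the twisted formula is the natural deformation obtained by everywhere replacing $\nabla^\E$ with $\nabla^{\E,H}$ on one side and $\nabla^{\E,-\overline H}$ on the other.
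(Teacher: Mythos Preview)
Your proposal is correct and follows essentially the same route as the paper: decompose forms in the collar as $\omega_0+dr\wedge\omega_1$, use boundary-compatibility of $H$ to split $\nabla^{\E,H}$ into a normal and a tangential part, and then read off the first-order tangential operator. The one place where the paper is more explicit than you is in the identification of $\Omega^{\pm}(Y_\epsilon,\E)$ with $\Omega^\bullet(\partial Y,\E)$: rather than describing $\sigma$ loosely as ``$dr\wedge(\cdot)$ composed with contraction and the signature normalization,'' the paper writes down concrete maps
\[
J^{\pm}(\alpha)=\alpha\pm i^{m+p(p-1)}\star_{\partial Y}\alpha\wedge dr,\qquad \alpha\in\Omega^p(\partial Y,\E),
\]
and then simply computes $D^\E_H=(J^-)^{-1}B^\E_H J^+$ from the $2\times 2$ matrix form of $B^\E_H$ on $\Omega^{even}(\partial Y,\E)\oplus\Omega^{odd}(\partial Y,\E)\wedge dr$. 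Making these $J^\pm$ explicit is exactly what tames the ``sign/normalization bookkeeping'' you flag as the main obstacle, so it would streamline your write-up; but conceptually your plan and the paper's argument coincide.
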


\begin{proof}
{{We use the identification}}
 $$
{{ \Omega^{even/odd}(Y_\epsilon, \E) \cong \Omega^{even/odd}(\partial Y, \E) \oplus 
\Omega^{odd/even}(\partial Y, \E) \wedge dr}}
$$
 {{where $Y_\epsilon \cong \partial Y \times [0, \epsilon)$
 and the variable
in $ [0, \epsilon)$ is $r$.  Under this identification, one computes the twisted signature operator $B^\E_H$ 
 to be the matrix}}
$$
B^\E_H = {\frac{\partial}{\partial r} \qquad \qquad \qquad \nabla^{\E, H}_{\partial Y} + {\nabla^{\E, H}_{\partial Y}}^\dagger
 \choose \nabla^{\E, H}_{\partial Y} + {\nabla^{\E, H}_{\partial Y}}^\dagger \qquad\qquad -\frac{\partial}{\partial r} }
$$
{{An inspection of the metric grading allows to define an identification $J^\pm$ of $\Omega^{*} (\partial Y, \E)$ with $\Omega^{\pm}(Y_\epsilon, \E)$. More precisely, we set}}
$$
{{J^\pm  (\alpha) := \alpha \pm (i)^{m+p(p-1)} \star_{\partial Y} \alpha \wedge dr, \quad \alpha\in \Omega^{p} (\partial Y, \E).}}
$$
{{Hence, the  computation of the boundary operator $D^\E_H={{(J^-)^{-1} B^\E_H J^+}}$ gives the announced formula.}}
\end{proof}
{{Notice that on $\Omega^{2h}(\partial Y, \E)$ for instance we have $
D^\E_H = i^m(-1)^{h} (\nabla^{\E, H}_{\partial Y} \star - \star \nabla^{\E, -{\overline H}}_{\partial Y} ).$ 
}}

\begin{remark}
{{The operator $D_H^\E$ is self-adjoint and preserves the parity of the forms. Moreover, if we set $T:=i^{m+p(p+1)} \star$ on $\Omega^p (\partial Y, \E)$ then
$
T^2= 1 \text { and } T\circ D_H^\E\circ  T = D_H^\E.
$ 
So, it suffices to concentrate on the restriction of $D_H^\E$ to the even forms.}}
\end{remark}

{{Let us digress now to recall the eta invariant.}}
Given a self-adjoint elliptic differential operator $A$ of order $d$ on a closed 
oriented manifold $X$ of dimension $2m-1$,
the {\em eta-function} of $A$ is
$$
\eta(s,A):=\Tr'(A|A|^{-s-1}), 
$$
where $\Tr'$ stands for the trace restricted to the subspace orthogonal to
$\ker(A)$. By \cite{APS1, APS2, APS3}, $\eta(s,A)$ 
is holomorphic when $\Re(s)>n/{{d}}$ and can
be extended meromorphically to the entire complex plane with possible simple
poles only,  and is
known to be holomorphic  at $s=0$, see for instance \cite{APS1, BF} for Dirac type operators, and the generalization to any such $A$ given in \cite{GilkeyBook}, Lemma 4.3.5. The {\em eta-invariant}
of $A$ is thus defined as
$$
\eta(A) = \eta(0, A).
$$
Replacing $A$ by $\widetilde A= A + P_A$, where $P_A$ denotes the orthogonal projection
to the nullspace of $A$, then $\widetilde A$ is a self-adjoint elliptic pseudodifferential 
operator which is invertible and we can express the {eta-function} of $A$ as
$$
\eta(s,A):=\Tr(A|\widetilde A|^{-s-1})
$$
and so the eta invariant of $A$ can {{also}} be defined as being the regularized trace
$$
\eta(A) = \Tr_Q(A |\widetilde A|^{-1})
$$
where $Q=  |\widetilde A|$ is a positive elliptic pseudodifferential 
operator which is invertible{{, cf. \cite{CDP}}}.

The eta-function is related
to the heat kernel by a Mellin transform
$$
\eta(s,A)=\frac{1}{\Gamma(\frac{s+1}{2})}\int_0^\infty t^{\frac{s-1}{2}}\Tr(Ae^{-t{\widetilde A}^2})\,dt.
$$
Then by Lemma 4.3.2 in \cite{GilkeyBook} and also \cite{APS3} one has

\begin{theorem}[\cite{BF, GilkeyBook}]\label{zeta-hol}
Let $A$ be a first order elliptic operator. Then the eta function $\eta(s,A)$ of $A$ has a meromorphic continuation 
to the complex plane with no pole at $s=0$. The eta invariant $\eta(A)$ of $A$ is then defined as $\eta(0,A)$.
\end{theorem}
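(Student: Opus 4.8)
The statement to prove is Theorem \ref{zeta-hol}: for a first-order elliptic self-adjoint operator $A$, the eta function $\eta(s,A)$ has meromorphic continuation to $\mathbb{C}$ with no pole at $s=0$. The plan is to reduce this to the cited references \cite{BF, GilkeyBook} and the general heat-kernel machinery, since the operator $D^\E_H$ appearing here differs from the classical odd signature operator only by a zeroth-order term.

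First I would set up the Mellin transform representation already recorded in the excerpt,
$$
\eta(s,A)=\frac{1}{\Gamma(\tfrac{s+1}{2})}\int_0^\infty t^{\frac{s-1}{2}}\Tr\big(Ae^{-t\widetilde A^2}\big)\,dt,
$$
and split the integral at $t=1$. The piece $\int_1^\infty$ converges absolutely for all $s$ because $\widetilde A^2$ has a positive spectral gap (having replaced $A$ by $\widetilde A=A+P_A$), so it defines an entire function; the prefactor $1/\Gamma(\tfrac{s+1}{2})$ is entire as well. Hence all the analytic subtlety lives in $\int_0^1 t^{(s-1)/2}\Tr(Ae^{-t\widetilde A^2})\,dt$, and everything comes down to the small-time asymptotic expansion of $\Tr(Ae^{-t\widetilde A^2})$.

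The key step is to invoke the standard heat-kernel asymptotics for the operator $Ae^{-t\widetilde A^2}$: since $A$ is a first-order elliptic differential operator and $\widetilde A^2$ a second-order elliptic operator with positive-definite leading symbol, one has an asymptotic expansion $\Tr(Ae^{-t\widetilde A^2})\sim\sum_{j\ge 0} a_j\, t^{(j-n)/2}$ as $t\to 0^+$, where $n=\dim X=2m-1$. This is precisely Lemma 4.3.2 of \cite{GilkeyBook} (and the Bismut--Freed treatment \cite{BF} in the Dirac case). Feeding this expansion term by term into $\int_0^1 t^{(s-1)/2}(\cdots)\,dt$ produces the meromorphic continuation, with simple poles located at $s=(n-j)$ for $j=0,1,2,\dots$ (including possibly $s=0$ when $j=n$). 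To rule out a pole at $s=0$ one uses the extra input, also due to Gilkey (Lemma 4.3.5 of \cite{GilkeyBook}) and in the geometric case to \cite{APS1, APS3}, that the local heat coefficient $a_n$ — the coefficient of $t^0$ in the expansion, whose pointwise value would be the residue at $s=0$ after multiplication by the zero $1/\Gamma(\tfrac{s+1}{2})$ — in fact the factor $1/\Gamma(\tfrac{s+1}{2})$ has a simple zero at $s=-1$ but not at $s=0$, so the relevant cancellation is the vanishing of the would-be residue; more precisely Gilkey's argument shows the local term $a_n(x)$ integrates to something that does not obstruct holomorphy at $s=0$ because of a symmetry/parity argument specific to $\Tr(Ae^{-t\widetilde A^2})$ with $A$ odd. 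I would simply cite this: the regularity of $\eta(s,A)$ at $s=0$ for arbitrary first-order self-adjoint elliptic $A$ is exactly the content of \cite[Lemma 4.3.5]{GilkeyBook}, generalizing \cite{APS1, APS2, APS3}.

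The main obstacle, and the only place where genuine work (as opposed to citation) could be required, is confirming that $D^\E_H$ genuinely falls in the class covered by Gilkey's lemma — that is, that it is a first-order elliptic formally self-adjoint differential operator on a closed manifold. Ellipticity and self-adjointness were established in Lemma \ref{lem:bdry-iden} and the following Remark (where $T^2=1$ and $TD^\E_H T=D^\E_H$, and $D^\E_H$ is identified as the boundary part of the twisted signature operator $B^\E_H$), and being a differential operator is clear since $\nabla^{\E,H}$ and $\star$ are differential/algebraic. Once this membership is checked, Theorem \ref{zeta-hol} is an immediate application of the quoted results, and no further computation is needed; I would state the theorem as a direct consequence of \cite[Lemmas 4.3.2 and 4.3.5]{GilkeyBook} together with \cite{BF} for the Dirac-type prototype.
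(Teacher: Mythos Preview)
The paper does not give a proof of this theorem at all: it is stated purely as a citation, introduced by the line ``Then by Lemma~4.3.2 in \cite{GilkeyBook} and also \cite{APS3} one has'', and attributed in the theorem header to \cite{BF, GilkeyBook}. Your proposal is fully consistent with this---you simply unpack the standard Mellin-transform/heat-kernel mechanism a bit further before invoking the same Lemmas~4.3.2 and~4.3.5 of \cite{GilkeyBook}, so there is nothing to compare beyond noting that you supply more detail than the paper itself.

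One small wobble worth cleaning up: your aside about $1/\Gamma\big(\tfrac{s+1}{2}\big)$ is muddled. That factor has its zeros at $s=-1,-3,-5,\dots$ and is \emph{nonzero} at $s=0$ (indeed $\Gamma(1/2)=\sqrt{\pi}$), so it cannot cancel any pole of the $\int_0^1$ piece at $s=0$. The regularity at $s=0$ genuinely rests on the vanishing of the relevant local coefficient, which is exactly the content of \cite[Lemma~4.3.5]{GilkeyBook} (and, for Dirac-type operators, of \cite{APS1,BF}); you do land on that citation, so your conclusion is correct even if the intermediate sentence about the gamma factor should be deleted.
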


Let 
$H = \sum_{j\geq 1} i^{j+1} H_{2j+1} $ an 
odd-degree closed differential form on $X$ where $H_{2j+1}$ is a real-valued differential form 
of degree ${2j+1}$. Denote by $\E$ a hermitian flat vector bundle over $X$ with the 
canonical flat connection $\nabla^\E$.
Consider the twisted odd signature operator 
${{D^\E_H = i^m (-1)^{h} (\nabla^{\E, H} \star -  \star \nabla^{\E, -\overline H})}}$ on 
$\Omega^{2h}(X, \E)$, acting on even degree forms. Then $D^\E_H$ is a self-adjoint
elliptic operator 
and let $\eta(D^\E_H )$ denote its eta invariant. 

\begin{definition}\label{DefRho}
The {\em twisted rho invariant} 
$\rho(X, \E, H)$ is defined to be, $\eta(D^\E_H ) - \Rank(\E)\, \eta(D_H )$, where 
$D_H$ is the odd signature operator on $\Omega^{even}(X)$ {{corresponding to $\E$ replaced by the trivial line bundle}}.
\end{definition}

Although the eta invariant  $\eta(D^\E_H )$  is only a spectral invariant, 
the twisted rho invariant $\rho(X,\E,H)$ is independent of the
choice of the Riemannian metric on $X$ and the Hermitian metric on $\E$.
The twisted rho invariant $\rho(X,\E,H)$ is also invariant if $H$ is deformed within its cohomology class.
These results will be established in section \ref{sect:var}.

\subsection{The twisted signature formula}
\label{sect:twistedsig}

The goal of this section is to prove the analogue of the Atiyah-Patodi-Singer index theorem for the 
twisted signature complex with non-local boundary conditions. The central result, Corollary \ref{cor:sig},
is one of the key tools that will be used to prove our results on the eta and rho invariants for the twisted odd signature
operator in the following section.

$D^\E_H$ is an elliptic self-adjoint operator, and 
by \cite{APS3}, the non-local boundary condition given by $P^+(s\big|_{\partial Y})=0$
where $P^+$ denotes the orthogonal projection onto the eigenspaces with positive eigenvalues,
then the pair $(B^\E_H; P^+)$ is an elliptic boundary value problem. 
By the Atiyah-Patodi-Singer index theorem 3.10 \cite{APS1} and its extension in \cite{GilkeyBook}, we have

\begin{proposition}\label{prop:sig1}
In the notation of Lemma \ref{lem:bdry-iden} above,
$$
\Index(B^\E_H; P^+) = \Rank(\E) \int_Y \alpha_0(x)dx - \left(\dim(\ker(D^\E_H))
+\eta(D^\E_H)\right),
$$
where $\eta(D^\E_H)$ denotes the eta invariant of the odd signature operator $D^\E_H$ acting on even degree
forms on the boundary. Here $ \Rank(\E) \alpha_0(x)$ is the constant term in the asymptotic expansion (as $t\to 0^+$) 
of the pointwise supertrace, $\tr_s(\exp(-t{B^\E_H}^2)(x, x))$.
\end{proposition}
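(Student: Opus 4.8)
The plan is to deduce Proposition~\ref{prop:sig1} as a direct application of the Atiyah--Patodi--Singer index theorem for first order elliptic operators of product type near the boundary, namely Theorem~3.10 of \cite{APS1} in the form extended to such operators in \cite{GilkeyBook}, applied to the twisted signature operator $B^\E_H$ between the $\pm 1$-eigenbundles of the involution $\tau$, in the notation of Lemma~\ref{lem:bdry-iden}. First one checks the hypotheses. The operator $B^\E_H=\nabla^{\E,H}+{\nabla^{\E,H}}^\dagger$ is first order elliptic, since its principal symbol is that of a twisted signature operator and the added term $H\wedge\,\cdot\,$ is of order zero, affecting neither ellipticity nor the fact that $(B^\E_H)^2$ is of Laplace type. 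By Lemma~\ref{lem:bdry.compatible} we may and do assume that $H$ is boundary compatible, and by hypothesis the Riemannian metric on $Y$ and the hermitian metric on $\E$ are of product type near $\partial Y$; Lemma~\ref{lem:bdry-iden} then furnishes, on a collar $Y_\epsilon$, the normal form $B^\E_H=\sigma\bigl(\frac{\partial}{\partial r}+D^\E_H\bigr)$ with $\sigma$ a bundle isomorphism and $D^\E_H$ the self-adjoint elliptic tangential operator on $\Omega^\bullet(\partial Y,\E)$, whose eta function is regular at $s=0$ by Theorem~\ref{zeta-hol}. Hence $(B^\E_H;P^+)$ is an elliptic boundary value problem and the cited index theorem gives
$$
\Index(B^\E_H;P^+)=\int_Y\beta_0(x)\,dx-\frac{\dim\ker(\widehat D^\E_H)+\eta(\widehat D^\E_H)}{2},
$$
where $\widehat D^\E_H$ denotes the tangential operator acting on the full space $\Omega^\bullet(\partial Y,\E)$ of all $\E$-valued forms on $\partial Y$, and $\beta_0(x)$ is the constant term, as $t\to 0^+$, in the asymptotic expansion of $\tr_s(\exp(-t(B^\E_H)^2)(x,x))$.

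Two routine reductions then bring this into the stated shape. Since $\E$ is flat it is locally trivial with the trivial flat connection, so the pointwise supertrace of the heat kernel of $(B^\E_H)^2$ equals $\Rank(\E)$ times the corresponding quantity for the signature operator coupled to the trivial line bundle; writing its constant term as $\Rank(\E)\,\alpha_0(x)$ --- which is precisely the normalization adopted in the statement --- produces the interior term $\Rank(\E)\int_Y\alpha_0(x)\,dx$. Next one invokes the symmetry recorded in the Remark following Lemma~\ref{lem:bdry-iden}: the operator $T=i^{m+p(p+1)}\star$ on $\Omega^p(\partial Y,\E)$ satisfies $T^2=1$ and commutes with $D^\E_H$, and since $\partial Y$ is odd dimensional, $\star$ interchanges forms of even and odd degree, so $T$ restricts to an isomorphism $\Omega\even(\partial Y,\E)\to\Omega\odd(\partial Y,\E)$ conjugating the restriction of $D^\E_H$ to even forms into its restriction to odd forms. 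Consequently these two restrictions have the same spectrum with multiplicities, whence $\dim\ker(\widehat D^\E_H)=2\dim\ker(D^\E_H)$ and $\eta(\widehat D^\E_H)=2\eta(D^\E_H)$, where on the right $D^\E_H$ denotes, as throughout this section, its restriction to even degree forms. Substituting these two identities gives exactly
$$
\Index(B^\E_H;P^+)=\Rank(\E)\int_Y\alpha_0(x)\,dx-\bigl(\dim\ker(D^\E_H)+\eta(D^\E_H)\bigr),
$$
which is the assertion.

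The only point demanding care is checking that the hypotheses of the generalized Atiyah--Patodi--Singer theorem genuinely hold for the twisted operator: one needs $B^\E_H$ to be, near the boundary, exactly of the form $\sigma\bigl(\frac{\partial}{\partial r}+A\bigr)$ with $A$ self-adjoint elliptic on $\partial Y$ and no residual lower order boundary terms. This is precisely where Lemma~\ref{lem:bdry-iden} does the essential work, and it is the reason for arranging $H$ to be boundary compatible: with $H$ pulled back from $\partial Y$ throughout the collar, the zeroth order term $H\wedge\,\cdot\,$ is itself in product form and $B^\E_H$ splits cleanly as above, whereas for a general closed $H$ one would have to carry along additional $r$-dependent boundary contributions. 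Once this normal form is in hand, the flatness reduction of the interior density and the factor two parity doubling via $T$ are purely formal, and the heat kernel proof of the index theorem in \cite{GilkeyBook} applies verbatim to the Laplace type operator $(B^\E_H)^2$ with its product boundary structure. I therefore expect no genuine obstacle beyond this bookkeeping.
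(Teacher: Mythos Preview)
Your proof is correct and follows the paper's own approach: the proposition is stated there as a direct consequence of the Atiyah--Patodi--Singer theorem (Theorem~3.10 of \cite{APS1}, in the extended form of \cite{GilkeyBook}), and you simply supply the details the paper leaves implicit, in particular the factor-of-two reduction from the full tangential operator $\widehat D^\E_H$ to its restriction to even forms via the involution $T$ of the Remark after Lemma~\ref{lem:bdry-iden}. One small caveat: your sentence asserting that the pointwise supertrace of the heat kernel equals $\Rank(\E)$ times the scalar one is stronger than what the proposition requires---in the statement $\Rank(\E)\,\alpha_0(x)$ is merely the \emph{name} given to the constant term---and, taken literally for a hermitian metric on $\E$ that is not parallel for $\nabla^\E$, this factorization is not a local triviality argument but needs the separate treatment the paper gives in the Appendix (Corollary~\ref{cor:indexcoeff}); this, however, plays no role in the proof of the proposition itself.
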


\begin{remark}\label{rem:sig1} The precise form of $\alpha_0(x)$ is unknown for general $H$. However in the case when $H=0$, the local index theorem cf. \cite{BGV} establishes that the Atiyah-Hirzebruch
$\LL$-polynomial applied to the curvature of the Levi-Civita connection, wedged by the Chern character of the flat bundle 
$\E$, is equal to
 $\alpha_0(x)$ is equal to  times the rank of $\E$.
   In the case when degree of $H$ is equal to 3, it  follows from a result of Bismut \cite{Bismut} that the Atiyah-Hirzebruch
$\LL$-polynomial applied to the curvature of a Riemannian connection defined in terms of the 
Levi-Civita connection together with a torsion tensor determined by $H$, wedged by the Chern character of the flat bundle 
$\E$,  is equal to
 $\alpha_0(x)$ times the rank of $\E$. The proof that in general, one also gets
  $\alpha_0(x)$ times the rank of $\E$ is contained in the Appendix to this paper, and in particular,
Corollary
\ref{cor:indexcoeff}.
\end{remark}

Let $\hat Y = Y \cup \partial Y \times (-\infty, 0]$ be the original manifold $Y$ with a cylinder attached, 
and endowed with the natural smooth manifold structure, as schematically depicted below.

\begin{figure}[h]
\includegraphics[height=3.5in]{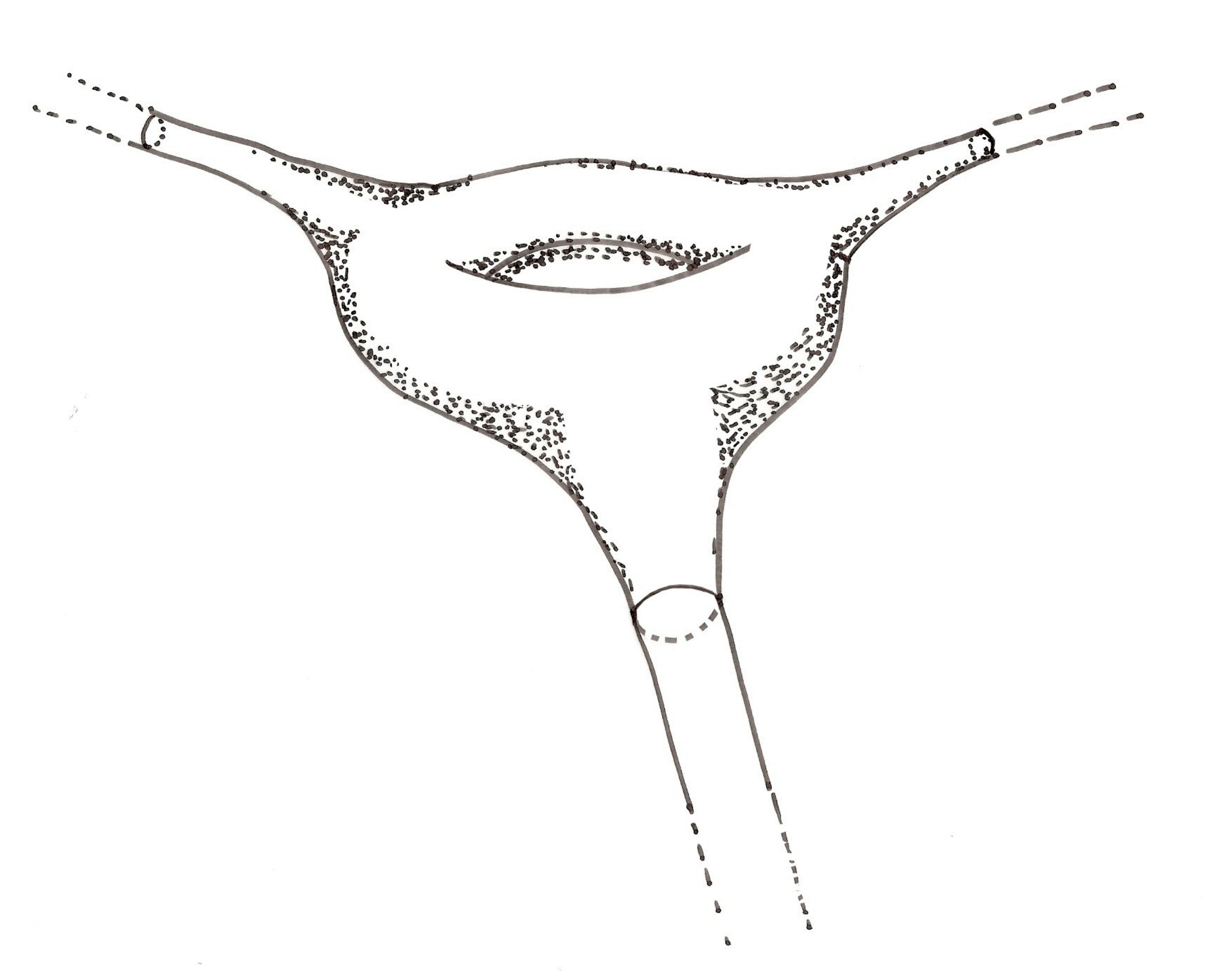}\\
\caption{$\hat Y$}
\label{fig:cylinder}
\end{figure}

The metric on the cylinder $ \partial Y \times (-\infty, 0]$ is $g_{\partial Y} + dr^2$, which is glued smoothly to the 
metric on $Y$. The bundle $\E$ is extended by pullback to the cylinder $ \partial Y \times (-\infty, 0]$ and denoted
$\hat\E$, together 
with the hermitian metric. {{Since the closed odd form $H$ is boundary-compatible, it is also smoothly extended to a closed bounded odd degree form $\hat H$ on $\hat Y$. }}
Besides $L^2$ sections of $\hat \E$-valued differential forms on $\hat Y$, we will also
be interested in {\em extended}  $L^2$ sections of $\hat \E$-valued differential forms on $\hat Y$, by which we
mean a form which is locally in $L^2$ and such that for large negative $r$, 
$$
f(y, r) = g(y, r) + f_\infty(y)\qquad \qquad y \in \hat Y
$$
where $g$ is in $L^2$ and $f_\infty \in \ker(D^\E_H)$. That is, $f$ has $f_\infty$ as asymptotic value as $r\to-\infty$. 

The following is a special case of  Proposition (3.11) in \cite{APS1}. 

\begin{proposition}\label{prop:sig2}
$(B^\E_H; P^-)$ is the adjoint of $(B^\E_H; P^+)$,  where $P^-$ denotes the orthogonal projection onto the 
eigenspaces with negative eigenvalues, and 
\begin{enumerate}
\item $\ker(B^\E_H; P^+)$ is isomorphic to the space of $L^2$-solutions of ${{B^{\hat\E}_{\hat H}}} s=0$ on $\hat Y$;
\item $\ker(B^\E_H; P^-)$ is isomorphic to the space of extended $L^2$-solutions of ${{{B^{\hat\E}_{\hat H}}}}^\dagger s=0$ on $\hat Y$.
\end{enumerate}
\end{proposition}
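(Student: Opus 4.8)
The plan is to follow the strategy of Proposition (3.11) in \cite{APS1} essentially verbatim, since our operator $B^\E_H = \sigma(\partial_r + D^\E_H)$ has exactly the same structure near the boundary as the operators treated there: $\sigma$ is a bundle isomorphism, $D^\E_H$ is a first-order self-adjoint elliptic operator on $\partial Y$ (Lemma \ref{lem:bdry-iden} and the following remark), and on the attached cylinder $\partial Y \times (-\infty,0]$ the extended operator $B^{\hat\E}_{\hat H}$ has translation-invariant coefficients in the $r$-direction because $\hat H$ is boundary-compatible (so $\hat H|_{\text{cyl}} = p_\epsilon^* i^*H$ has no $dr$-component and is $r$-independent) and the metric is a product. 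This last point is the only genuinely ``twisted'' input and it is where boundary-compatibility of $H$ earns its keep: it guarantees that separation of variables works on the cylinder, i.e. that a solution of $B^{\hat\E}_{\hat H}s=0$ decomposes spectrally with respect to $D^\E_H$ into modes $e^{-\lambda r}\phi_\lambda$, exactly as in the untwisted case.

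First I would record the adjointness statement: integration by parts on $Y$ for the operator $B^\E_H$, using the product structure near $\partial Y$ and the explicit form $B^\E_H = \sigma(\partial_r + D^\E_H)$, produces a boundary term involving $\sigma$ and the restrictions $s|_{\partial Y}$, $s'|_{\partial Y}$; imposing $P^+(s|_{\partial Y})=0$ on the domain of $(B^\E_H;P^+)$ forces the formal adjoint to have domain characterized by $P^-(s'|_{\partial Y})=0$ (the complementary projection picks up because $\sigma$ anti-commutes appropriately with the grading, just as in \cite{APS1}). This is the same computation as in the classical signature case and requires no new ideas.

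Next, for part (1): a solution $s$ of $(B^\E_H;P^+)s=0$ extends to $\hat Y$ by solving $B^{\hat\E}_{\hat H}s=0$ on the cylinder with the given boundary value; separation of variables on the cylinder writes the solution as a sum $\sum_{\lambda} e^{-\lambda r}\phi_\lambda$ over the spectrum of $D^\E_H$, and the boundary condition $P^+(s|_{\partial Y})=0$ is precisely the condition that only the modes with $\lambda > 0$ — equivalently the exponentially decaying ones as $r\to-\infty$ — survive, together with the $\lambda=0$ (kernel) modes; one then checks that the $\lambda = 0$ part must vanish for an $L^2$ solution, so $s$ is $L^2$ on $\hat Y$. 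Conversely, restricting an $L^2$ solution on $\hat Y$ back to $Y$ gives a solution of $(B^\E_H;P^+)s=0$ because $L^2$-ness on the cylinder kills all non-decaying modes, hence kills $P^-(s|_{\partial Y})$ and the kernel part, leaving $P^+(s|_{\partial Y})=0$. For part (2) the argument is identical with $P^+$ replaced by $P^-$: the boundary condition now admits the $\lambda \le 0$ modes, so the $\lambda = 0$ modes are allowed, which is exactly what produces an \emph{extended} rather than genuinely $L^2$ solution, with asymptotic value $f_\infty \in \ker(D^\E_H)$; and the relevant operator is $(B^{\hat\E}_{\hat H})^\dagger$ by the adjointness just established.

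The main obstacle — really the only point needing care — is verifying that $B^{\hat\E}_{\hat H}$ genuinely has $r$-independent coefficients on the cylinder and that the decomposition $B^\E_H = \sigma(\partial_r + D^\E_H)$ of Lemma \ref{lem:bdry-iden} is valid not just to leading order but exactly in the collar, so that the spectral/separation-of-variables analysis of \cite{APS1} applies without modification. Once that normal-form is in hand, every step above is a direct transcription of the Atiyah--Patodi--Singer argument, with $\ker(D^\E_H)$ playing the role of the harmonic forms on the boundary; I would simply cite Proposition (3.11) of \cite{APS1} for the details of the cylinder analysis and indicate the (trivial) modifications.
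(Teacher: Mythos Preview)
Your proposal is correct and matches the paper's own treatment exactly: the paper gives no proof beyond the sentence ``The following is a special case of Proposition~(3.11) in \cite{APS1},'' relying implicitly (as you make explicit) on Lemma~\ref{lem:bdry-iden} and the boundary-compatibility of $H$ to put $B^{\hat\E}_{\hat H}$ into the APS normal form $\sigma(\partial_r+D^\E_H)$ with $r$-independent coefficients on the cylinder. One minor slip in your sketch: with the cylinder extending to $r\to-\infty$, the mode $e^{-\lambda r}\phi_\lambda$ decays iff $\lambda<0$, so the condition $P^+(s|_{\partial Y})=0$ leaves the $\lambda\le 0$ modes (not the $\lambda>0$ ones) surviving---the structure of the argument is otherwise sound.
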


By Corollary (3.14) of \cite{APS1}, we have

\begin{proposition}\label{prop:sig3}
$\Index(B^\E_H; P^+) = h^+(\hat\E, \hat H) - h^-(\hat\E, \hat H) - h_\infty(\E)$, where $h^+(\hat\E, \hat H) $
is the dimension of the space of $L^2$-solutions of $({{B^{\hat \E}_{\hat H})}}^+ s=0$ on $\hat Y$, $h^-(\hat\E, \hat H) $
is the dimension of the space of extended $L^2$-solutions of $({{B^{\hat \E}_{\hat H})}}^- s=0$ on $\hat Y$, $h_\infty(\E)$
is the dimension of the nullspace of $D^\E_H$ consisting of limiting values of extended $L^2$-sections satisfying the adjoint equation. 
\end{proposition}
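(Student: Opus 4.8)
\emph{Proof strategy.} The plan is to obtain Proposition~\ref{prop:sig3} as the instance, for the operator $B^\E_H$, of Corollary~(3.14) of \cite{APS1}, combined with Proposition~\ref{prop:sig2}. The only thing that genuinely needs checking is that $B^\E_H$ and its extension $B^{\hat\E}_{\hat H}$ to $\hat Y$ fall within the scope of the Atiyah--Patodi--Singer analysis. First I would note that by Lemma~\ref{lem:bdry-iden}, in the collar $\partial Y\times[0,\epsilon)$ the operator $B^\E_H$ has the APS normal form $\sigma\bigl(\tfrac{\partial}{\partial r}+D^\E_H\bigr)$, where $\sigma$ is a bundle isomorphism and $D^\E_H$ is a self-adjoint elliptic first order operator on $\partial Y$; and that, because $\hat H$ is a smooth, closed, bounded, translation-invariant odd form on the cylinder, $B^{\hat\E}_{\hat H}$ is a first order elliptic operator on $\hat Y$ whose restriction to the cylindrical end is precisely the constant-coefficient operator $\sigma\bigl(\tfrac{\partial}{\partial r}+D^\E_H\bigr)$. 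Hence the $L^2$- and extended-$L^2$-solution theory of \cite[\S 3]{APS1} applies verbatim.

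Next I would carry out the index bookkeeping. Since $(B^\E_H;P^+)$ is an elliptic boundary value problem, $\Index(B^\E_H;P^+)=\dim\ker(B^\E_H;P^+)-\dim\operatorname{coker}(B^\E_H;P^+)$. By the first assertion of Proposition~\ref{prop:sig2}, $(B^\E_H;P^-)$ is the adjoint boundary value problem, so $\operatorname{coker}(B^\E_H;P^+)\cong\ker(B^\E_H;P^-)$. By Proposition~\ref{prop:sig2}(1), $\dim\ker(B^\E_H;P^+)$ equals the dimension $h^+(\hat\E,\hat H)$ of the space of $L^2$-solutions of $(B^{\hat\E}_{\hat H})^+s=0$ on $\hat Y$. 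By Proposition~\ref{prop:sig2}(2), $\ker(B^\E_H;P^-)$ is isomorphic to the space of extended $L^2$-solutions of $(B^{\hat\E}_{\hat H})^-s=0$ on $\hat Y$, which contains the space of $L^2$-solutions of $(B^{\hat\E}_{\hat H})^-s=0$, of dimension $h^-(\hat\E,\hat H)$, as a subspace.

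The last step is to separate the genuine $L^2$-part from the asymptotic-value part in this extended-$L^2$ space. Sending an extended $L^2$-solution $f$ of $(B^{\hat\E}_{\hat H})^-f=0$ to its limiting value $f_\infty\in\ker(D^\E_H)$ as $r\to-\infty$ produces an exact sequence
\[
0\longrightarrow\{L^2\text{-solutions}\}\longrightarrow\{\text{extended }L^2\text{-solutions}\}\longrightarrow\{\text{limiting values}\}\longrightarrow 0,
\]
whose kernel is exactly the space of $L^2$-solutions of $(B^{\hat\E}_{\hat H})^-s=0$ and whose image is, by definition, the subspace of $\ker(D^\E_H)$ of limiting values of extended $L^2$-solutions of the adjoint equation, of dimension $h_\infty(\E)$. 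Taking dimensions gives $\dim\ker(B^\E_H;P^-)=h^-(\hat\E,\hat H)+h_\infty(\E)$, and feeding this into the identity of the previous paragraph yields
\[
\Index(B^\E_H;P^+)=h^+(\hat\E,\hat H)-h^-(\hat\E,\hat H)-h_\infty(\E),
\]
as claimed.

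I expect the delicate step to be this final bookkeeping between $L^2$- and extended-$L^2$-solutions, i.e.\ the appearance of the correction term $h_\infty(\E)$. Concretely, one must be sure that the adjoint problem $(B^\E_H;P^-)$ genuinely detects the \emph{extended} solutions on $\hat Y$ and not merely the $L^2$ ones, that the asymptotic value $f_\infty$ lies in $\ker(D^\E_H)$ and depends linearly on $f$, and that no zero mode of $D^\E_H$ gets double-counted. All of this is supplied by \cite[\S 3]{APS1}; the point here is only that the normal form of Lemma~\ref{lem:bdry-iden} together with the boundary-compatibility of $\hat H$ place us exactly in that framework, so that the conclusion transfers with no change.
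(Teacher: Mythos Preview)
Your argument is correct and is exactly how the paper proceeds: Proposition~\ref{prop:sig3} is stated there with no proof beyond the sentence ``By Corollary~(3.14) of \cite{APS1}, one has\ldots'', and you have simply unpacked that citation via Proposition~\ref{prop:sig2} (which is \cite[Prop.~(3.11)]{APS1}) together with the standard short exact sequence relating $L^2$- and extended $L^2$-solutions.

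One small point worth flagging: in your bookkeeping you take $h^-(\hat\E,\hat H)$ to be the dimension of the space of \emph{genuine} $L^2$-solutions of $(B^{\hat\E}_{\hat H})^- s=0$, which is what is needed for the identity $\Index = h^+ - h^- - h_\infty$ to hold and is exactly what \cite[Cor.~(3.14)]{APS1} says. The paper's statement of Proposition~\ref{prop:sig3}, however, describes $h^-(\hat\E,\hat H)$ as the dimension of the space of \emph{extended} $L^2$-solutions; with that reading the formula would overcount by $h_\infty$. This appears to be a slip in the paper's wording, and your interpretation is the correct one.
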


By Proposition (3.15) of \cite{APS1}, one has,

\begin{proposition}\label{prop:sig4}
The $L^2$-solutions of $({{B^{\hat \E}_{\hat H}}})^+$ and {{$(B^{\hat \E}_{\hat H}){{^-}} (B^{\hat \E}_{\hat H}){{^+}}$}} coincide on $\hat Y$,  and the same is true 
for the extended $L^2$-solutions. There is a similar coincidence when {{ $(B^{\hat \E}_{\hat H})^+$ replaced by $(B^{\hat \E}_{\hat H})^-$}}.
\end{proposition}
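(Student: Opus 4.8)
The plan is to carry over to this twisted situation the argument of Proposition (3.15) of \cite{APS1}. It applies here because Lemma \ref{lem:bdry-iden} puts $B^{\hat\E}_{\hat H}$ into the product normal form $\sigma\bigl(\tfrac{\partial}{\partial r}+D^\E_H\bigr)$ on the cylindrical end $\partial Y\times(-\infty,0]$, the metric being of product type and the form $\hat H$ boundary--compatible there. Since $B^{\hat\E}_{\hat H}$ anti--commutes with $\tau$, it is off--diagonal in the splitting $\Omega_+(\hat Y,\hat\E)\oplus\Omega_-(\hat Y,\hat\E)$, with the two blocks $(B^{\hat\E}_{\hat H})^{\pm}$ mutual formal adjoints, and $(B^{\hat\E}_{\hat H})^-(B^{\hat\E}_{\hat H})^+$ is the restriction of $(B^{\hat\E}_{\hat H})^2$ to $\Omega_+(\hat Y,\hat\E)$. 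The inclusion ``$(B^{\hat\E}_{\hat H})^+s=0\Rightarrow(B^{\hat\E}_{\hat H})^-(B^{\hat\E}_{\hat H})^+s=0$'' is immediate for both $L^2$ and extended $L^2$ sections, and the variant with $(B^{\hat\E}_{\hat H})^+$ and $(B^{\hat\E}_{\hat H})^-$ interchanged is proved identically (they are mutual formal adjoints); so the entire content is the reverse inclusion.

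Suppose then that $s$ is an $L^2$ (resp.\ extended $L^2$) section with $(B^{\hat\E}_{\hat H})^-(B^{\hat\E}_{\hat H})^+s=0$, and set $t:=(B^{\hat\E}_{\hat H})^+s$; I want to show $t=0$. Interior elliptic regularity for the second order elliptic operator $(B^{\hat\E}_{\hat H})^-(B^{\hat\E}_{\hat H})^+$ makes $s$, hence $t$, smooth on $\hat Y$. On the cylinder, where (by the standard computation from the product normal form) $(B^{\hat\E}_{\hat H})^2=-\tfrac{\partial^2}{\partial r^2}+(D^\E_H)^2$ has $r$--independent coefficients, I expand $s(\cdot,r)$ in an eigenbasis $\{\phi_\mu\}$ of $D^\E_H$; the equation becomes $f_\mu''=\mu^2 f_\mu$, and being $L^2$ (resp.\ extended $L^2$) as $r\to-\infty$ forces $f_\mu(r)=c_\mu e^{|\mu|r}$ for $\mu\neq0$ and $f_\mu$ constant for $\mu=0$, the constant vanishing in the genuine $L^2$ case. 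Thus on the cylinder $s(y,r)=s_\infty(y)+O(e^{\delta r})$ as $r\to-\infty$, with $s_\infty\in\ker D^\E_H$ (and $s_\infty=0$ if $s\in L^2$) and $\delta>0$ the least nonzero $|\mu|$. Since $D^\E_H s_\infty=0=\partial_r s_\infty$, we get $t=\sigma\bigl(\tfrac{\partial}{\partial r}+D^\E_H\bigr)(s-s_\infty)=O(e^{\delta r})$ on the cylinder, so in particular $t\in L^2(\hat Y)$.

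Finally I apply Green's formula on the truncated manifold $\hat Y_T:=Y\cup\bigl(\partial Y\times[-T,0]\bigr)$, which is compact with boundary $\{r=-T\}\cong\partial Y$. Because $(B^{\hat\E}_{\hat H})^-$ is the formal adjoint of $(B^{\hat\E}_{\hat H})^+$ and $(B^{\hat\E}_{\hat H})^-t=(B^{\hat\E}_{\hat H})^-(B^{\hat\E}_{\hat H})^+s=0$,
$$
\int_{\hat Y_T}|t|^2=\int_{\hat Y_T}\langle(B^{\hat\E}_{\hat H})^+s,\,t\rangle=\int_{\hat Y_T}\langle s,\,(B^{\hat\E}_{\hat H})^-t\rangle+c\!\int_{\{r=-T\}}\!\langle\sigma\, s,\,t\rangle=c\!\int_{\{r=-T\}}\!\langle\sigma\, s,\,t\rangle,
$$
the boundary term arising from the $\partial/\partial r$ part of $B^{\hat\E}_{\hat H}$ (with $c$ a universal constant). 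By the asymptotics above $s$ is bounded and $t=O(e^{-\delta T})$ on $\{r=-T\}$, so the right--hand side is $O(e^{-\delta T})\to0$ as $T\to\infty$; hence $\int_{\hat Y}|t|^2=0$, i.e.\ $t=(B^{\hat\E}_{\hat H})^+s=0$, as claimed.

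The only substantive point, and hence the main potential obstacle, is the cylinder analysis of the second paragraph: one needs the exponential decay of $s$ (a fortiori of $t$) along the end, with at worst a $\ker D^\E_H$ limiting value in the extended case, in order to kill the boundary term at $r=-T$. This is exactly the separation--of--variables input underlying Proposition \ref{prop:sig2} (APS (3.10)--(3.12)), now legitimately available thanks to the product normal form of Lemma \ref{lem:bdry-iden}; everything else is formal integration by parts.
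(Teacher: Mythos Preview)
Your argument is correct and is precisely the proof of Proposition~(3.15) in \cite{APS1}, transported to the twisted setting via the product normal form of Lemma~\ref{lem:bdry-iden}. The paper itself offers no independent proof of this proposition: it simply records the statement with the preamble ``By Proposition~(3.15) of \cite{APS1}, one has,'' so your write-up is in fact a faithful expansion of what the authors take for granted rather than an alternative route.
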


We will soon give topological interpretations of  $h^\pm(\hat\E, \hat H)$  and $h_\infty(\E)$. {{Recall that $H$ is assumed to be boundary-compatible.}}

\begin{proposition}\label{prop:sig5}
The space ${\mathcal H}^\bullet (\hat Y, \hat \E, \hat H)$ of twisted $\hat \E$-valued $L^2$ harmonic forms on $\hat Y $
is naturally isomorphic to the image ${\hat H} (\hat Y, \hat \E, \hat H)$ of the compactly supported cohomology $H^\bullet_c(\hat Y, \hat \E, \hat H)$ inside 
the cohomology $H^\bullet(\hat Y, \hat \E, \hat H)$, or equivalently, the image of the relative cohomology
 $H^\bullet(Y, \partial Y, \E, H)$ inside the absolute cohomology $H^\bullet(Y,  \E, H)$.
\end{proposition}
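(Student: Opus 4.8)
<br>

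The plan is to follow the classical Atiyah--Patodi--Singer Hodge-theoretic argument on manifolds with cylindrical ends (\cite{APS1}, \S3), adapted to the twisted differential $\nabla^{\hat\E,\hat H}$, exploiting that $\hat H$ is boundary-compatible and hence translation-invariant on the cylinder $\partial Y\times(-\infty,0]$. \textbf{Step 1 (product structure and exponential decay).} By Lemma \ref{lem:bdry-iden} the twisted signature operator on the end is $B^{\hat\E}_{\hat H}=\sigma(\partial_r+D^\E_H)$ with $D^\E_H$ self-adjoint elliptic on the closed manifold $\partial Y$, so it has discrete spectrum and a spectral gap around $0$. Expanding an $L^2$ solution of $B^{\hat\E}_{\hat H}f=0$ on the cylinder in eigensections $\phi_\lambda$ of $D^\E_H$ gives $f=\sum_\lambda c_\lambda e^{-r\lambda}\phi_\lambda$; the $L^2$ condition on $(-\infty,0]$ retains only the exponentially decaying modes, while an extended $L^2$ solution in addition carries one $\ker D^\E_H$-component, its limiting value $f_\infty$. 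Because a genuine $L^2$ harmonic form decays exponentially, integration by parts on $\hat Y$ is justified, so $\Delta^{\hat\E}_{\hat H}\omega=0$ with $\omega\in L^2$ is equivalent to $\nabla^{\hat\E,\hat H}\omega=0$ and $(\nabla^{\hat\E,\hat H})^\dagger\omega=0$; in particular $\omega$ is closed, giving a natural map $\pi\colon{\mathcal H}^\bullet(\hat Y,\hat\E,\hat H)\to H^\bullet(\hat Y,\hat\E,\hat H)$, $\omega\mapsto[\omega]$, and $\star_{\hat\E}\omega$ represents a class in the Poincar\'e-dual twisted cohomology.

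\textbf{Step 2 ($L^2$ Hodge theory).} Since $H\wedge$ is a zeroth-order perturbation, the cylindrical-end analysis (closedness of the range of $\Delta^{\hat\E}_{\hat H}$ on its maximal domain, from the spectral gap of $D^\E_H$) carries over, yielding the strong $L^2$ Hodge decomposition $\Omega^\bullet_{L^2}(\hat Y,\hat\E)={\mathcal H}^\bullet_{L^2}\oplus\overline{\im\nabla^{\hat\E,\hat H}}\oplus\overline{\im(\nabla^{\hat\E,\hat H})^\dagger}$, hence a natural map $p\colon H^\bullet_c(\hat Y,\hat\E,\hat H)\to{\mathcal H}^\bullet(\hat Y,\hat\E,\hat H)$, projecting a compactly supported closed representative onto its harmonic part. \textbf{Step 3 ($\im\pi=\im(H^\bullet_c\to H^\bullet)$).} Three sub-points: (i) $\pi\circ p$ is the canonical map $H^\bullet_c\to H^\bullet$, since for compactly supported closed $\alpha$ the difference $\alpha$ minus its harmonic part lies in $\overline{\im\nabla^{\hat\E,\hat H}}$ and, by exponential decay of both, equals $\nabla^{\hat\E,\hat H}\beta$ for an honest $\beta\in L^2$; (ii) $p$ is surjective, by truncating $\omega\in{\mathcal H}^\bullet_{L^2}$ at $r=-R$, observing that $\omega_R$ differs from $\omega$ by an exponentially small exact $L^2$ error, and correcting $\omega_R$ to a closed compactly supported $\alpha$ with $p[\alpha]=\omega$; (iii) $\pi$ is injective, for if $\omega\in{\mathcal H}^\bullet_{L^2}$ with $[\omega]=0$, write $\omega=\nabla^{\hat\E,\hat H}\eta$, and since $\omega$ has no $\ker D^\E_H$-component on the end, $\eta$ may be chosen bounded there, so $\|\omega\|^2=\langle\eta,(\nabla^{\hat\E,\hat H})^\dagger\omega\rangle=0$. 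Combining gives ${\mathcal H}^\bullet(\hat Y,\hat\E,\hat H)\cong\im\big(H^\bullet_c(\hat Y,\hat\E,\hat H)\to H^\bullet(\hat Y,\hat\E,\hat H)\big)$.

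\textbf{Step 4 (passage to $(Y,\partial Y)$).} Using the retraction of $\hat Y$ onto $Y$ (and of the complement of a neighbourhood of the end onto the collar of $\partial Y$), together with boundary-compatibility of $\hat H$ and homotopy invariance of twisted cohomology (Proposition \ref{prop:homotopy}, Corollary \ref{cor:hmtp}), one obtains compatible identifications $H^\bullet_c(\hat Y,\hat\E,\hat H)\cong H^\bullet(Y,\partial Y,\E,H)$ and $H^\bullet(\hat Y,\hat\E,\hat H)\cong H^\bullet(Y,\E,H)$, which convert the isomorphism of Step 3 into the asserted one, the image of the relative twisted cohomology inside the absolute twisted cohomology.

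\textbf{Main obstacle.} The delicate points all lie in Step 3: controlling the primitive $\eta$ of an $L^2$-exact harmonic form finely enough that the term at infinity in the integration by parts vanishes, and, more broadly, making the twisted $L^2$ Hodge theory on the cylindrical end rigorous (closed range of $\Delta^{\hat\E}_{\hat H}$, agreement of reduced and unreduced $L^2$-cohomology). The zeroth-order nature of $H\wedge$ means these follow the untwisted template of \cite{APS1}, but one must verify that a translation-invariant $\hat H$ disturbs neither the self-adjointness of $D^\E_H$ nor its spectral gap, on which the entire argument rests.
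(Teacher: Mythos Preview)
Your proposal is correct and follows essentially the same route as the paper's proof: both adapt the classical APS argument (Proposition 4.9 of \cite{APS1}) to the twisted setting, using exponential decay of $L^2$ harmonics on the cylindrical end and a Green-formula integration by parts against a bounded primitive for injectivity. The one tactical difference is that the paper shows directly, via Poincar\'e duality on $\partial Y$, that the boundary restriction $i_0^*[\omega]$ vanishes (hence $[\omega]$ lifts to compactly supported cohomology), whereas you reach the same conclusion by constructing the Hodge projection $p\colon H^\bullet_c\to\mathcal H^\bullet$ and arguing that $\pi\circ p$ is the canonical map with $p$ surjective.
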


\begin{proof} 

The proof of Proposition (4.9) in \cite{APS1} extends with minor modifications to our  setting. By Proposition \ref{prop:sig4}, the ($\E$-valued twisted) $L^2$ harmonic forms on $Y$ coincide with the $L^2$ nullspace 
 of  $\nabla^{{\hat \E},{\hat H}} + ({\nabla^{{\hat \E}, {\hat H}}})^\dag$ acting on $\hat Y$. 
Despite the untwisted case, here the twisted Laplacian does not preserve the degree of the forms.

Notice that an ${\hat H}$-twisted  $L^2$ harmonic form $\omega$ is in the intersection of the nullspaces of $\nabla^{\hat E, {\hat H}}$ and its adjoint. 
In particular,  $\omega$ defines an element $[\omega]$ of $H^\bullet(\hat Y, \hat \E; {\hat H})$. To see that $[\omega]$ lies in the image of
$H^\bullet_c(\hat Y, {\hat H})$, it is sufficient to check that its restriction to $H^\bullet(\partial Y; H_\partial)$ is zero. Now given any $\omega'$ in the null space 
of $\nabla^{\E^*, -{\overline H}}$ over $\partial Y$ and any $u>0$, we denote by $\omega'_u$ the corresponding section over $\partial Y \times \{u\}$ and by $i_u: \partial Y\times \{u\}\hookrightarrow {\hat Y}$ the inclusion. Then we can write 
$$
\int_{\partial Y \times \{0\}} i_0^*\omega \wedge {\overline{\omega'_0}} = \int_{\partial Y \times \{u\}} i_u^*\omega \wedge {\overline{\omega'_u}}.
$$
But since $\omega$ is an ${\hat H}$-twisted  $L^2$ harmonic form, $i_u^*\omega$ is exponentially decreasing as $u\to +\infty$ (see (3.17) in \cite{APS1}) while $\omega'_u$ is constant. Hence, we see that
$$
\int_{\partial Y \times \{0\}} i_0^*\omega \wedge {\overline{\omega'_0}} = 0.
$$ 
Applying Poincar\'e duality on $\partial Y$ with the restricted closed odd form ${{H_\partial=}}i^*H$ to the boundary (Proposition \ref{prop:pd}), we deduce that 
$i_0^*[\omega] = 0$ in $H^\bullet (\partial Y, \hat E; H_\partial)$ and henceforth that the map $\omega \mapsto [\omega]$ yields
$$
\alpha_H: {\mathcal H}^\bullet (\hat Y, \hat \E, \hat H) \rightarrow {\hat H} (\hat Y, \hat \E, \hat H).
$$
We now prove as in \cite{APS1} that $\alpha_H$ is actually an isomorphism. Surjectivity is classical and uses a twisted version of the de Rham-Kodaira theorem, see page 140 and
 relations (1) page 105 in \cite{deRham} which obviously extend to our twisted situation. Recall that ${\hat H}$ is a bounded form 
and suppose now that $\omega$ is a twisted harmonic even form such that $\alpha_H \omega =0$. Then $\omega$ is exponentially decaying and using the compactification of $\hat Y$ obtained by adding a copy of $\partial Y$ at infinity, we deduce exactly as \cite{APS1} that there exists a bounded odd form $\theta$ with coefficients in $\hat \E$ such that $\omega=\nabla^{\hat\E, {\hat H}} \theta$. More precisely, $\omega= \omega_0+\omega_1 dr$, $\theta=\theta_0+\theta_1 dr$ and $\omega_0$ and $\omega_1$ are exponentially decaying with the variable $r$ while $\theta_0$ and $\theta_1$ are bounded. We then consider for any $U>0$ the compact submanifold ${\hat Y}_U$ of $\hat Y$ corresponding to the radial variable $r\leq U$. We then compute using that $H$  graded commutes with all the forms with coefficients in $\hat \E$
\begin{eqnarray*}
<\omega, \nabla^{\hat\E, {\hat H}}\theta >_{{\hat Y}_U} & = & <\omega, \nabla^{\hat\E}\theta>_{{\hat Y}_U} + <\omega, H\wedge \theta>_{{\hat Y}_U}\\
& = & <(\nabla^{\hat\E})^\dag\omega, \theta>_{{\hat Y}_U} + <\omega_1, \theta_0>_{\partial ({\hat Y}_U)} + <\omega, H\wedge \theta>_{{\hat Y}_U}\\
&=&  <\omega_1, \theta_0>_{\partial ({\hat Y}_U)} + <(\nabla^{\E})^\dag\omega, \theta>_{{\hat Y}_U} + <\star {\overline H} \wedge \star \omega, \theta>_{{\hat Y}_U}
\end{eqnarray*}
The last equality being {{an easy consequence of}} the definition of the hermitian scalar product. Recall that $(\nabla^{\E, {\hat H}})^\dag = -\star \nabla^{\hat\E, -{\overline{{\hat H}}}}  \star$, therefore,  we get the following (twisted Green formula) 
$$
<\omega, \nabla^{\hat\E, {\hat H}} \theta >_{{\hat Y}_U} =  <\omega_1, \theta_0>_{\partial ({\hat Y}_U)}  + <(\nabla^{\hat\E, {\hat H}})^\dag\omega, \theta>_{{\hat Y}_U} .
$$
Recall on the other hand that $\omega$ is harmonic, so $(\nabla^{\E, {\hat H}})^\dag\omega =0$ and also that $\nabla^{\hat\E, {\hat H}} \theta = \omega$, so we get
$$
<\omega, \omega >_{{\hat Y}_U} =  <\omega_1, \theta_0>_{\partial ({\hat Y}_U)} .
$$
Again because $ \theta_0$ is bounded and $\omega_1$ is exponentially decaying, we deduce that as $U$ goes to infinity $
<\omega, \omega >_{{\hat Y}} = 0.$

\end{proof}

Recall that ${\rm Sign}(Y, \E, H)$ denotes the signature of a natural 
quadratic form on the image 
of the twisted relative cohomology $H^\bullet(Y, \partial Y, \E, H)$ inside the twisted absolute 
cohomology $H^\bullet(Y,  \E, H)$. 
Also let $\cH^\pm(\hat Y, \hat \E, \hat H) = \cH^\bullet (\hat Y,  \hat \E, \hat H) \cap \Omega^\pm(\hat Y, \hat \E)$.

\begin{proposition}\label{prop:sig6} In the notation of Proposition \ref{prop:sig3} above,
we have $h^+(\E, H)_\infty = h^-(\E, H)_\infty = h$, where $h = \dim(\ker(D^\E_H)$.
\end{proposition}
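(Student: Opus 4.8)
The plan is to transcribe the Atiyah--Patodi--Singer argument for the classical signature operator (\cite{APS1}, \S3 and Proposition 4.9) to the twisted setting; the only genuinely new inputs are the twisted Green formula already used in the proof of Proposition \ref{prop:sig5} and the twisted Poincar\'e duality of Proposition \ref{prop:pd}. First I would recall that, since $\hat H$ is boundary-compatible, Lemma \ref{lem:bdry-iden} extends over the whole cylindrical end, so that $B^{\hat\E}_{\hat H}=\sigma(\partial_r+D^\E_H)$ there, with $D^\E_H$ acting on $\E$-valued forms of \emph{all} degrees on $\partial Y$. Consequently an extended $L^2$ solution $s$ of $(B^{\hat\E}_{\hat H})^+s=0$ (respectively of the adjoint equation $(B^{\hat\E}_{\hat H})^-s=0$) has, far out on the end, a well-defined asymptotic value $a_\infty\in\ker D^\E_H$; the assignment $s\mapsto a_\infty$ has kernel the space of honest $L^2$ harmonic forms (as in Proposition \ref{prop:sig5}) and image a subspace $L^{\pm}\subseteq\ker D^\E_H$ with $h^{\pm}(\E,H)_\infty=\dim L^{\pm}$.

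Next I would record two elementary facts. The operator $T=i^{m+p(p+1)}\star$ on $\Omega^p(\partial Y,\E)$ satisfies $T^2=1$, interchanges even and odd forms and commutes with $D^\E_H$ (the Remark following Lemma \ref{lem:bdry-iden}), so $\dim\ker D^\E_H=2h$, where $h=\dim\ker(D^\E_H|_{\mathrm{even}})$. And since $(B^{\hat\E}_{\hat H})^-$ is the formal adjoint of $(B^{\hat\E}_{\hat H})^+$ and both are of product type on the end, the argument of \cite{APS1}, \S3 carries over verbatim and yields that $L^+$ and $L^-$ are orthogonal complements in $\ker D^\E_H$; in particular $\dim L^++\dim L^-=2h$.

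The key point will be to prove that $L^+$ and $L^-$ are each \emph{isotropic} for the natural sesquilinear pairing $\omega$ on $\ker D^\E_H\cong H^\bullet(\partial Y,\E|_{\partial Y},i^*H)$ which is furnished, on the odd-dimensional closed boundary $\partial Y$, by the twisted Poincar\'e duality of Proposition \ref{prop:pd} together with the hermitian structure of $\E$ and the identity $-\overline{i^*H}=i^*H$ (which holds because $H$ is purely imaginary); this $\omega$ is nondegenerate. Given extended $L^2$ solutions $s_1,s_2$ of $(B^{\hat\E}_{\hat H})^+s=0$ with asymptotic values $a_1,a_2$, I would apply the twisted Green formula on the truncated manifold $\hat Y_U$ (the end cut at distance $U$) exactly as in the proof of Proposition \ref{prop:sig5}: the interior term vanishes because the $s_i$ are $\nabla^{\hat\E,\hat H}$-harmonic, leaving a boundary term over the cross-section at distance $U$ which converges, as $U\to\infty$, to a nonzero multiple of $\omega(a_1,a_2)$; hence $\omega(a_1,a_2)=0$, and the same computation gives isotropy of $L^-$. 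Since an $\omega$-isotropic subspace of $\ker D^\E_H$ has dimension at most $\tfrac12\dim\ker D^\E_H=h$, this together with $\dim L^++\dim L^-=2h$ forces $\dim L^+=\dim L^-=h$, that is $h^+(\E,H)_\infty=h^-(\E,H)_\infty=h$.

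The hard part will be making the isotropy step precise: identifying the exact sesquilinear boundary term produced by the twisted Green formula and matching it with the twisted Poincar\'e pairing on $\partial Y$. This requires reconciling the $\tau$-grading on $\hat Y$ with the even/odd grading on $\partial Y$ (via the operator $T$) and checking that the signs cooperate, so that $\omega$ is genuinely nondegenerate on $\ker D^\E_H$ and the restricted asymptotic values are genuinely isotropic. By contrast, the functional-analytic ingredients --- the asymptotic description of extended $L^2$ solutions and the orthogonality $L^+=(L^-)^\perp$ --- should be a routine transcription of \cite{APS1}, \S3.
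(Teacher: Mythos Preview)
The paper states Proposition~\ref{prop:sig6} without proof, so there is nothing to compare against directly. Your outline is the correct transcription of the Atiyah--Patodi--Singer argument for the signature operator (\cite{APS1}, \S4, especially the discussion around Theorem~4.14), and the ingredients you cite from the paper---the twisted Green formula from the proof of Proposition~\ref{prop:sig5}, twisted Poincar\'e duality on the closed boundary (Proposition~\ref{prop:pd}), and the $T$--symmetry of $D^\E_H$ from the Remark after Lemma~\ref{lem:bdry-iden}---are exactly what is needed.

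Two small points are worth tightening. First, ``$L^+$ and $L^-$ are orthogonal complements'' is slightly stronger than what \cite{APS1}, \S3 actually gives: what one gets directly (see the discussion after (3.23) there) is that $L^+$ and $L^-$ are \emph{complementary} in $\ker D^\E_H$, i.e.\ $L^+ + L^- = \ker D^\E_H$ and $L^+\cap L^-=0$; this already yields $\dim L^+ + \dim L^- = 2h$, which is all you use. Second, in the isotropy step you write that ``the interior term vanishes because the $s_i$ are $\nabla^{\hat\E,\hat H}$-harmonic''; be careful that $B^{\hat\E}_{\hat H}s=0$ does not by itself split into $\nabla^{\hat\E,\hat H}s=0$ and $(\nabla^{\hat\E,\hat H})^\dagger s=0$, since $s\in\Omega^+$ is not of pure $\mathbb Z$-degree. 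The cleanest way around this---and this is what APS actually do---is to pass to cohomology: identify $\ker D^\E_H$ with $H^\bullet(\partial Y,\E,i^*H)$ via twisted Hodge theory on the closed boundary, and identify $L^+$ with the image of the restriction map $H^\bullet(Y,\E,H)\to H^\bullet(\partial Y,\E,i^*H)$. Isotropy for the Poincar\'e pairing then follows from Stokes' theorem applied to $\nabla^{\E,H}$-closed representatives on $Y$, and this is precisely the twisted Poincar\'e--Lefschetz duality encoded in Proposition~\ref{prop:pd}. With that adjustment your plan goes through.
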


As an immediate consequence of Proposition \ref{prop:sig1}, Proposition \ref{prop:sig3} and 
Proposition \ref{prop:sig6}, we have the identification,

\begin{theorem}\label{thm:sig}
In the notation above, 
$$
\Index(B^\E_H; P^+) + \dim(\ker(D^\E_H))= 
{\rm Sign}(Y, \E, H)
$$
\end{theorem}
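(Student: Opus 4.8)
The plan is to assemble the identity from the four propositions listed just before the theorem, together with the Hodge-theoretic interpretation of $L^2$ harmonic forms in Proposition \ref{prop:sig5}. Concretely, Proposition \ref{prop:sig3} gives
$$
\Index(B^\E_H; P^+) = h^+(\hat\E, \hat H) - h^-(\hat\E, \hat H) - h_\infty(\E),
$$
and Proposition \ref{prop:sig6} identifies $h_\infty(\E)$ with $h := \dim\ker(D^\E_H)$; so the theorem is equivalent to showing
$$
h^+(\hat\E, \hat H) - h^-(\hat\E, \hat H) = {\rm Sign}(Y, \E, H).
$$
First I would invoke Proposition \ref{prop:sig4} to replace the kernels of $(B^{\hat\E}_{\hat H})^\pm$ by the $L^2$ kernel of the full twisted Laplacian-type operator, hence by the space $\cH^\bullet(\hat Y, \hat\E, \hat H)$ of $L^2$ twisted harmonic forms; the numbers $h^\pm$ are then $\dim\cH^\pm(\hat Y, \hat\E, \hat H)$, the dimensions of the $\pm1$-eigenspaces of the signature involution $\tau$ restricted to $\cH^\bullet(\hat Y, \hat\E, \hat H)$.

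Next I would bring in Proposition \ref{prop:sig5}: the map $\alpha_H$ identifies $\cH^\bullet(\hat Y, \hat\E, \hat H)$ with the image $\hat H(\hat Y, \hat\E, \hat H)$ of the relative cohomology $H^\bullet(Y, \partial Y, \E, H)$ inside the absolute cohomology $H^\bullet(Y, \E, H)$. Under this identification I need to check that the involution $\tau$ on harmonic forms corresponds, up to the factors of $i$ built into the hermitian form $B$, to the signature pairing $B$ on that image; this is exactly the boundary analogue of the computation carried out earlier in the paper showing $\Sign(X, \E, H)$ equals $\dim\cH^+ - \dim\cH^-$ for closed manifolds (the proposition right after Definition \ref{def:signature} and the computation with $B^i$). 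The key point is that $\tau$ preserves $\cH^\bullet(\hat Y, \hat\E, \hat H)$ because $H$ is purely imaginary so $B^{\hat\E}_{\hat H}\tau = -\tau B^{\hat\E}_{\hat H}$, hence $\tau$ commutes with the Laplacian; then on the harmonic representatives the form $B$ is positive definite on $\cH^+$ and negative definite on $\cH^-$, exactly as in the closed case, giving $\Sign(Y,\E,H) = \dim\cH^+ - \dim\cH^-$.

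Then I would simply chain the equalities: $\Index(B^\E_H; P^+) + h = h^+ - h^- = \dim\cH^+(\hat Y,\hat\E,\hat H) - \dim\cH^-(\hat Y,\hat\E,\hat H) = \Sign(Y,\E,H)$, which is the assertion. The one subtlety worth flagging is consistency of the various sign and $i$-power conventions: the signature involution $\tau = i^{m+p(p-1)}\star$, the hermitian form $B$ which carries an extra factor $i$ on odd forms, and the grading-shift operator $J^\pm$ used in Lemma \ref{lem:bdry-iden} to pass between $\Omega^\pm(Y_\epsilon,\E)$ and $\Omega^*(\partial Y,\E)$, must all be reconciled so that ``$+1$-eigenspace of $\tau$'' really does match ``positive part of $B$.'' I expect this bookkeeping to be the main obstacle: the geometric input is entirely supplied by Propositions \ref{prop:sig1}--\ref{prop:sig6} and \ref{prop:sig5}, but verifying that the definiteness of $B$ on the two $\tau$-eigenspaces survives the extended-$L^2$/relative-cohomology identification requires care, since on a manifold with boundary the harmonic forms are not degree-homogeneous and the pairing $B$ mixes degrees via $\tau$. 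Once that is pinned down exactly as in the closed-manifold computation, the theorem follows immediately by combining the cited propositions.
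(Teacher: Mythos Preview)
Your proposal is correct and follows the paper's own approach: the paper states the theorem as an ``immediate consequence'' of Propositions \ref{prop:sig3} and \ref{prop:sig6} (together with the Hodge identification of Proposition \ref{prop:sig5}) without writing out the details, and you have correctly reconstructed the chain $\Index + h = h^+ - h^- = \dim\cH^+ - \dim\cH^- = \Sign(Y,\E,H)$. The one caveat is that the paper's statement of Proposition \ref{prop:sig3} appears to have a typo (it says $h^-$ counts \emph{extended} $L^2$-solutions, whereas the APS original and the arithmetic of the argument require it to count genuine $L^2$-solutions); your reading is the correct one.
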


Now 
a  consequence of Proposition \ref{prop:sig1} and Theorem \ref{thm:sig}, we
have,

\begin{corollary}\label{cor:sig}
In the notation of Proposition \ref{prop:sig1} above, 
\begin{equation}\label{sign-thm}
{\rm Sign}(Y, \E, H)= {{\Rank (\E)}} \int_Y \alpha_0(x)dx - \eta(D^\E_H).
\end{equation}
\end{corollary}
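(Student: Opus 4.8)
The plan is to combine the results that have already been assembled in this section. By Proposition~\ref{prop:sig1}, the index of the elliptic boundary value problem $(B^\E_H; P^+)$ equals
$$
\Index(B^\E_H; P^+) = \Rank(\E)\int_Y \alpha_0(x)\,dx - \bigl(\dim(\ker(D^\E_H)) + \eta(D^\E_H)\bigr).
$$
Separately, Theorem~\ref{thm:sig} identifies the left-hand side plus the kernel correction with the twisted signature of the manifold with boundary:
$$
\Index(B^\E_H; P^+) + \dim(\ker(D^\E_H)) = {\rm Sign}(Y,\E,H).
$$
The proof is then simply to eliminate $\Index(B^\E_H; P^+)$ between these two identities: substituting the first into the second gives
$$
{\rm Sign}(Y,\E,H) = \Rank(\E)\int_Y \alpha_0(x)\,dx - \dim(\ker(D^\E_H)) - \eta(D^\E_H) + \dim(\ker(D^\E_H)) = \Rank(\E)\int_Y \alpha_0(x)\,dx - \eta(D^\E_H),
$$
which is exactly \eqref{sign-thm}. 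The role of the term $\dim(\ker(D^\E_H))$ is that it appears with opposite signs in the two inputs, so it cancels cleanly; this cancellation is the whole point of pairing the APS index formula with the cohomological interpretation of the index.

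I should be careful that the hypotheses line up: Proposition~\ref{prop:sig1} and Theorem~\ref{thm:sig} are both stated under the standing assumptions of Lemma~\ref{lem:bdry-iden}, namely that the Riemannian metric on $Y$ and the Hermitian metric on $\E$ are of product type near the boundary and that $H$ is boundary-compatible (which by Lemma~\ref{lem:bdry.compatible} can always be arranged within the cohomology class). Under these assumptions both formulas are available verbatim, so there is nothing further to check; the corollary is a two-line algebraic consequence. Since the statement of the corollary is explicitly set ``in the notation of Proposition~\ref{prop:sig1}'', the product-type and boundary-compatibility assumptions are in force, and the notation $\alpha_0(x)$ is the same in all three statements.

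There is no real obstacle here: the entire content of the corollary was already absorbed into Proposition~\ref{prop:sig1} (the APS heat-kernel index computation), Theorem~\ref{thm:sig} (which itself rests on Propositions~\ref{prop:sig2}--\ref{prop:sig6}, i.e.\ the identification of $L^2$ and extended-$L^2$ harmonic spaces on the manifold $\hat Y$ with cylindrical end, together with Proposition~\ref{prop:sig5} identifying $L^2$ harmonic forms with the image of relative in absolute twisted cohomology and Proposition~\ref{prop:sig6} giving $h_\infty(\E) = \dim\ker(D^\E_H)$). If anything deserves a remark, it is that the quantity $\alpha_0(x)$ is, in the case $H=0$, the Hirzebruch $\LL$-form of the Levi-Civita connection (times $\Rank\E$), and more generally is identified in the Appendix (Corollary~\ref{cor:indexcoeff}) as the same $\LL$-form, so that \eqref{sign-thm} can be rewritten with $\int_Y \LL(\Omega_Y)$ in place of $\Rank(\E)\int_Y\alpha_0(x)\,dx$ as in the introduction; but strictly as stated the corollary needs none of that and follows immediately.
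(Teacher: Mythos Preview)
Your proof is correct and follows exactly the paper's own argument: the paper simply states that Corollary~\ref{cor:sig} is ``a consequence of Proposition~\ref{prop:sig1} and Theorem~\ref{thm:sig}'', i.e.\ precisely the two-line elimination of $\Index(B^\E_H;P^+)$ and the cancellation of $\dim(\ker(D^\E_H))$ that you spell out. Your additional remarks on the standing hypotheses and on the identification of $\alpha_0(x)$ are accurate and helpful, but the paper itself does not elaborate beyond citing the two inputs.
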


This is the main tool used to establish our results about the twisted rho invariant. 
Via Corollary \ref{cor:sig}, we see that 

\begin{corollary}\label{cor:rho}
If $X = \partial Y$, with $\E$ and $H$ extending to $Y$ as $\widetilde\E$ and $\widetilde H$ respectively, 
then $$\rho(X,\E,H) = \Rank(\E)\,
{\rm Sign}(Y, \widetilde H) - {\rm Sign}(Y, \widetilde\E, \widetilde H).$$
\end{corollary}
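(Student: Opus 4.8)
The plan is to apply Corollary \ref{cor:sig} twice on the compact manifold with boundary $Y$ — once with the flat bundle $\widetilde\E$ and once with the trivial line bundle — and then to take the appropriate linear combination of the two resulting identities. First I would fix a Riemannian metric on $Y$ and a Hermitian metric on $\widetilde\E$ that are of product type near $\partial Y = X$ and that restrict on $X$ to the metrics used to define $\rho(X,\E,H)$; I would also arrange that $\widetilde H$ is boundary-compatible (after replacing it, if necessary, by a boundary-compatible representative of its cohomology class via Lemma \ref{lem:bdry.compatible}, which replaces $H = i^*\widetilde H$ by a cohomologous form on $X$ and changes neither ${\rm Sign}(Y,\widetilde\E,\widetilde H)$ nor ${\rm Sign}(Y,\widetilde H)$ by homotopy invariance, nor $\rho(X,\E,H)$ as established in Section \ref{sect:var}). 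With these choices, the boundary operator produced by Lemma \ref{lem:bdry-iden} for the pair $(\widetilde\E,\widetilde H)$ on $Y$ is exactly the twisted odd signature operator $D^\E_H$ on $X$, since $\widetilde\E|_{\partial Y}=\E$ and $i^*\widetilde H = H$; in particular $\eta(D^{\widetilde\E}_{\widetilde H}) = \eta(D^\E_H)$, and likewise the trivial-bundle boundary operator is $D_H$.

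Next I would invoke Corollary \ref{cor:sig} for $(Y,\widetilde\E,\widetilde H)$ and for $(Y,\widetilde H)$, obtaining
$$
{\rm Sign}(Y, \widetilde\E, \widetilde H) = \Rank(\widetilde\E)\int_Y \alpha_0(x)\,dx - \eta(D^\E_H),
\qquad
{\rm Sign}(Y, \widetilde H) = \int_Y \alpha_0(x)\,dx - \eta(D_H),
$$
where the crucial point — supplied by Remark \ref{rem:sig1}, i.e. by Corollary \ref{cor:indexcoeff} of the Appendix — is that the interior density $\alpha_0(x)$ occurring in Corollary \ref{cor:sig} depends only on the local Riemannian and flux data and is \emph{the same} for $(Y,\widetilde\E,\widetilde H)$ and for $(Y,\widetilde H)$, the bundle entering only through the overall factor $\Rank(\widetilde\E)=\Rank(\E)$. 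Multiplying the second identity by $\Rank(\E)$ and subtracting the first then cancels the $\int_Y\alpha_0$ term, leaving
$$
\Rank(\E)\,{\rm Sign}(Y, \widetilde H) - {\rm Sign}(Y, \widetilde\E, \widetilde H) = \eta(D^\E_H) - \Rank(\E)\,\eta(D_H),
$$
and the right-hand side is precisely $\rho(X,\E,H)$ by Definition \ref{DefRho}. This proves the corollary.

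The main obstacle — indeed the only substantive ingredient beyond bookkeeping with Corollary \ref{cor:sig} — is the rank-proportionality of the interior index density $\alpha_0(x)$ for arbitrary flux $H$; this is exactly what is proved in the Appendix, and without it one would be left with the weaker assertion that $\rho(X,\E,H)$ equals a difference of signatures up to an interior integral that a priori depends on $\E$. The subtleties about product metrics and boundary-compatibility of $\widetilde H$ are routine: product metrics near $\partial Y$ extending the given data always exist, and the APS boundary identification of Lemma \ref{lem:bdry-iden} is local near the boundary, so it reproduces the operators $D^\E_H$ and $D_H$ on $X$ on the nose. The metric independence of the left-hand side (hence a second proof that $\rho$ is a differential invariant) is a by-product but is not needed here and is treated separately in Section \ref{sect:var}.
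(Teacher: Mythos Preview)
Your proposal is correct and follows exactly the approach indicated in the paper, which simply says ``Via Corollary \ref{cor:sig}, we see that'' before stating the result. You have carefully unpacked this: apply Corollary \ref{cor:sig} once with coefficients in $\widetilde\E$ and once with the trivial line bundle, then subtract, using the rank-proportionality of the interior density $\alpha_0$ from Corollary \ref{cor:indexcoeff} to cancel the local terms; the remaining difference of eta invariants is precisely $\rho(X,\E,H)$ by definition.
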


This corollary says for instance that the rho invariant is an integer in this particular situation.

\section{Stability properties of the twisted rho invariant}
\label{sect:var}

Here we prove {{that } the twisted rho invariant $\rho(X,\E,H)$ is independent of the
choice of the Riemannian metric on $X$ and the Hermitian metric on $\E$ needed
in its definition. It is also invariant if $H$ is deformed within its cohomology class.
The proofs of these results rely on the index theorem for twisted signature 
complexes for manifolds with boundary, established in section \ref{sect:twistedsig}.
We also state and prove the basic functorial properties of the rho invariant
 for the twisted signature complex

\subsection{Variation of the Riemannian and Hermitian metrics}
We assume that $X$ is a compact oriented manifold of odd dimension.
Let $g_X$ be a Riemannian metric on $X$ and $g_\E$, an Hermitian metric 
on $\E$.
Suppose that the pair $(g_X,g_\E)$ is deformed smoothly along a one-parameter
family with parameter $t\in\RE$, then the operators $\ast$, $\sharp$ and 
$\star_\E=\ast\sharp=\sharp\ast$ (see \$\ref{sect:scalar}) all depend smoothly
on $t$.
We have,

\begin{theorem}[metric independence of the rho invariant]\label{thm:indept}
Let $X$ be a compact, oriented manifold of odd dimension, $\E$, a flat hermitian vector
bundle over $X$, and $H = \sum i^{j+1} H_{2j+1} $ is an 
odd-degree closed differential form on $X$ and $H_{2j+1}$ is a real-valued differential form 
of degree ${2j+1}$. 
Then the rho invariant $\rho(X,\E,H)$ of the twisted signature complex
does not depend on the choice of the Riemannian metric on $X$ or the
Hermitian metric on $\E$. 
\end{theorem}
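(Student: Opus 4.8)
The strategy is the classical Atiyah–Patodi–Singer argument for the variation of eta invariants, adapted to the twisted setting using the index theorem of Corollary \ref{cor:sig}. Let $(g_X^t, g_\E^t)$, $t \in [0,1]$, be a smooth one-parameter family of Riemannian and Hermitian metrics. This gives a smooth family of self-adjoint elliptic operators $D^\E_{H,t}$ (and $D_{H,t}$ in the trivial-bundle case), hence a family of eta invariants $\eta(D^\E_{H,t})$. The point is that $\eta(D^\E_{H,t})$ may jump by integers as eigenvalues cross zero, but (i) the non-integer part varies smoothly, with derivative given by a \emph{locally computable} (integral over $X$ of a density built from the symbol and the metrics) quantity, and (ii) this local density is, up to the factor $\Rank(\E)$, the same for $D^\E_H$ and for $D_H$. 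Subtracting, the continuous parts of $\eta(D^\E_{H,t}) - \Rank(\E)\,\eta(D_{H,t})$ have vanishing derivative, and the integer jumps are controlled as well; hence $\rho(X,\E,H)$ is constant in $t$.

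First I would recall the standard fact (e.g.\ \cite{APS3}, \cite{GilkeyBook}) that for a smooth family $A_t$ of first-order self-adjoint elliptic operators on a closed odd-dimensional manifold, the reduced eta invariant $\bar\eta(A_t) = \tfrac12(\eta(A_t) + \dim\ker A_t) \bmod \ZA$ varies smoothly in $\RE/\ZA$, and
$$
\frac{d}{dt}\,\eta(A_t) \;=\; \text{(local term)} \;+\; \text{(integer jumps from spectral flow)},
$$
where the local term is $-2\,\mathrm{LIM}_{s\to 0}\; s\,\Tr\!\big(\dot A_t\, A_t\, |A_t|^{-s-1}\big)$, which by the general local index machinery is the integral over $X$ of a density polynomial in the jets of the symbol of $A_t$ and $\dot A_t$. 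Next I would implement the APS ``doubling'' device: form $Y = X \times [0,1]$ with the product metric interpolating $g^0$ and $g^1$, extend $\E$ and $H$ by pullback, and apply Corollary \ref{cor:sig} to $Y$ (which has boundary $X \sqcup \bar X$). Since $\mathrm{Sign}(Y,\E,H)$ is a homotopy/topological quantity equal to $\Rank(\E)\,\mathrm{Sign}(Y)$ by the last Proposition of Section 2, and likewise $\mathrm{Sign}(Y, H) = \mathrm{Sign}(Y)$, the difference
$$
\mathrm{Sign}(Y,\E,H) - \Rank(\E)\,\mathrm{Sign}(Y,H) \;=\; 0.
$$
Feeding this into Corollary \ref{cor:sig} applied to both the $\E$-twisted and trivial-coefficient operators shows that
$$
\Rank(\E)\!\int_Y \alpha_0^{(\E)}(x)\,dx \;-\; \Rank(\E)\!\int_Y \alpha_0^{(triv)}(x)\,dx \;=\; \eta(D^\E_{H,1}) - \eta(D^\E_{H,0}) - \Rank(\E)\big(\eta(D_{H,1}) - \eta(D_{H,0})\big),
$$
up to the integer contributions $\dim\ker$ terms that cancel in the rho difference. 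The key input is Remark \ref{rem:sig1} together with Corollary \ref{cor:indexcoeff} of the Appendix: the local density $\alpha_0(x)$ is $\Rank(\E)$ times a universal density depending only on the metric and on $H$, \emph{not} on $\E$ — so the two integrals over $Y$ coincide and the right-hand side vanishes.

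For the Hermitian-metric variation the same scheme works: a deformation of $g_\E$ alone changes $\sharp$ and hence $\star_\E$ and $D^\E_H$, but leaves $D_H$ untouched; running the product-manifold argument again shows the $\E$-dependent local term still equals $\Rank(\E)$ times the $\E$-independent universal density, so $\eta(D^\E_{H,t})$ changes only by the same multiple of what $\Rank(\E)\,\eta(D_{H,t})$ would change by — namely zero — plus integer jumps. Finally I would argue that the integer jumps also cancel: the spectral flow of the family $D^\E_{H,t}$ is computed by the index of the suspended operator on $X\times[0,1]$ with APS boundary conditions, which by the same vanishing of $\mathrm{Sign}(Y,\E,H) - \Rank(\E)\mathrm{Sign}(Y,H)$ equals $\Rank(\E)$ times the spectral flow of $D_{H,t}$; hence the integer corrections match and $\rho(X,\E,H)$ is literally constant in $t$. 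The main obstacle is establishing that the local heat-coefficient density $\alpha_0(x)$ is $\Rank(\E)$ times an $\E$-independent quantity — i.e.\ invoking precisely the Appendix's Corollary \ref{cor:indexcoeff} — since for a \emph{flat} bundle this is morally clear (the Chern character is trivial in positive degree) but the twisting by $H$ means the relevant Lichnerowicz-type operator is not the usual signature Laplacian and one must check the local cancellation carefully; the rest is the standard APS bookkeeping.
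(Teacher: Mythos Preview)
Your approach is essentially the paper's --- product cylinder $Y=X\times[0,1]$ plus Corollary~\ref{cor:sig} --- but you over-complicate it in two places.

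First, the paper does \emph{not} use the identity $\Sign(Y,\E,H)=\Rank(\E)\,\Sign(Y)$; that proposition is proved only for closed manifolds, so your citation of it for the cylinder is not justified as stated. Instead the paper shows directly that $\Sign(X\times[0,1],\pi^*\E,\pi^*H)=0$: the retraction $\iota_s(x,t)=(x,st)$ onto one boundary face annihilates the image of relative twisted cohomology in absolute twisted cohomology (Proposition~\ref{prop:homotopy}), so the quadratic form whose signature is being taken lives on the zero space. This works identically for the trivial line bundle.

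Second, and more importantly, Corollary~\ref{cor:sig} is an \emph{exact} identity, not a congruence modulo $\ZA$. There are no $\dim\ker$ terms left over (they were absorbed into the signature in Theorem~\ref{thm:sig}), and there is no spectral-flow correction to chase. Your entire discussion of reduced eta invariants, integer jumps, and matching spectral flows is superfluous. Once $\Sign(Y,\pi^*\E,\pi^*H)=0$ and $\Sign(Y,\pi^*H)=0$, Corollary~\ref{cor:sig} gives on the nose
\[
\eta(D^\E_H,g_1,\lambda_1)-\eta(D^\E_H,g_0,\lambda_0)=\Rank(\E)\int_Y\alpha_0,
\qquad
\eta(D_H,g_1)-\eta(D_H,g_0)=\int_Y\alpha_0,
\]
and subtraction finishes the proof. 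Your identification of Corollary~\ref{cor:indexcoeff} as the key input (that $\alpha_0$ is $\E$-independent) is exactly right.
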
 

\begin{proof}
Consider the manifold with boundary $Y = X \times [0, 1]$, where the boundary $\partial Y 
= X \times \{0\} - X \times \{1\}$. Let $g_0, g_1$ denote two Riemannian metrics on $X$ and 
$\lambda_0, \lambda_1$ denote two flat hermitian metrics on $\E$. Let $f: [0, 1] \to  [0, 1]$
be a smooth function such that $f (t)= 0$ for all $t$ small and $f (t)= 1$ for all $1-t$ small.
Define $g_t = (1-f(t))g_0 + f(t) g_1$, $\lambda_t =  (1-f(t))\lambda_0 + f(t) \lambda_1$. Then 
$g = g_t + dt^2$ defines a Riemannian metric on $Y$ which is of product type near the boundary,
and $\lambda = \lambda_t + dt^2$ defines a hermitian metric on $\pi^*(\E)$ (where $\pi : Y \to X$
denotes projection onto the first factor) which is also of product type near the boundary. Clearly 
$\pi^*(H)$ is a boundary compatible, closed odd degree form on $Y$.

\begin{figure}[h]
\includegraphics[height=3in]{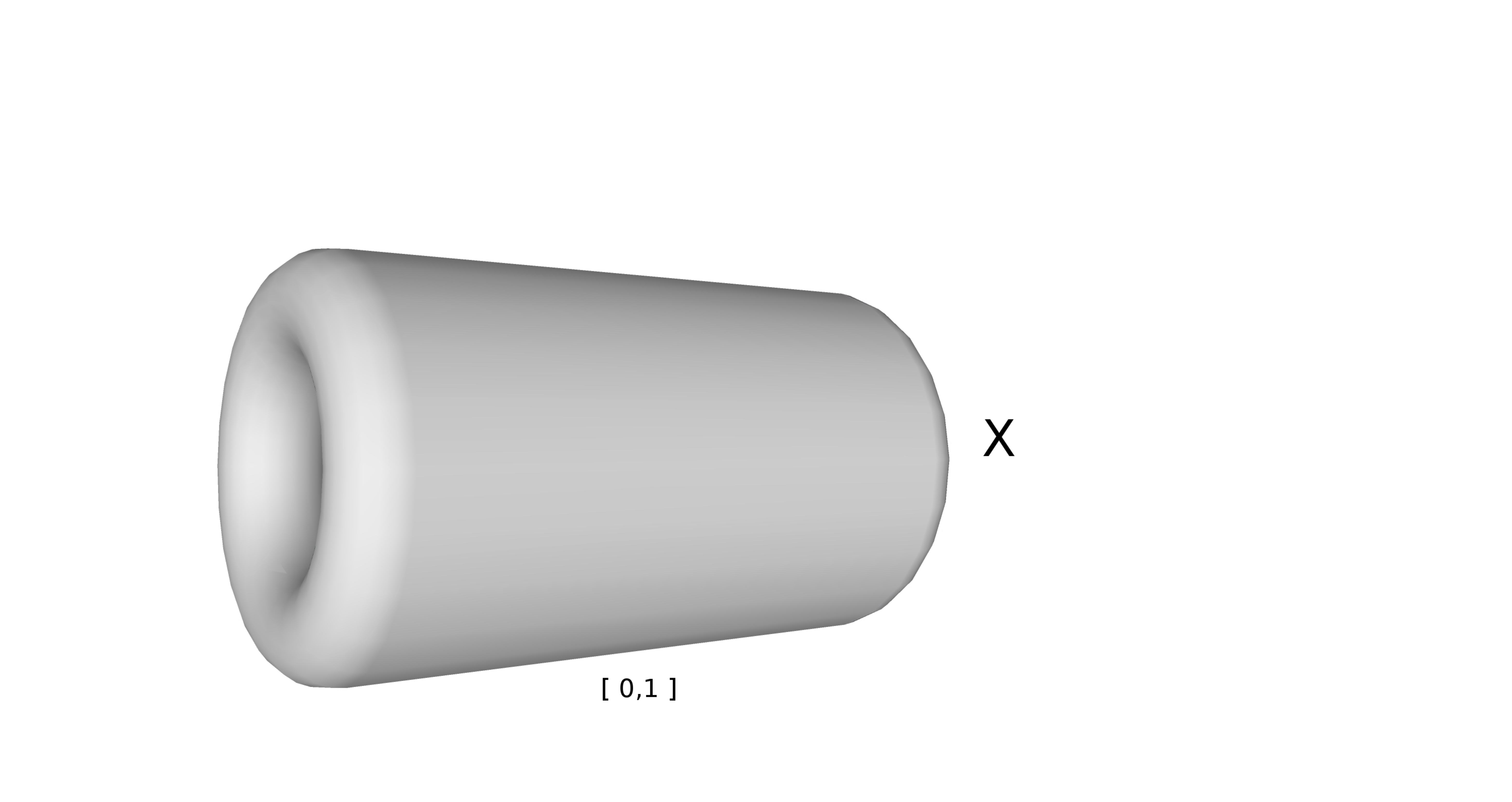}\\
\caption{Cylindrical manifold, $\quad g= g_t + dt\otimes dt,  \lambda = \lambda_t + dt\otimes dt$}
\label{fig:box-metric}
\end{figure}

Applying the signature theorem for the twisted signature complex, Corollary \ref{cor:sig}, we get
\begin{equation}\label{eqn:sig1}
{\rm Sign}(X \times [0,1], \pi^*(\E), \pi^*(H))=\Rank(\E) \int_Y \alpha_0(x)dx - \left(\eta(D^\E_H, g_1, \lambda_1)
-\eta(D^\E_H, g_0, \lambda_0)\right)\\
\end{equation}
We next {{show}} that ${\rm Sign}(X \times [0,1], \pi^*(\E), \pi^*(H))=0$. This follows from the fact that
the image of the relative twisted cohomology $H^\bullet(Y, \partial Y, \pi^*(\E), \pi^*(H))$ in the
absolute  twisted cohomology $H^\bullet(Y,  \pi^*(\E), \pi^*(H))$ is zero. We argue this as follows. 
For $s\in [0,1]$, let $\iota_s : Y \to Y$ be defined as $\iota_s(x, t) = (x, st)$. Then $\iota_s$ is a smooth map such that $\iota_0(x, t) = (x, 0)$, $\iota_1(x, t)=(x, t)$, and $\iota_s(x, 0)=(x,0)$. By the homotopy invariance 
of the twisted absolute cohomology, Proposition \ref{prop:homotopy}, $\iota_0^* = \iota_1^*= {\rm Id} :  
H^\bullet(Y,  \pi^*(\E), \pi^*(H)) \to H^\bullet(Y,  \pi^*(\E), \pi^*(H))$. On the other hand, one has
$0 = \iota_0^*\big|_{{\rm Image}(H^\bullet(Y, \partial Y, \pi^*(\E), \pi^*(H)))}$. Therefore
\begin{equation}\label{eqn:sig2}
{\rm Sign}(X \times [0,1], \pi^*(\E), \pi^*(H))=0.
\end{equation}
The same proof shows when $H=0$ that (see also equation (2.3) of \cite{APS2}) 
\begin{equation}\label{eqn:sig3}
\eta(D^\E, g_1, \lambda_1)
-\eta(D^\E, g_0, \lambda_0) =  \Rank(\E)\int_Y\alpha_0(x)dx
\end{equation}
Combing {{\eqref{eqn:sig1}, \eqref{eqn:sig2} and  \eqref{eqn:sig3}}}, the Theorem follows.
\end{proof}

\subsection{Variation of the flux in a
cohomology class}\label{sect:var-H}
We continue to assume that $\dim X$ is odd and use the same notation as above. {{In the sequel, a closed odd degree form $\omega$  is said to be of the ``form above'' if it has the decomposition 
$$
\omega=\omega_0 + i \omega_1, \quad \text{ with } \omega_0\in \Omega^{4\bullet + 3}(X, \RR) \text{ and } \omega_1\in \Omega^{4\bullet +1}(X,\RR).
$$
so $\omega=\sum_j i^{j+1} \omega_{2j+1}$.}}
Then we have,

\begin{theorem}[flux representative independence of the rho invariant]
\label{thm:indept-H}
Let $X$ be a compact, oriented Riemannian manifold of odd dimension and $\E$,
a flat hermitian vector bundle over $X$.
Suppose $H_0$ and $H_1$ are closed differential forms on $X$ of odd degrees
representing the same de Rham cohomology class, and are of the form above. 
Then one has the equality of rho invariants 
$\rho(X,\E,H_1)=\rho(X,\E,H_0)$ and eta invariants $\eta(D^\E_{H_1})
=\eta(D^\E_{H_0})$.
\end{theorem}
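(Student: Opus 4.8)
The plan is to run the argument of Theorem~\ref{thm:indept} on the cylinder $Y=X\times[0,1]$, this time keeping the metric fixed (a product $g_X+dt^2$) and the bundle equal to $\pi^\ast\E$, $\pi\colon Y\to X$, while interpolating the two flux forms instead of the metrics. Since $[H_0]=[H_1]$, write the difference degreewise as $(H_1)_{2j+1}-(H_0)_{2j+1}=db_{2j}$ with $b_{2j}$ a real $2j$-form on $X$, set $B=\sum_j i^{j+1}b_{2j}$ so that $dB=H_1-H_0$, pick $f\colon[0,1]\to[0,1]$ smooth with $f\equiv0$ near $0$ and $f\equiv1$ near $1$, and put
\[
\mathcal H=\pi^\ast H_0+d\bigl(f(t)\,\pi^\ast B\bigr)=\pi^\ast H_0+f(t)\,\pi^\ast(H_1-H_0)+f'(t)\,dt\wedge\pi^\ast B .
\]
A direct check shows that $\mathcal H$ is a closed odd-degree form of the form above on $Y$ (each homogeneous component keeps the coefficient $i^{j+1}$ times a real factor), that $\mathcal H$ equals $\pi^\ast H_0$ near $X\times\{0\}$ and $\pi^\ast H_1$ near $X\times\{1\}$ — hence is boundary-compatible — and that $\mathcal H$ is cohomologous to $\pi^\ast H_0$ in $H^{\bar1}(Y)_a$, the correction $f(t)\,\pi^\ast B$ being supported away from $\partial Y$.

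Corollary~\ref{cor:sig} applied to $(Y,\pi^\ast\E,\mathcal H)$ then gives
\begin{equation}\label{eqn:sketchaps}
{\rm Sign}(Y,\pi^\ast\E,\mathcal H)=\Rank(\E)\int_Y\alpha_0(x)\,dx-\bigl(\eta(D^\E_{H_1})-\eta(D^\E_{H_0})\bigr),
\end{equation}
the boundary term being the eta invariant of the boundary operator at $X\times\{1\}$, where $\mathcal H$ restricts to $H_1$, minus the one at $X\times\{0\}$, where it restricts to $H_0$. As in the proof of Theorem~\ref{thm:indept}, the left-hand side is zero: the twisted signature with boundary depends only on the class of the flux in $H^{\bar1}(Y)_a$, so ${\rm Sign}(Y,\pi^\ast\E,\mathcal H)={\rm Sign}(Y,\pi^\ast\E,\pi^\ast H_0)$; and the retraction $\iota_s(x,t)=(x,st)$ satisfies $\iota_s^\ast\pi^\ast H_0=\pi^\ast H_0$, so by Proposition~\ref{prop:homotopy} the identity of $H^\bullet(Y,\pi^\ast\E,\pi^\ast H_0)$ factors, on the image of the twisted relative cohomology, through a map into $\partial Y$ — forcing that image, hence the quadratic form $B$ on it, to vanish.

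Running the same computation with $\E$ replaced by the trivial line bundle yields $0=\int_Y\alpha_0(x)\,dx-\bigl(\eta(D_{H_1})-\eta(D_{H_0})\bigr)$, where by Remark~\ref{rem:sig1} and Corollary~\ref{cor:indexcoeff} the heat-supertrace constant term for $\pi^\ast\E$ is $\Rank(\E)$ times the \emph{same} density $\alpha_0(x)$. Subtracting $\Rank(\E)$ times this identity from \eqref{eqn:sketchaps}, and using ${\rm Sign}(Y,\pi^\ast\E,\mathcal H)=0$, gives $\eta(D^\E_{H_1})-\eta(D^\E_{H_0})=\Rank(\E)\bigl(\eta(D_{H_1})-\eta(D_{H_0})\bigr)$, i.e.\ $\rho(X,\E,H_1)=\rho(X,\E,H_0)$. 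For the stronger assertion $\eta(D^\E_{H_1})=\eta(D^\E_{H_0})$ one checks that, the metric on $Y$ being a product, $\int_Y\alpha_0(x)\,dx=0$: by the Appendix the local index density is a characteristic form of a connection built from the Levi-Civita connection of $g_Y$ and a torsion determined by $\mathcal H$, and since $\mathcal H$ reduces to $\pi^\ast H_j$ near each end and $[H_0]=[H_1]$, this density is cohomologous to the product $\LL$-form $\pi^\ast\LL(\Omega_X)$ — which has no top-degree component on the $(2m-1)$-manifold $X$ — with cancelling transgression contributions at the two boundary components, exactly as in \eqref{eqn:sig3} specialized to equal metrics at the two ends.

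The main obstacle is the vanishing ${\rm Sign}(Y,\pi^\ast\E,\mathcal H)=0$: one must verify that the gauge correction $d(f(t)\pi^\ast B)$, although it alters the flux nontrivially in the interior, leaves the class $[\mathcal H]\in H^{\bar1}(Y)_a$ — and hence, by homotopy invariance of the twisted signature of $(Y,\partial Y,\pi^\ast\E,[\mathcal H])$, the signature itself — unchanged, so that the retraction argument of Theorem~\ref{thm:indept} applies and annihilates the image of twisted relative cohomology inside twisted absolute cohomology. The only other delicate point, needed solely for the equality of eta invariants, is the vanishing of the APS local term on the product cylinder, which rests on the computation of $\alpha_0(x)$ carried out in the Appendix.
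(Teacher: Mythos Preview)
Your proof is correct and takes essentially the same approach as the paper: interpolate the flux on the cylinder $Y=X\times[0,1]$ via $\mathcal H=\pi^*H_0+d(f(t)\pi^*B)$, apply the twisted APS signature formula (Corollary~\ref{cor:sig}), kill the signature using the retraction $\iota_s$ together with flux-class independence (Lemma~\ref{lem:fluxindep}), and dispose of the local term $\int_Y\alpha_0$ via the Appendix and the product-metric argument. Your writeup is in fact slightly more careful than the paper's on two points: your $\mathcal H$ is manifestly closed (the paper's formula $H_t-f'(t)B\wedge dt$ carries a sign slip), and you spell out the transgression reasoning behind $\int_Y\alpha_0=0$ for the eta-invariant claim rather than just citing \cite{APS2}.
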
 

\begin{proof}\
{{Let $B$ be an even degree
form so that $H_1=H_0+dB$. Notice that we can assume, subtracting from $B$ a closed even degree form if necessary, that $B$ is of the form $B_0+iB_1$ with $B_0\in \Omega^{4\bullet +2}(X,\RR)$ and $B_1\in \Omega^{4\bullet}(X,\RR)$.}}

Consider the manifold with boundary $Y = X \times [0, 1]$, where the boundary $\partial Y 
= X \times \{0\} - X \times \{1\}$. 
Define the odd degree form on $Y$,
$$
H_t  =  (1-f(t))H_0 + f(t) H_1,  \qquad t\in [\epsilon, 1-\epsilon];
$$
where $f: [0, 1] \to  [0, 1]$
be a smooth function such that $f (t)= 0$ for all $t$ small and $f (t)= 1$ for all $1-t$ small.

Then $\,H =H_t - \frac{\partial}{\partial t} f(t)B \wedge dt\,$ defines a closed differential {{odd}} degree form on $Y$ which is of the form above on $Y$. By fiat, $H$ is boundary compatible.

\begin{figure}[h]
\includegraphics[height=3in]{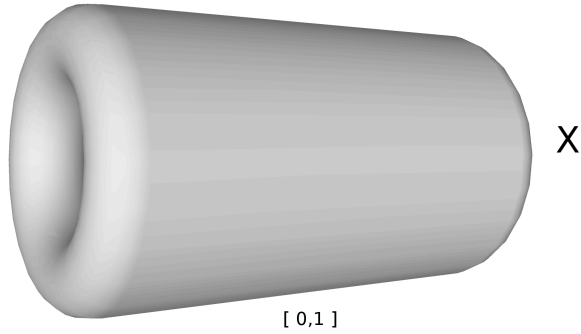}\\
\caption{Cylindrical manifold, $\quad H= H_t - \frac{\partial}{\partial t} f(t) B \wedge dt$}
\label{fig:box-metric}
\end{figure}

Consider the product hermitian metric on $\pi^*(\E)$ (where $\pi : Y \to X$
denotes projection onto the first factor) and the product Riemannian metric on $Y$. 
Applying the signature theorem for the twisted signature complex, Corollary \ref{cor:sig}, we get
\begin{equation}\label{eqn:sig1a}
{\rm Sign}(X \times [0,1], \pi^*(\E), H)=\Rank(\E) \int_Y \alpha_0(x)dx - \left(\eta(D^\E_{H_1})
-\eta(D^\E_{H_0})\right)\\
\end{equation}
We next {{show}} that ${\rm Sign}(X \times [0,1], \pi^*(\E), H)=0$. This follows as in the proof of Theorem \ref{thm:indept} from the fact that
the image of the relative twisted cohomology $H^\bullet(Y, \partial Y, \pi^*(\E), H)$ in the
absolute  twisted cohomology $H^\bullet(Y,  \pi^*(\E), H)$ is zero. We argue again this using $\iota_s : Y \to Y$  defined as $\iota_s(x, t) = (x, st)$. By the 
independence of the twisted absolute and relative cohomology on the flux form $H$, Lemma \ref{lem:fluxindep}, 
$\iota_0^* = \iota_1^*= {\rm Id} :  
H^\bullet(Y,  \pi^*(\E), H) \to H^\bullet(Y,  \pi^*(\E), H)$. On the other hand, one has
$0 = \iota_0^*\big|_{{\rm Image}(H^\bullet(Y, \partial Y, \pi^*(\E), \pi^*(H)))}$. Therefore
\begin{equation}\label{eqn:sig2a}
{\rm Sign}(X \times [0,1], \pi^*(\E), H)=0.
\end{equation}
On the other hand, it has been shown in equation (2.3) of \cite{APS2} that
\begin{equation}\label{eqn:sig3a}
0=\eta(D^\E)
-\eta(D^\E) =  \Rank(\E)\int_Y \alpha_0(x)dx.
\end{equation}
Combing \eqref{eqn:sig1a}, \eqref{eqn:sig2a} and  \eqref{eqn:sig3a}, the theorem follows.
\end{proof}

\begin{corollary}
Let $X$ be a compact, oriented Riemannian manifold of odd dimension and $\E$,
a flat hermitian vector bundle over $X$.
Suppose $H$ is a closed differential form on $X$ of odd degree and of the form above.
Then the twisted rho invariant
$\rho(X,\E,H)$ is a differential invariant of the triple $(X, \E, [H])$, {{i.e. given another smooth Riemannian manifold of odd dimension and a diffeomorphism $f:X'\to X$, the following equality holds
$$
\rho (X, \E, H) = \rho (X', f^*\E, f^*H),
$$
when $f^*\E$ is induced with the pulled-back flat connection.}}
\end{corollary}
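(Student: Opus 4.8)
The plan is to deduce this naturality statement directly from the two stability theorems just proved together with the elementary functoriality of all the ingredients under a diffeomorphism. First I would observe that a diffeomorphism $f\colon X'\to X$ carries a Riemannian metric $g_X$ and a Hermitian metric $g_\E$ to a Riemannian metric $f^*g_X$ on $X'$ and a Hermitian metric $f^*g_\E$ on $f^*\E$, and it carries the flat connection $\nabla^\E$ to the pulled-back flat connection $\nabla^{f^*\E}$. Likewise $f^*H$ is a closed odd-degree form of the ``form above'', since pullback commutes with $d$ and preserves the degree and the real/imaginary decomposition described before Theorem \ref{thm:indept-H}. The key point is that $f$ intertwines the twisted signature operators: the Hodge star, the involution $\tau$, and the operator $D^{\E}_H$ on $\Omega^{2h}(X,\E)$ are all built naturally out of $(g_X, g_\E, \nabla^\E, H)$, so $f^*$ conjugates $D^{\E}_{H}$ (defined using $g_X,g_\E$) into $D^{f^*\E}_{f^*H}$ (defined using $f^*g_X, f^*g_\E$). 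Since conjugation by a unitary (after rescaling $f^*$ to a unitary isomorphism of the relevant $L^2$ spaces) preserves the spectrum with multiplicities, it preserves the eta invariant, and applying this to both $\E$ and the trivial line bundle gives
\[
\rho(X',f^*\E,f^*H)\big|_{(f^*g_X,\,f^*g_\E)}=\rho(X,\E,H)\big|_{(g_X,\,g_\E)}.
\]

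Next I would invoke Theorem \ref{thm:indept}: the right-hand side is independent of the choice of $(g_X,g_\E)$, and the left-hand side is independent of the choice of metrics on $X'$ and $f^*\E$. In particular the left-hand side, computed with any metrics on $X'$, equals the left-hand side computed with the special metrics $(f^*g_X, f^*g_\E)$, and similarly on the right. Hence the displayed equality holds for arbitrary choices of metrics on both sides, which is exactly the assertion that $\rho(X,\E,H)=\rho(X',f^*\E,f^*H)$ as differential invariants. If one also wants to allow $H$ to be replaced by a cohomologous flux of the same form, Theorem \ref{thm:indept-H} handles that, so only the class $[H]$ matters, and $[f^*H]=f^*[H]$; thus $\rho$ depends only on the diffeomorphism class of the triple $(X,\E,[H])$.

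The only genuinely routine-but-necessary step is verifying the intertwining identity $f^* \circ D^{\E}_{H} = D^{f^*\E}_{f^*H}\circ f^*$. This follows from naturality of each constituent: $f^*(\star_{g_X}\omega)=\star_{f^*g_X}(f^*\omega)$ up to the usual orientation/Jacobian bookkeeping (choosing $f$ orientation-preserving, or tracking the sign otherwise), $f^*\circ\nabla^\E=\nabla^{f^*\E}\circ f^*$ because pullback of a flat connection is the canonical flat connection on the pulled-back bundle, $f^*(H\wedge\,\cdot\,)=(f^*H)\wedge f^*(\cdot)$, and $f^*$ intertwines the Hermitian sharp isomorphisms once $f^*g_\E$ is used downstairs; combining these through the formula $D^{\E}_H=i^{m}(-1)^{h}(\nabla^{\E,H}\star-\star\nabla^{\E,-\overline H})$ gives the claim. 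I expect this bookkeeping to be the main (though minor) obstacle: one must be careful that $f^*$ as a map of $L^2$-completions needs to be rescaled by the square root of the Jacobian of $f$ (relative to the two volume forms) to become unitary, but since this rescaling is a positive multiplication operator it does not affect which metric-independent quantity is being compared, and the eta invariant, being defined purely spectrally via $\eta(s,A)=\Tr'(A|A|^{-s-1})$, is manifestly invariant under unitary conjugation. With the intertwining in hand, the corollary is immediate from Theorems \ref{thm:indept} and \ref{thm:indept-H}.
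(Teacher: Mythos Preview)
Your proposal is correct and follows essentially the same approach as the paper: use metric independence (Theorem \ref{thm:indept}) to reduce to the pulled-back metrics, observe that with these choices the twisted signature operators are conjugate, and conclude that the eta (hence rho) invariants agree. One small remark: when you equip $X'$ with $f^*g_X$ and $f^*\E$ with $f^*g_\E$, the pullback $f^*$ is already an isometry of the relevant $L^2$ spaces, so no Jacobian rescaling is needed; your concern there is unnecessary but harmless.
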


\begin{proof}\
{{We have proven that $\rho (X', f^*\E, f^*H)$ does neither depend on the chosen hermitian structure of $f^*\E$ nor on the given riemannian metric of $X'$. Therefore, we can compute it by  pulling-back under $f$ the hermitian structure of $\E$ and the riemannian metric of $X$. But the twisted signature operator $D^{f^*\E}_{f^*H}$ obtained on $X'$ using these pulled-back structures, is then conjugated with $D^\E_H$. Henceforth, the proof is complete.
}}
\end{proof}

\subsection{Functorial properties of rho invariant}\label{sect:func}

Here we state and prove the basic functorial properties of the rho invariant
 for the twisted signature complex.

\begin{proposition}
Let $X$ be a compact, oriented Riemannian manifold of odd dimension and $\E_1,\E_2$,
flat Hermitian or orthogonal vector bundles on $X$.
Suppose $H$ is a closed odd-degree form on $X$ {{of the above form}}.
Then 
$$
\eta(D^{\E_1\oplus\E_2}_H) = \eta(D^{\E_1}_H) + \eta(D^{\E_2}_H)\text{ and }\rho(X,\E_1\oplus\E_2,H)=\rho(X,\E_1,H)+ \rho(X,\E_2,H).
$$
\end{proposition}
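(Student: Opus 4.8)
This is an additivity statement for the eta invariant (and hence the rho invariant) under direct sums of flat bundles, and it should follow from the fact that the twisted odd signature operator is ``block diagonal'' with respect to the decomposition $\E_1 \oplus \E_2$. First I would observe that the canonical flat connection on $\E_1 \oplus \E_2$ is the direct sum $\nabla^{\E_1} \oplus \nabla^{\E_2}$, and that one may choose the Hermitian metric on $\E_1 \oplus \E_2$ to be the orthogonal direct sum of the given metrics on $\E_1$ and $\E_2$ (this is legitimate because, by Theorem \ref{thm:indept}, $\eta(D^\E_H)$ enters $\rho$ only through $\rho$, which is metric-independent, and in any case the eta invariant itself is being compared for a specific natural choice of metric). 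With this choice, the Hodge star $\star_{\E_1\oplus\E_2}$, the isomorphism $\sharp$, and hence $\star_{\E_1\oplus\E_2} = \star \sharp$ all respect the splitting $\Omega^\bullet(X,\E_1\oplus\E_2) = \Omega^\bullet(X,\E_1) \oplus \Omega^\bullet(X,\E_2)$, and $H\wedge\cdot$ acts diagonally since it is a scalar operation on the bundle factor. Consequently
$$
D^{\E_1\oplus\E_2}_H = D^{\E_1}_H \oplus D^{\E_2}_H
$$
as self-adjoint elliptic operators on $\Omega^{even}(X,\E_1) \oplus \Omega^{even}(X,\E_2)$.

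**Main step.** Granting this block decomposition, the spectrum of $D^{\E_1\oplus\E_2}_H$ is the union (with multiplicity) of the spectra of $D^{\E_1}_H$ and $D^{\E_2}_H$. Then for $\Re(s)$ large the eta function is additive,
$$
\eta(s, D^{\E_1\oplus\E_2}_H) = \eta(s, D^{\E_1}_H) + \eta(s, D^{\E_2}_H),
$$
directly from the definition $\eta(s,A) = \Tr'(A|A|^{-s-1})$, since the trace over the orthogonal complement of the kernel splits as a sum over the two summands. By Theorem \ref{zeta-hol} each of the three eta functions is holomorphic at $s=0$, so the identity persists under meromorphic continuation and evaluation at $s=0$, giving $\eta(D^{\E_1\oplus\E_2}_H) = \eta(D^{\E_1}_H) + \eta(D^{\E_2}_H)$. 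For the rho invariant, recall $\rho(X,\E,H) = \eta(D^\E_H) - \Rank(\E)\,\eta(D_H)$; applying additivity of $\eta$ and of the rank, $\Rank(\E_1\oplus\E_2) = \Rank(\E_1) + \Rank(\E_2)$, yields
$$
\rho(X,\E_1\oplus\E_2,H) = \eta(D^{\E_1}_H) + \eta(D^{\E_2}_H) - \bigl(\Rank(\E_1)+\Rank(\E_2)\bigr)\eta(D_H) = \rho(X,\E_1,H) + \rho(X,\E_2,H).
$$

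**The main obstacle.** The only genuine point requiring care is the verification that $D^{\E_1\oplus\E_2}_H$ really is block diagonal, i.e.\ that there are no off-diagonal terms coupling the $\E_1$ and $\E_2$ summands. This hinges entirely on choosing the Hermitian metric on the direct sum to be block diagonal — with a non-split metric the isomorphism $\sharp \colon \E_1\oplus\E_2 \to (\E_1\oplus\E_2)^*$ would mix the factors and the argument would break at the level of $\eta$ for that particular metric, though $\rho$ would still be unchanged by metric independence. I would therefore state explicitly at the outset that the eta-invariant additivity is asserted for the product/block-diagonal metric (which is compatible with the flat structure when the $\E_i$ carry compatible metrics), and note that the rho-invariant additivity is then metric-independent by Theorem \ref{thm:indept}. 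Everything else — the behaviour of $\star$, $H\wedge\cdot$, and the trace under a direct sum — is routine.
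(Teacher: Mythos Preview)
Your proposal is correct and follows exactly the same route as the paper: observe that with the direct-sum metric and connection one has $D^{\E_1\oplus\E_2}_H = D^{\E_1}_H \oplus D^{\E_2}_H$ as a block-diagonal operator on $\Omega^\bullet(X,\E_1)\oplus\Omega^\bullet(X,\E_2)$, and then conclude by additivity of the eta function and of the rank. The paper states the block-diagonal identity and says ``the proposition follows''; you have simply spelled out the details (choice of metric, behaviour of $\star$ and $H\wedge\cdot$, persistence under meromorphic continuation), which is entirely appropriate.
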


\begin{proof}
On $\Omega^\bullet(X,\E_1\oplus\E_2)\cong\Omega^\bullet(X,\E_1)\oplus
\Omega^\bullet(X,\E_2)$, the odd signature
 operator $D^{\E_1\oplus\E_2}_H=
D^{\E_1}_H\oplus D^{\E_2}_H$ is block-diagonal.
The proposition follows.
\end{proof}

\begin{proposition}
Let $X_1$, $X_2$ be two compact oriented Riemannian manifolds of odd dimension with the same universal
covering manifold.
Suppose the fundamental group $\pi_1(X_1)$ is a subgroup of $\pi_1(X_2)$.
Let $\alpha_1$ be a representation of $\pi_1(X_1)$ and let $\alpha_2$ be the
induced representation of $\pi_1(X_2)$.
Denote the flat vector bundles associated with $\alpha_1$, $\alpha_2$ by
$\E_1$, $\E_2$, respectively.
Suppose the closed odd-degree forms $H_1$ on $X_1$ and $H_2$ on $X_2$
pull-back to the same form on the universal covering.
Then 
$$
\eta(D^{\E_1}_{H_1}) = \eta(D^{\E_2}_{H_2}), \qquad \rho(X_1,\E_1,H_1)=\rho(X_2,\E_2,H_2).
$$
\end{proposition}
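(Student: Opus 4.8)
The plan is to reduce this to the corresponding statement for the untwisted operators (which is a classical fact about the ordinary signature/rho invariant of covering spaces, essentially equation (2.3) and the discussion in \cite{APS2}) together with the observation that the eta invariant of $D^{\E_i}_{H_i}$ is entirely determined by data on the common universal cover $\widetilde X := \widetilde{X_1} = \widetilde{X_2}$ and the representations. First I would recall the key point behind induced representations: if $\pi_1(X_1) = \Gamma_1 \subset \Gamma_2 = \pi_1(X_2)$ with finite index $[\Gamma_2 : \Gamma_1] = k$, then the representation $\alpha_2 = \operatorname{Ind}_{\Gamma_1}^{\Gamma_2} \alpha_1$ has the property that the flat bundle $\E_2 \to X_2$ pulled back to the finite cover $X_2' := \widetilde X / \Gamma_1$ of $X_2$ (which is diffeomorphic to $X_1$ as a smooth manifold, since both are $\widetilde X / \Gamma_1$) decomposes as a direct sum of $k$ copies of $\E_1$ twisted by the deck coset translates. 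Because $H_1$ and $H_2$ pull back to the same form $\widetilde H$ on $\widetilde X$, the pulled-back flux on $X_2'$ agrees with $H_1$ under the identification $X_2' \cong X_1$.

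The main calculation is then a multiplicativity argument for the twisted eta invariant under the finite covering $\pi\colon X_2' \to X_2$. Equip $X_2$ with any Riemannian metric and $\E_2$ with any flat Hermitian metric, and pull both back to $X_2'$; this is legitimate since $\eta(D^{\E_i}_{H_i})$ and hence $\rho$ are metric-independent by Theorem~\ref{thm:indept} and Theorem~\ref{thm:indept-H}. Under $\pi$ the twisted signature operator satisfies $\pi^* D^{\E_2}_{H_2} = D^{\pi^*\E_2}_{\pi^*H_2}$, and the spectrum of the operator upstairs is the spectrum downstairs with multiplicities scaled, so $\eta(D^{\pi^*\E_2}_{\pi^*H_2}) = k\,\eta(D^{\E_2}_{H_2})$; similarly $\eta(D_{\pi^*H_2}) = k\,\eta(D_{H_2})$ for the trivial-coefficient operator, whence $\rho(X_2', \pi^*\E_2, \pi^*H_2) = k\,\rho(X_2, \E_2, H_2)$ (one must check the eta functions themselves agree up to the factor $k$, which follows because the heat trace $\operatorname{Tr}(D e^{-t D^2})$ on $X_2'$ is $k$ times that on $X_2$ since the heat kernel is a local object and $\pi$ is a local isometry). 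On the other hand, by Frobenius reciprocity / the decomposition of the induced representation restricted back to $\Gamma_1$, one has $\pi^*\E_2 \cong \bigoplus_{j=1}^{k} \E_1^{(j)}$ where each $\E_1^{(j)}$ is isomorphic to $\E_1$ as a flat bundle (they differ by conjugation by coset representatives, which is an isomorphism of flat bundles over $X_1$), and the flux $\pi^*H_2$ corresponds to $H_1$; hence by the additivity proposition above, $\rho(X_2', \pi^*\E_2, \pi^*H_2) = k\,\rho(X_1, \E_1, H_1)$, and likewise $\eta(D^{\pi^*\E_2}_{\pi^*H_2}) = k\,\eta(D^{\E_1}_{H_1})$. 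Comparing the two computations gives $k\,\rho(X_2,\E_2,H_2) = k\,\rho(X_1,\E_1,H_1)$ and $k\,\eta(D^{\E_2}_{H_2}) = k\,\eta(D^{\E_1}_{H_1})$, and dividing by $k$ finishes the proof. (If the index is infinite one instead argues directly on $\widetilde X$ using the $L^2$/von Neumann framework as in \cite{APS2,BR}; I would handle only the finite-index case in detail and remark on the general case.)

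The step I expect to be the main obstacle is making the identification $\pi^*\E_2 \cong \bigoplus_j \E_1^{(j)}$ with the flux $\pi^*H_2$ matching $H_1$ fully rigorous at the level of \emph{twisted} de Rham complexes — i.e. checking that the twisted signature operator genuinely becomes block diagonal with each block unitarily equivalent to $D^{\E_1}_{H_1}$ — since the twisted differential $\nabla^{\E,H}$ mixes form degrees and one must verify that conjugation by coset representatives really is a chain isomorphism intertwining the relevant operators. The cleanest route is to do everything equivariantly on $\widetilde X$: realize all the operators in question as $\Gamma_i$-invariant operators on $E \otimes \Omega^\bullet(\widetilde X)$ built from the single form $\widetilde H$, use that induction is just $\operatorname{Ind}_{\Gamma_1}^{\Gamma_2} E = \mathbb{C}[\Gamma_2] \otimes_{\mathbb{C}[\Gamma_1]} E$ so that the $\Gamma_2$-invariants of $(\operatorname{Ind} E) \otimes \Omega^\bullet(\widetilde X)$ are canonically the $\Gamma_1$-invariants of $E \otimes \Omega^\bullet(\widetilde X)$, and then observe that this canonical identification intertwines $D^{\E_2}_{H_2}$ (viewed upstairs) with $D^{\E_1}_{H_1}$ exactly. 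This makes the equality of eta invariants immediate and the rho statement follows by subtracting $\operatorname{Rank}(\E_i)$ times the trivial-coefficient term, which is also manifestly the same on both sides since it only depends on $\widetilde X$, $\widetilde H$ and the metric.
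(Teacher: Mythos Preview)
Your multiplicativity step contains a genuine error. The claim ``$\eta(D^{\pi^*\E_2}_{\pi^*H_2}) = k\,\eta(D^{\E_2}_{H_2})$'' is false, and the justification ``the heat trace on $X_2'$ is $k$ times that on $X_2$ since the heat kernel is a local object'' is exactly where it fails. The heat kernel on a finite cover is \emph{not} the pullback of the heat kernel on the base: by the Selberg principle it differs by a sum over nontrivial deck transformations, so neither the pointwise diagonal value nor the trace is multiplicative. If your reasoning were valid it would apply to any flat bundle on $X_2$, in particular to the trivial line bundle, giving $\eta(D_{H_1}\text{ on }X_1)=k\,\eta(D_{H_2}\text{ on }X_2)$; but already for $H=0$ this fails (e.g.\ $S^3\to L(p,q)$ has $\eta(S^3)=0$ while $\eta(L(p,q))\ne 0$). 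Your companion claim $\pi^*\E_2\cong\bigoplus_j\E_1^{(j)}$ with each summand isomorphic to $\E_1$ is also only correct when $\Gamma_1\trianglelefteq\Gamma_2$; for a non-normal subgroup Mackey's decomposition of $\mathrm{Res}\,\mathrm{Ind}$ involves inductions from the intersections $\Gamma_1\cap g\Gamma_1 g^{-1}$, not copies of $\alpha_1$.

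By contrast, the ``cleanest route'' you sketch in your final paragraph is correct and is precisely the paper's argument. One pulls the Riemannian metric back from $X_2$ to $X_1$ along the covering $p\colon X_1\to X_2$, and then the Frobenius-type identification
\[
\bigl((\mathrm{Ind}_{\Gamma_1}^{\Gamma_2}E_1)\otimes\Omega^\bullet(\widetilde X)\bigr)^{\Gamma_2}\;\cong\;\bigl(E_1\otimes\Omega^\bullet(\widetilde X)\bigr)^{\Gamma_1}
\]
is a unitary isomorphism $\Omega^\bullet(X_2,\E_2)\cong\Omega^\bullet(X_1,\E_1)$ (this is Theorem~2.6 in \cite{RS}, which the paper cites). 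Since $H_1$ and $H_2$ pull back to the same $\widetilde H$, this isomorphism intertwines $D^{\E_2}_{H_2}$ with $D^{\E_1}_{H_1}$; the two operators are therefore unitarily equivalent, their spectra coincide, and the equality of eta and rho invariants is immediate. Drop the covering/multiplicativity detour entirely and present this direct argument --- no finite-index hypothesis, no case distinction, and no appeal to metric independence is needed.
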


\begin{proof}
By assumption, there is a fibre bundle $p: X_1 \to X_2$ with fibre the homogeneous set $\pi_1(X_2)/\pi_1(X_1)$.
A Riemannian metric on $X_2$ induces one on $X_1$ and since $\alpha_2$ is induced from $\alpha_1$, then 
the canonical isomorphism 
$\Omega^\bullet(X_1,\E_1,H_1)\cong\Omega^\bullet(X_2,\E_2,H_2)$
is an isometry as explained in Theorem 2.6 in \cite{RS} and therefore we deduce that the
spectrums of the twisted odd signature operators
$D^{\E_1}_{H_1}$ and 
$D^{\E_2}_{H_2}$  coincide. The proposition follows.
\end{proof}

\begin{proposition}
In addition to the conditions of Proposition~\ref{prop:ku}, assume that both
$X_1$ and $X_2$ are closed, oriented Riemannian manifolds, dimension of $X_1$ is odd and 
dimension of $X_2$ is even.
Then we have
\begin{align*}
\eta(D^{\E_1\boxtimes\E_2}_{H_1\boxplus H_2}) & = 
\eta(D^{\E_1}_{H_1})\, {\rm Sign}(X_2,\E_2),\\
\rho (X_1\times X_2,\E_1\boxtimes\E_2,H_1\boxplus H_2) & =
\rho(X_1,\E_1,H_1)\, {\rm Sign}(X_2,\E_2).
\end{align*}
\end{proposition}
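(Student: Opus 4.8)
The plan is to deduce the $\rho$-identity from the $\eta$-identity, and to prove the $\eta$-identity by the heat-kernel form of the Atiyah--Patodi--Singer multiplicativity argument, exploiting the product structure of the twisted signature complex.

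\emph{From the $\eta$-identity to the $\rho$-identity.} Write $M=X_1\times X_2$, $\E=\E_1\boxtimes\E_2$, $H=H_1\boxplus H_2$, and observe that $\Rank(\E_1\boxtimes\E_2)=\Rank(\E_1)\Rank(\E_2)$, while $\E$ with $\E_1,\E_2$ replaced by trivial line bundles is the trivial line bundle on $M$. Granting the $\eta$-identity and applying it once with $(\E_1,\E_2)$ and once with the trivial bundles, Definition~\ref{DefRho} gives
\begin{align*}
\rho(M,\E,H)
&=\eta(D^{\E_1\boxtimes\E_2}_{H_1\boxplus H_2})-\Rank(\E_1)\Rank(\E_2)\,\eta(D_{H_1\boxplus H_2})\\
&=\eta(D^{\E_1}_{H_1})\,\Sign(X_2,\E_2)-\Rank(\E_1)\Rank(\E_2)\,\eta(D_{H_1})\,\Sign(X_2).
\end{align*}
By the signature identity $\Sign(X_2,\E_2)=\Rank(\E_2)\,\Sign(X_2)$ established in Section~2, the right-hand side equals $\Sign(X_2,\E_2)\bigl(\eta(D^{\E_1}_{H_1})-\Rank(\E_1)\,\eta(D_{H_1})\bigr)=\Sign(X_2,\E_2)\,\rho(X_1,\E_1,H_1)$. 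Since $\rho$ is a differential invariant (Theorem~\ref{thm:indept}), no compatibility of metrics is needed for this identity; the $\eta$-identity, which involves honest eta invariants, is to be read with a product metric on $M$.

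\emph{Proof of the $\eta$-identity.} Give $M$ a product metric, $\E$ a product Hermitian metric, and represent $H$ by $\pi_1^*H_1+\pi_2^*\overline{H_2}$, which is again of the required form $\sum i^{j+1}H_{2j+1}$ with real-valued components, since $\overline{H_2}$ has this property whenever $H_2$ does. At the chain level $(\Omega^\bullet(M,\E),\nabla^{\E,H})$ is the graded tensor product of $(\Omega^\bullet(X_1,\E_1),\nabla^{\E_1,H_1})$ and $(\Omega^\bullet(X_2,\E_2),\nabla^{\E_2,\overline{H_2}})$ — the product decomposition underlying the twisted Künneth theorem (Proposition~\ref{prop:ku}) — and the Hodge stars and signature involutions factor accordingly. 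Unwinding the formula of Lemma~\ref{lem:bdry-iden} on the product, the odd signature operator of $M$ takes, on \emph{all} $\E$-valued forms of $M$, the product form
$$
D^\E_H \;=\; D^{\E_1}_{H_1}\otimes\tau_2 \;+\; \tau_1\otimes B^{\E_2}_{\overline{H_2}},
$$
where $\tau_i$ is the signature involution of $X_i$ and $B^{\E_2}_{\overline{H_2}}=\nabla^{\E_2,\overline{H_2}}+(\nabla^{\E_2,\overline{H_2}})^\dagger$; here $D^\E_H$ and $D^{\E_1}_{H_1}$ are taken on all forms, which is harmless because on an odd-dimensional manifold the even- and odd-degree parts of the odd signature operator have equal eta invariants (the Remark following Lemma~\ref{lem:bdry-iden}). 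Since $D^{\E_1}_{H_1}$ commutes with $\tau_1$ (a consequence of that Remark, which also shows $D^\E_H$ preserves the parity of forms) while $\tau_2$ anticommutes with $B^{\E_2}_{\overline{H_2}}$ (the Proposition of Section~2 characterizing admissible fluxes by $B^\E_H\tau=-\tau B^\E_H$), the cross terms cancel in the square, $(D^\E_H)^2=(D^{\E_1}_{H_1})^2\otimes 1+1\otimes (B^{\E_2}_{\overline{H_2}})^2$, hence $e^{-t(D^\E_H)^2}=e^{-t(D^{\E_1}_{H_1})^2}\otimes e^{-t(B^{\E_2}_{\overline{H_2}})^2}$ and
\begin{multline*}
\Tr\bigl(D^\E_H e^{-t(D^\E_H)^2}\bigr)=\Tr\bigl(D^{\E_1}_{H_1}e^{-t(D^{\E_1}_{H_1})^2}\bigr)\,\Tr\bigl(\tau_2 e^{-t(B^{\E_2}_{\overline{H_2}})^2}\bigr)\\
+\Tr\bigl(\tau_1 e^{-t(D^{\E_1}_{H_1})^2}\bigr)\,\Tr\bigl(B^{\E_2}_{\overline{H_2}}e^{-t(B^{\E_2}_{\overline{H_2}})^2}\bigr).
\end{multline*}
The last heat trace vanishes because $B^{\E_2}_{\overline{H_2}}$ anticommutes with $\tau_2$; the other is, by McKean--Singer, the $t$-independent integer $\Index\bigl(B^{\E_2}_{\overline{H_2}}\colon\Omega_+(X_2,\E_2)\to\Omega_-(X_2,\E_2)\bigr)$ — well defined because $\overline{H_2}$ is of the admissible form, so $B^{\E_2}_{\overline{H_2}}$ carries $\Omega_+$ to $\Omega_-$ — which, by homotopy invariance of the index (its principal symbol being that of the untwisted signature operator) and the classical signature theorem with local coefficients, equals $\Sign(X_2,\E_2)$. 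Thus $\Tr(D^\E_H e^{-t(D^\E_H)^2})=\Sign(X_2,\E_2)\,\Tr(D^{\E_1}_{H_1}e^{-t(D^{\E_1}_{H_1})^2})$ for all $t>0$; inserting this into the Mellin representation of the eta function gives $\eta(s,D^\E_H)=\Sign(X_2,\E_2)\,\eta(s,D^{\E_1}_{H_1})$ as meromorphic functions, and evaluating at $s=0$ (legitimate by Theorem~\ref{zeta-hol}), then dividing out the factor $2$ by which, on each of $M$ and $X_1$, the all-forms eta invariant exceeds its even-degree part, yields $\eta(D^\E_H)=\Sign(X_2,\E_2)\,\eta(D^{\E_1}_{H_1})$.

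\emph{Main obstacle.} The conceptual skeleton is precisely the APS product argument; the genuinely technical point is the product formula for $D^\E_H$ on $M=X_1\times X_2$ — extracting the correct phases and signs in $D^\E_H=D^{\E_1}_{H_1}\otimes\tau_2+\tau_1\otimes B^{\E_2}_{\overline{H_2}}$ from the definition in Lemma~\ref{lem:bdry-iden}, and checking that this combination genuinely preserves the parity of forms on $M$ and that its two summands graded-anticommute so the cross terms disappear upon squaring. In the untwisted case this is classical but delicate; the only additional input in the twisted case is that $\pi_1^*H_1$ and $\pi_2^*\overline{H_2}$ graded-commute with all $\E$-valued forms and that $\overline{H_2}$ is again of the admissible form, so that the two relations ``$\tau_2$ anticommutes with $B^{\E_2}_{\overline{H_2}}$'' and ``$\tau_1$ commutes with $D^{\E_1}_{H_1}$'' continue to hold. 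Once the product formula is in place, the heat computation, the identification of the supertrace with $\Sign(X_2,\E_2)$, the Mellin transform, and the passage from $\eta$ to $\rho$ are all routine given the results already in hand.
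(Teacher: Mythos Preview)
Your proof is correct and follows essentially the same route as the paper: both identify the odd signature operator on the product as the APS-type ``product'' operator built from $D^{\E_1}_{H_1}$ and $B^{\E_2}_{\overline{H_2}}$, then invoke the standard multiplicativity of eta under such a splitting. The paper presents this as a $2\times2$ block matrix on $\Omega^{even}(X_1,\E_1)\boxtimes\Omega^\pm(X_2,\E_2)$ and simply cites the discussion following Theorem~4.5 of \cite{APS3}; you instead write the equivalent intrinsic form $D^\E_H=D^{\E_1}_{H_1}\otimes\tau_2+\tau_1\otimes B^{\E_2}_{\overline{H_2}}$ and spell out the APS heat-trace computation in full, which is a welcome expansion of what the paper leaves to a citation. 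Your derivation of the $\rho$-identity from the $\eta$-identity via $\Sign(X_2,\E_2)=\Rank(\E_2)\Sign(X_2)$ is also more explicit than the paper's one-line conclusion.
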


\begin{proof} Upon identifying 
$$
\Omega^\pm(X_1\times X_2,\E_1\boxtimes\E_2) \cong 
\Omega^{even}(X_1,\E_1)\boxtimes \Omega^\pm(X_2,\E_2),
$$
we can write the operator 
$$
D^{\E_1\boxtimes\E_2}_{H_1\boxplus H_2} = {D^{\E_1}_{H_1} \otimes 1\qquad
1\otimes B^{\E_2}_{H_2}
\choose\quad\; 1\otimes
{B^{\E_2}_{H_2}}^\dagger
 \qquad \;-D^{\E_1}_{H_1} \otimes 1}.
$$
By the discussion following Theorem 4.5 in \cite{APS3}, we conclude that
$$
\eta(D^{\E_1\boxtimes\E_2}_{H_1\boxplus H_2})  = 
\eta(D^{\E_1}_{H_1})\, {\rm Sign}(X_2,\E_2),
$$
and Proposition \ref{prop:ku} the proposition follows.
\end{proof}

\subsection{Relation to generalized geometry}

Recall that in generalized geometry \cite{Hi,Gua1}, the bundle
$TX\oplus T^*X$ has a bilinear form of signature $(n,n)$ given by
$$
\langle\xi_1+W_1,\xi_2+W_2\rangle:=(\xi_1(W_2)+\xi_2(W_1))/2,
$$
where for $i=1,2$, $\xi_i$ are $1$-forms and $W_i$ are vector fields on $X$. 
A {\em generalized metric} on $X$ is a reduction of the structure group
$O(n,n)$ to $O(n)\times O(n)$.
Equivalently, a generalized metric is a splitting of $TX\oplus T^*X$ to
a direct sum of two sub-bundles of rank $n$ so that the bilinear form is
positive on one and negative on the other.
The positive sub-bundle is the graph of $g+B\in\Gamma(\Hom(TX,T^*X))$, where
$g=g_X$ is a usual Riemannian metric on $X$ and $B$ is a $2$-form on $X$. 

As argued in \cite{MW}, we can conclude that deformation of $H$ by a $B$-field is equivalent
to deforming the usual metric to a generalized metric.
In this way, deformations of the usual metric and that of the flux by a
$B$-field are unified to a deformation of generalized metric. 
Theorems~\ref{thm:indept} and \ref{thm:indept-H} state that the torsion is
invariant under such a deformation.

\section{Spectral flow and calculations of the twisted eta invariant}
\label{sect:calc}

We want to get the formula for the dependence of the eta invariant $\eta(D^\E_H) $ on $[H]$.
We employ the method of 
spectral flow for a path of self-adjoint elliptic operators, which is generically the net number of eigenvalues
that cross zero, which was first defined by \cite{APS3}, to get a formula for the difference 
$\eta(D^\E_H) - \eta(D^\E)$. 

Let $(X, g)$ be an oriented Riemannian manifold. 
Define $S^{-H} \in \Omega^1(X, {\mathcal S}{\mathcal O}(TM))$, which is a degree one form with values in 
the skew-symmetric endomorphisms of $TM$, as follows. For $\alpha, \beta, \gamma \in TM$, set 
$$
g(S^{-H} (\alpha)\beta, \gamma) = - 2H(\alpha, \beta, \gamma).
$$
Let $\nabla^L$ denote the Levi-Civita connection of $(X, g)$ and $\nabla^{-H} = \nabla^L + S^{-H} $ 
be the Riemannian connection whose curvature is denoted by $\Omega_X^{-H}$.

\begin{proposition}\label{prop:getzler}
Consider the smooth path of self-adjoint elliptic operators $\{D^\E_{uH}, \, : \, u\in [0,1]\}$
on a $2\ell +1$ dimensional closed, oriented, Riemannian manifold, where $H$ is closed and degree $H$ is equal to $3$. Then
$$
{\rm sf}(D^\E, D^\E_H) = \frac{1}{(-2\pi i)^{\ell+1}} \int_X H \wedge \LL(\Omega_X^{-H}) + \frac{1}{2}(\eta(D^\E_H) - \eta(D^\E))
$$
where $\LL(\cdot)$ denotes the Atiyah-Hirzebruch characteristic polynomial, $\Omega_X^{-H}$ denotes the curvature of the 
Riemannian connection $\nabla^{-H}$, ${\rm sf}(D^\E, D^\E_H)$ denotes the spectral flow of the given path.
\end{proposition}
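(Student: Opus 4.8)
The plan is to express the spectral flow as the index of the operator suspended over a cylinder, to compute that index by the Atiyah--Patodi--Singer theorem, and then to reduce the resulting local contribution to a Chern--Simons transgression.

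First I would recall the identification of spectral flow with an index. For the smooth path $u\mapsto D^\E_{uH}$, $u\in[0,1]$, of first order self-adjoint elliptic operators on $X$ joining $D^\E=D^\E_0$ to $D^\E_H=D^\E_{1\cdot H}$ (reparametrising so that the path is constant near the endpoints), the spectral flow equals $\pm$ the index, with APS boundary projections at the two ends, of the suspended operator $\frac{\partial}{\partial u}+D^\E_{uH}$ on $X\times[0,1]$, which is a Dirac type operator of product form near the boundary. After perturbing the path slightly at its endpoints if necessary so that $D^\E$ and $D^\E_H$ are invertible --- which alters neither side of the claimed identity, by the usual bookkeeping of small eigenvalues --- the Atiyah--Patodi--Singer index theorem \cite{APS1} applied to this operator yields
\begin{equation*}
{\rm sf}(D^\E,D^\E_H)\;=\;\frac12\bigl(\eta(D^\E_H)-\eta(D^\E)\bigr)\;-\;\int_{X\times[0,1]}\beta_0(x,u)\,dx\,du,
\end{equation*}
where $\beta_0$ is the constant term in the short-time asymptotics of the relevant pointwise supertrace. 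The two eta contributions enter with opposite signs because $X\times\{1\}$ sits in $\partial(X\times[0,1])$ with the orientation opposite to that of $X\times\{0\}$, and they assemble into exactly the stated half-difference; the null space terms drop out by the invertibility arrangement. This is the shape of the spectral flow formula going back to \cite{APS3}, which is why the local term has to be evaluated by a local index (Getzler rescaling) computation.

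Second, I would identify the local density $\beta_0$. The suspended operator $\frac{\partial}{\partial u}+D^\E_{uH}$ differs from the untwisted odd signature operator suspended over the cylinder only through the zeroth order torsion term governed by $S^{-uH}$, and, since $\deg H=3$, its local index density is computed exactly as in Bismut's local index theorem for the signature operator with totally skew-symmetric torsion \cite{Bismut} (the degree three case recalled in Remark~\ref{rem:sig1}): it is the top degree component of $\LL(\Omega^{-uH}_{X\times[0,1]})$, where $\Omega^{-uH}_{X\times[0,1]}$ is the curvature of the connection on $X\times[0,1]$ whose restriction to $X\times\{u\}$ is $\nabla^{-uH}=\nabla^{L}+S^{-uH}$. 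The Chern character of the flat bundle $\E$ contributes only $\Rank(\E)$, which has already been cancelled against the untwisted term in the definition of $\rho$, so it does not appear. Then, writing this curvature as $\Omega^{-uH}_X+du\wedge S^{-H}$, extracting the $du$-component of $\LL(\Omega^{-uH}_X+du\wedge S^{-H})$ and integrating over $u\in[0,1]$ produces the Chern--Simons transgression form of the family $\{\nabla^{-uH}\}$; using $g(S^{-H}(\cdot)\,\cdot\,,\cdot\,)=-2H$ together with $\deg H=3$, this transgression collapses --- once the normalisation constant $(-2\pi i)^{-(\ell+1)}$ appropriate to $\dim(X\times[0,1])=2(\ell+1)$ and the overall sign are tracked --- to $-\int_{X\times[0,1]}\beta_0\,dx\,du=\frac{1}{(-2\pi i)^{\ell+1}}\int_X H\wedge\LL(\Omega^{-H}_X)$. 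Combined with the first step this is the asserted formula.

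The main obstacle is the second step: the rigorous identification of the local index density of the suspended operator with the $\LL$-form of the connection carrying the torsion $S^{-uH}$, uniformly in the parameter $u\in[0,1]$, and with the precise numerical constant. For $\deg H=3$ this is precisely the content of Bismut's theorem \cite{Bismut}, so it is available here; the first step is routine spectral flow bookkeeping, and the third is a purely algebraic transgression manipulation. (For $\deg H>3$ one would instead need the general local index computation deferred to the appendix, which is why the proposition is stated only in the degree three case.)
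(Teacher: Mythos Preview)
Your proposal is sound, but it takes a different route from the paper's own argument. The paper does not suspend over $X\times[0,1]$ and invoke the APS index theorem; instead it quotes Getzler's heat-kernel spectral flow formula (Theorem~2.6 of \cite{Getzler}) directly, which already reads
\[
{\rm sf}(\widetilde D^\E,\widetilde D^\E_H)=\Big(\frac{\epsilon}{\pi}\Big)^{1/2}\!\int_0^1\Tr\!\left([H\wedge\star-\star H\wedge]\,e^{-\epsilon (D^\E_{uH})^2}\right)du+\tfrac12\bigl(\eta(D^\E_H)-\eta(D^\E)\bigr),
\]
and then evaluates the $u$-integral by Getzler rescaling together with Bismut's local index theorem for non-K\"ahler manifolds \cite{Bismut}, via the formal substitutions $\epsilon^{1/2}[H\wedge\star-\star H\wedge]\rightsquigarrow H\wedge$, $e^{-(\epsilon^{1/2}D^\E_{uH})^2}\rightsquigarrow\LL(\Omega_X^{-H})\wedge e^{(d+uH)^2}$, $\Tr\rightsquigarrow\frac{-i\pi^{1/2}}{(-2\pi i)^{\ell+1}}\int_X\tr$. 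Both approaches therefore rest on the same analytic core (Bismut's local computation in the degree~$3$ case), but the paper's route packages the ``cylinder'' and the eta bookkeeping into Getzler's theorem and lands on the integrand $H\wedge\LL(\Omega_X^{-H})$ in one step, whereas your route obtains the same integrand only after identifying the local APS density on $X\times[0,1]$ and then performing the Chern--Simons transgression by hand. Your argument is more self-contained and makes the geometric origin of the $H\wedge\LL$ term transparent; the paper's is shorter but leans more heavily on \cite{Getzler}.
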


\begin{proof} By theorem 2.6 \cite{Getzler}, 
$$
{\rm sf}(\widetilde D^\E, \widetilde D^\E_H) = \left(\frac{\epsilon}{\pi}\right)^{1/2}\int_0^1 \Tr\left([H \wedge \star - \star H \wedge]
e^{-\epsilon {D^\E_{uH}}^2}\right) du
 + \frac{1}{2}(\eta(D^\E_H) - \eta(D^\E))
$$
where $\widetilde D^\E_H$ denotes the operator $ D^\E_H + P_{\E, H}$ where $P_{\E, H}$ denotes the 
orthogonal projection onto the nullspace of $ D^\E_H$. Following the idea of the proof of Theorem 2.8 in \cite{Getzler}
the technique of \cite{BGV}, and the Local Index Theorem 1.7 of Bismut \cite{Bismut}, we may make the following replacements:
\begin{align*}
\epsilon^{1/2} [H \wedge \star - \star H \wedge]\qquad & \text{by} \qquad H\wedge\\  
e^{- (\epsilon^{1/2}D^\E_{uH})^2} \qquad & \text{by} \qquad  \LL(\Omega_X^{-H}) \wedge e^{(d+uH)^2}\\
\Tr(\cdot) \qquad & \text{by} \qquad \frac{-i\pi^{1/2}}{(-2\pi i)^{\ell+1}} \displaystyle\int_X \tr(\cdot)
\end{align*}  
This then enables us to replace $ \left(\frac{\epsilon}{\pi}\right)^{1/2}\displaystyle\int_0^1 \Tr\left([H \wedge \star - \star H \wedge]
e^{-\epsilon {D^\E_{uH}}^2}\right) du$ by 
$$
 \frac{1}{(-2\pi i)^{\ell+1}} \int_X H \wedge \LL(\Omega_X^{-H}),
$$
and the proposition follows.
\end{proof}

As a special case of  Proposition \ref{prop:getzler}, one obtains the following calculation,

\begin{corollary}
Let $X$ is a compact oriented manifold of dimension 3 and
$\E$ is a flat vector bundle associated to an orthogonal or unitary
representation of $\pi_1(X)$.
Let $H$ be a closed $3$-form on $X$.
Consider the smooth path of self-adjoint elliptic operators $\{D^\E_{uH}, \, : \, u\in [0,1]\}$. Then
$$
\eta(D^\E_H) - \eta(D^\E) = 2\, {\rm sf}(D^\E, D^\E_H)  + \frac{h}{2\pi^2}
$$
where $h=[H] \in \mathbb R\cong H^3(X, \RE)$.
\end{corollary}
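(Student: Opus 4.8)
The plan is to specialize Proposition~\ref{prop:getzler} to the three-dimensional case $\dim X = 2\ell + 1$ with $\ell = 1$, and then to evaluate the characteristic-class integral explicitly. Setting $\ell = 1$ in the formula of Proposition~\ref{prop:getzler} gives
$$
{\rm sf}(D^\E, D^\E_H) = \frac{1}{(-2\pi i)^{2}} \int_X H \wedge \LL(\Omega_X^{-H}) + \frac{1}{2}\left(\eta(D^\E_H) - \eta(D^\E)\right),
$$
so that $(-2\pi i)^2 = -4\pi^2$. First I would observe that since $H$ is already a $3$-form on a $3$-manifold $X$, the wedge product $H \wedge \LL(\Omega_X^{-H})$ only picks out the degree-zero (constant) part of the $\LL$-polynomial, which is $1$. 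Hence $\int_X H \wedge \LL(\Omega_X^{-H}) = \int_X H = h$, where $h$ is the number representing $[H]$ under the identification $H^3(X,\RE) \cong \RE$ coming from integration over the closed oriented $3$-manifold $X$ (here one uses that $X$ is closed and oriented, so this top-degree integration pairing is an isomorphism, and that $\E$ orthogonal/unitary ensures the eta invariants and spectral flow are the relevant real quantities).

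Next I would substitute these two simplifications back into the displayed identity. This yields
$$
{\rm sf}(D^\E, D^\E_H) = \frac{h}{-4\pi^2} + \frac{1}{2}\left(\eta(D^\E_H) - \eta(D^\E)\right),
$$
and then solving for the eta-invariant difference gives
$$
\eta(D^\E_H) - \eta(D^\E) = 2\,{\rm sf}(D^\E, D^\E_H) + \frac{h}{2\pi^2},
$$
which is exactly the claimed formula. The only arithmetic to be careful about is the sign: $2 \cdot \frac{h}{4\pi^2} = \frac{h}{2\pi^2}$ with a plus sign, since we move the term $\frac{h}{-4\pi^2}$ to the other side. I would double-check that the orientation convention used in identifying $[H]$ with $h \in \RE$ is consistent with the one implicit in the integral $\int_X H$ appearing in Proposition~\ref{prop:getzler}.

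The substantive content here is entirely contained in Proposition~\ref{prop:getzler}, so this corollary is essentially a two-line specialization; there is no real obstacle. The one point deserving a sentence of justification is \emph{why} $\LL(\Omega_X^{-H})$ contributes only its constant term: because $H$ has degree $3$ and $\dim X = 3$, any positive-degree component of $\LL(\Omega_X^{-H})$ wedged with $H$ vanishes by degree reasons, independently of which Riemannian connection $\nabla^{-H}$ (hence which curvature $\Omega_X^{-H}$) is used. This is also why the curvature-dependent correction term disappears in dimension $3$ and the answer becomes a clean topological expression in $[H]$ alone together with the spectral flow.
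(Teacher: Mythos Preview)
Your proof is correct and is exactly the specialization the paper has in mind: the paper states the corollary ``as a special case of Proposition~\ref{prop:getzler}'' without writing out the details, and your computation---setting $\ell=1$, using that the degree-zero term of $\LL$ is all that survives against the top-degree form $H$, and solving for the eta difference---is precisely that specialization.
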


In the setting of the Corollary above, one knows that there is nontrivial spectral flow
as the twisted cohomology jumps at $u=0$. However, away from $u=0$, there is no spectral 
flow, and in this region of the path, the eta invariant $\eta(D^\E_{uH})$ is a smooth affine linear
function.

\section{Homotopy invariance}\label{sect:homotopy}

Our goal in this last section is to prove the homotopy invariance of our twisted rho invariant. Given a smooth closed oriented manifold $M$ of dimension $2m-1$, we denote  by $\Gamma$ the fundamental group of $M$ (with respect to some fixed base point) and we consider the  Mishchenko bundle $\maM:= \tM\times_\Gamma C^*\Gamma$. Here $C^*\Gamma$ is the maximal $C^*$-algebra of the group $\Gamma$. The twisted signature operator $D_H$ then gives rise to a regular operator $\maD_H$ on the $C^*\Gamma$ Hilbert module $L^2(M, \maM\otimes \Lambda^{even}T^*M)$.  

 If we are now given   a unitary representation $\sigma:\Gamma\to U(N)$ then using our twisted by $H$ signature operator on $M$ and the flat bundle $\E=\E_\sigma$ over $M$ associated with  $\sigma$, we may form as in Definition \ref{DefRho} the rho invariant $\rho (M, \E, H)$. Notice that the operator $D_H^\E$ can be recovered from $\maD_H$ as $D_H^\E=\maD_H\otimes_{C^*\Gamma} Id$ through the classical  identification \cite{BP}.
 In the same way, given a smooth map $h: M'\to M$ from a second closed oriented manifold $M'$ to $M$, we can pull-back the flat hermitian vector bundle $\E$ as well as the differential form $H$ and define the twisted rho invariant $\rho (M', h^*\E, h^*H)$.  Recall on the other hand the maximal Baum-Connes assembly map $\mu_{\Gamma, \rm{max}}$ associated with $\Gamma$ \cite{BaumConnes}.  We prove here the following 
 
\begin{theorem}\label{HomotopyInvariance}
Assume that $\Gamma$ is torsion free and that the maximal Baum-Connes map $\mu_{\Gamma, \rm{max}}$ is an isomorphism. If there exists an oriented  homotopy  equivalence $h:M'\to M$ between the smooth closed oriented $2m-1$ dimensional manifolds $M$ and $M'$, then
$$
\rho (M', h^*\E, h^*H) = \rho (M, \E, H).
$$
\end{theorem}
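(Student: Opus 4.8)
The plan is to follow the classical strategy for proving homotopy invariance of rho invariants via the analytic surgery/assembly approach, adapted to the twisted setting. First I would reduce the problem to a statement about $C^*\Gamma$-indices. The key observation, used already to relate $D^\E_H$ to $\maD_H$, is that the rho invariant $\rho(M,\E,H)$ can be recovered from the $C^*\Gamma$-valued (or rather $C^*\Gamma/\mathbb{C}$, i.e. reduced) eta-type invariant of the Mischenko-twisted operator $\maD_H$: writing $\sigma$ and the trivial representation of dimension $N$, one has $\rho(M,\E,H) = \eta_\sigma(\maD_H) - N\,\eta_{\mathrm{triv}}(\maD_H)$, so it suffices to show that the (suitably normalized) $C^*$-algebraic eta invariant of $\maD_H$ is a homotopy invariant once $\Gamma$ is torsion-free and $\mu_{\Gamma,\mathrm{max}}$ is an isomorphism. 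This is the twisted analogue of the theorem of Keswani, and of Piazza-Schick, for the untwisted signature operator.

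Next I would set up the relevant bordism/surgery argument. Using Corollary \ref{cor:rho}, if $M$ and $M'$ were twisted-signature bordant over $B\Gamma$ with compatible flux forms, the difference of rho invariants would be expressed via signatures of the twisted bordism, and the vanishing would follow from homotopy invariance of the twisted signature (the Corollary after Definition \ref{def:signature}, extended to manifolds with boundary). The homotopy equivalence $h:M'\to M$ does not directly produce such a bordism, but it produces an element in the structure set whose image under the assembly/index map vanishes; the hypothesis that $\mu_{\Gamma,\mathrm{max}}$ is an isomorphism is exactly what is needed to conclude that the relevant $C^*\Gamma$-index class of the twisted signature operator on the mapping cylinder (capped off appropriately) is zero, hence that the corresponding $\rho$-difference, which is the boundary correction term in the twisted APS formula of Corollary \ref{cor:sig} applied with $C^*\Gamma$-coefficients, vanishes. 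Concretely: form $W = $ mapping cylinder of $h$, a manifold with boundary $M' \sqcup (-M)$; equip it with $\pi^* H$ pulled back appropriately (using $[h^*H]=[H]$ and Lemma \ref{lem:fluxindep} to arrange boundary compatibility); run the twisted APS index theorem with coefficients in the Mischenko bundle; the index in $K_0(C^*\Gamma)$ is a homotopy invariant of the twisted signature operator (this is the twisted Mischenko-Kasparov symmetric signature, whose homotopy invariance follows because its construction only uses the twisted Poincar\'e duality pairing of Proposition \ref{prop:pd} together with the scale invariance of Proposition \ref{scaling}, not the metric); and under the assembly isomorphism hypothesis the higher-index correction terms on $M$ and $M'$ match, forcing $\rho(M',h^*\E,h^*H) = \rho(M,\E,H)$.

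The main obstacle — and the step requiring the most care — is establishing that the twisted symmetric signature class in $K_0(C^*\Gamma)$ is genuinely a homotopy invariant and behaves correctly under the assembly map. In the untwisted case this is Mischenko's theorem, but here the twisted de Rham complex $(\Omega^\bullet(M,\maM), \nabla^{\maM,H})$ is $\mathbb{Z}/2$-graded rather than $\mathbb{Z}$-graded, and the relevant Poincar\'e duality (Proposition \ref{prop:pd}) is a sesquilinear pairing between $H$-twisted and $(-\overline{H})$-twisted cohomology, so one must check that the algebraic Poincar\'e complex it defines over $C^*\Gamma$ — with the involution $H \mapsto -\overline H$ playing the role of the duality — is a well-defined symmetric (or rather $\epsilon$-symmetric, after the rescaling $H \mapsto H^{(i)}$ that makes the form hermitian) Poincar\'e complex, and that homotopy equivalences induce equivalences of such complexes. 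The scale-invariance Lemma \ref{scaling} and the identification of $\Sign(M,\E,H)$ with the honest topological signature (the Proposition near the end of Section 2) are the technical levers: they let me transport the problem to the point $\lambda = i$ where $-\overline{H^{(i)}} = H^{(i)}$ when $\deg H \equiv 1 \pmod 4$, and more generally reduce the twisted signature package to the untwisted one at the level of symbols, so that the $C^*$-algebraic index and its homotopy invariance follow from the established untwisted theory with the flat $C^*\Gamma$-coefficient system. Once that structural point is in place, the torsion-freeness of $\Gamma$ (to kill finite-order contributions to the structure set) and the surjectivity/injectivity of $\mu_{\Gamma,\mathrm{max}}$ (to identify the vanishing of the index class with the vanishing of the $\rho$-difference) complete the argument exactly as in \cite{BP, BR} and the work of Keswani.
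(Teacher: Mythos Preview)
Your proposal is in the right family of arguments but takes a genuinely different route from the paper, and as written it has a real gap.

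The paper does \emph{not} run a $C^*\Gamma$-valued APS theorem on a bordism. Instead it follows Higson--Roe: it builds, from the homotopy equivalence $h$, an explicit norm-continuous path $(\maJ_{t,H})_{0\le t\le 2}$ of self-adjoint operators on the direct-sum Hilbert $C^*\Gamma$-module, checks by hand that the twisted de Rham complexes with Mischenko coefficients are Hilbert--Poincar\'e complexes and that $\maA_H=e^{-\Delta_H}h^*e^{-\Delta'_{h^*H}}$ is an HP-equivalence (this is the content of the two Lemmas about $\maT_{t,H}$ and the invertibility of ${\hat\maB}_H\pm\maJ_{t,H}$), and thereby produces a class $[h,H]$ in the analytic structure group $\maS_1\Gamma$. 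The Baum--Connes hypothesis enters only through the Higson--Roe exact sequence, and the conclusion comes from identifying the Higson--Roe rho morphism $\maS_1\Gamma\to\RR$ evaluated on $[h,H]$ with the difference of $\xi$-invariants (the kernel-dimension defect being handled separately by homotopy invariance of twisted cohomology). No mapping cylinder, no higher APS formula, is used.

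Your route --- build a bordism $W$ with $\partial W=M'\sqcup(-M)$ and apply Corollary~\ref{cor:sig} with Mischenko coefficients --- would need two ingredients the paper does not supply. First, the ``mapping cylinder'' of $h$ is not a smooth manifold with boundary $M'\sqcup(-M)$; you would have to replace it by the cylinder $M'\times[0,1]$ and then compare the two boundary operators via $h$, which already pushes you back toward proving that the twisted higher signature class in $K_*(C^*\Gamma)$ is a homotopy invariant --- i.e.\ exactly the Hilbert--Poincar\'e step the paper carries out with the path $\maJ_{t,H}$. Second, and more seriously, a $C^*\Gamma$-coefficient APS index theorem for the \emph{twisted} signature operator (the analogue of Leichtnam--Piazza/Piazza--Schick in this setting) is not established anywhere in the paper; Corollary~\ref{cor:sig} is purely scalar. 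Invoking it is the step that would fail. Your observation that the twist $H$ does not change the principal symbol is correct and useful (it is precisely why $D_H$ is an admissible ``specific choice of Dirac operator'' in the Higson--Roe geometric cycles), but it does not by itself give you the higher APS formula you need. If you want to pursue your outline, the honest path is to prove that the twisted complex $(\Omega^\bullet(M,\maM),\nabla^{\maM,H})$ is a Hilbert--Poincar\'e complex over $C^*\Gamma$ and that $h$ induces an HP-equivalence --- at which point you are doing exactly what the paper does, and the Higson--Roe machinery finishes the job without any higher APS input.
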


When $H=0$ we recover a now classical theorem, see for instance \cite{Keswani}. The proof we have adopted here follows the new method introduced by Higson and Roe in \cite{HigsonRoeRho} which in turn simplifies  Keswani's original proof \cite{Keswani}. Since the constructions are easy generalizations of this proof, we shall be brief. Recall that we are only interested in  our twisted signature operator $D_H^\E$ acting of {\em{even} } forms. Therefore, it is easy to check that  our operator coincides with the signature operator considered in \cite{HigsonRoe1}. We may then write $\maD_H$ as well as $D_H^\E$ as
$$
\maD_H=i\maB_H \maJ \text{ and } D_H^\E = i B^\E_H J\text{ with } \maJ \text{ induced by } J=i^{p(p-1)+m-1} \star \text{ on } \Omega^p (M,\E).
$$
Then $\maJ$ is a self-adjoint involution, and  $\maB_H \pm \maJ$ are invertible operators as regular operators on the Hilbert module $L^2(M, \maM\otimes \Lambda^{even}T^*M)$ \cite{Lance}. Moreover, we have: 
$$
(\maD_H - i)(\maD_H+i)^{-1} = (\maB_H - \maJ)(\maB_H+\maJ)^{-1}.
$$
The same relation holds for the Hilbert space operator $D_H^\E$. 

The pull-back map $h^*$ on smooth forms does not - in general - induce a closable operator between the Hilbert spaces of $L^2$ forms and hence no more between the Michschenko Hilbert modules. We denote by $\Delta_H$ the Laplacian operator and we introduce  the operator $\maA_H:=e^{-\Delta_H}  h^* e^{-\Delta'_{h^*H}}$ which then extends to an adjointable operator, still denoted 
$$
\maA_H: L^2(M', \maM'\otimes \Lambda^{even}T^*M') \longrightarrow L^2(M, \maM\otimes \Lambda^{even}T^*M).
$$
Moreover, with our previous notations and extending the twisted de Rham operators to the Hilbert modules, we have  $\maA_H d'_{h^*H} = d_H \maA_H$. Using the representation $\sigma$, we also define the corresponding bounded operator $A_H^\E$ and we have the similar relation  $A_H^\E {\nabla'}_{h^*H}^{h^*\E} = \nabla_H^\E A_H^\E$. 
Using twisted Hodge theory \cite{MW}, it is easy to see that exactly as in the untwisted case, the operator $\maA_H$ induces an isomorphism between the twisted Hilbert module de Rham cohomologies. 

Following \cite{Keswani} (see also \cite{HigsonRoeRho}), we introduce a  path $(\maJ_{t, H})_{0\leq t\leq 2}$ of adjointable operators on the direct sum Hilbert module 
$$
L^2(M, \maM\otimes \Lambda^{even}T^*M) \oplus L^2(M', \maM'\otimes \Lambda^{even}T^*M')
$$ 
For   $0\leq t\leq 1/2$, we set
$$
\maJ_{t, H} = \left(\begin{array}{cc} \maJ & 0 \\ 0 & -\maJ ' - 2t (\maA_H^* \maJ \maA_H - \maJ ')
\end{array}\right);
$$
For $1/2\leq t \leq 1$, we set
$$
\maJ_{t, H} := \left(\begin{array}{cc} \sin (\pi t) \maJ & \cos (\pi t) \maJ \maA_H \\ \cos (\pi t) \maA_H^* \maJ & - \sin (\pi t) \maA_H^* \maJ \maA_H
\end{array}\right);
$$
and lastly for $1\leq t\leq 2$, we set
$$
\maJ_{t, H} :=   \left(\begin{array}{cc} 0 & \exp (\pi i t) \maJ \maA_H \\ \exp (\pi i t) \maA_H^* \maJ & 0
\end{array}\right).
$$
We also denote for $0\leq t \leq 1$ by $\maT_{t, H}$ the operator defined on $p$-forms by the equality
$$
\maJ_{t, H} = i ^{p(p-1)+m-1} \maT_{t, H}.
$$

\begin{lemma}
\begin{itemize}
\item The operator $\maT_{t, H}$ is self-adjoint. 
\item If ${\hat d}_H=d_H\oplus d'_{h^*H}$ and $H$ is pure imaginary, then the following formula holds on $p$-forms $
 \maT_{t, H} {\hat d}_H + (-1)^p ({\hat d}_H)^*  \maT_{t, H} = 0.$
\item Under the assumptions of the previous item, the operator induced by $\maT_{t,H}$ is an isomorphism from the homology of ${\hat d}_H$ to the homology of $({\hat d}_H)^*$.
\end{itemize}
\end{lemma}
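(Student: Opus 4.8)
The plan is to reduce all three assertions to facts already available and then to argue separately on the two explicit ranges $0\le t\le 1/2$ and $1/2\le t\le 1$, noting the (obvious) agreement at $t=1/2$. The facts I would use are: (i) the chain-level identity $\maT_{0,H}\,{\hat d}_H+(-1)^p({\hat d}_H)^*\maT_{0,H}=0$ at the endpoint $t=0$, where $\maT_{0,H}=\star\oplus(-\star')$; this is exactly the case $t=0$ of the statement and holds because $H$ is pure imaginary, so that the formula ${\nabla^{\E,H}}^\dagger=-\star\nabla^{\E,-\overline H}\star$ of Section~1 becomes a genuine graded commutation of $\star$ with the twisted differential, the powers of $i$ matching because $\dim M=2m-1$ is odd; (ii) the intertwining relations $\maA_H\,d'_{h^*H}=d_H\maA_H$ and, taking adjoints, $(d'_{h^*H})^*\maA_H^*=\maA_H^* d_H^*$; and (iii) the fact recalled above via twisted Hodge theory that $\maA_H$ induces an isomorphism on twisted de Rham (Hilbert-module) cohomology, and hence that $\maA_H^*$ does so on the dual complexes.

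Self-adjointness is a direct inspection. Recall from above that $\maJ$ and $\maJ'$ are self-adjoint involutions, so $\maA_H^*\maJ\maA_H$ is self-adjoint; on $0\le t\le 1/2$ the operator $\maJ_{t,H}$ is block-diagonal with self-adjoint entries $\maJ$ and $-(1-2t)\maJ'-2t\,\maA_H^*\maJ\maA_H$, while on $1/2\le t\le 1$ its diagonal entries $\sin(\pi t)\maJ$ and $-\sin(\pi t)\maA_H^*\maJ\maA_H$ are self-adjoint and its off-diagonal entries $\cos(\pi t)\maJ\maA_H$ and $\cos(\pi t)\maA_H^*\maJ=\cos(\pi t)(\maJ\maA_H)^*$ are mutually adjoint; rescaling by $i^{-(p(p-1)+m-1)}$ on $p$-forms preserves this because $2m-1$ is odd (so $p(n-p)$ is even and the competing powers of $i$ cancel). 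For the chain relation one expands $\maT_{t,H}\,{\hat d}_H+(-1)^p({\hat d}_H)^*\maT_{t,H}$ blockwise; each block collapses to fact (i), possibly conjugated by $\maA_H$: for instance the lower-right block over $1/2\le t\le 1$ equals $-\sin(\pi t)\,\maA_H^*\bigl(\maJ d_H+(-1)^p d_H^*\maJ\bigr)\maA_H=0$ after commuting $\maA_H$ past $d_H$ and $\maA_H^*$ past $d_H^*$ by fact (ii), and the off-diagonal blocks reduce in the same way; the block-diagonal range is identical with the convex combination in place of a single term.

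For the third assertion, items (1)–(2) force $\maT_{t,H}$ to carry $\ker{\hat d}_H$ into $\ker({\hat d}_H)^*$ and $\im{\hat d}_H$ into $\im({\hat d}_H)^*$, so it descends to homology, and by twisted Hodge theory the descended map is the restriction of $\maT_{t,H}$ to the finite-dimensional space $\cH$ of $\hat\Delta_H$-harmonic forms. At $t=0$ this restriction is $\star\oplus(-\star')$, an isomorphism by twisted Poincar\'e duality (Proposition~\ref{prop:pd}). The crux is that, on cohomology, $\maA_H^*\maJ\maA_H$ induces the same map as $\maJ'$ — the twisted analogue of the compatibility of Poincar\'e duality with an orientation-preserving homotopy equivalence — which I would deduce from fact (iii) together with the fact that $\maA_H$ is a Hodge-theoretic model of $h$ respecting orientations. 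Granting this, the induced endomorphism is $t$-independent on $0\le t\le 1/2$ (the combination $-(1-2t)\maJ'-2t\,\maA_H^*\maJ\maA_H$ descends to the isomorphism induced by $-\maJ'$), and on $1/2\le t\le 1$ it is a $2\times 2$ matrix whose four blocks are the scalars $\pm\sin(\pi t),\cos(\pi t)$ times fixed isomorphisms, arranged so that after factoring out the diagonal isomorphisms one is left with the scalar matrix $\bigl(\begin{smallmatrix}\sin(\pi t)&\cos(\pi t)\\ \cos(\pi t)&-\sin(\pi t)\end{smallmatrix}\bigr)$ of determinant $-1$; hence the map is an isomorphism for every $t\in[0,1]$. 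I expect this homotopy invariance of twisted Poincar\'e duality — concretely, that $\maA_H^*\maJ\maA_H$ and $\maJ'$ induce the same map on twisted cohomology — to be the only step that is not a formal manipulation, and I would establish it by transcribing the corresponding argument of \cite{HigsonRoeRho} to the $H$-twisted setting, using the twisted Hodge theory of \cite{MW}.
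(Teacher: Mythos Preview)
Your proposal is correct and follows essentially the same route as the paper: self-adjointness and the chain relation are checked blockwise, reducing to the odd-dimensional fact that $\star,\star'$ are self-adjoint and to the identity $(d_H)^*\star+(-1)^p\star d_{-\bar H}=0$ for pure imaginary $H$, together with the intertwining $\maA_H d'_{h^*H}=d_H\maA_H$. For the third item the paper is terser than you: it observes (via twisted Hodge theory) that the heat factors in $\maA_H$ induce the identity on cohomology, so $\maA_H$ is a quasi-isomorphism, and then declares ``the rest of the proof is classical''; your explicit identification of the crux---that $\maA_H^*\maJ\maA_H$ and $\maJ'$ agree on cohomology, i.e.\ homotopy invariance of twisted Poincar\'e duality, to be transcribed from \cite{HigsonRoeRho}---together with the convex-combination/rotation-matrix reduction is exactly what lies behind that phrase.
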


\begin{proof}
\begin{itemize}
\item Since we are working with odd dimensional manifolds, the star operators $\star$ and $\star'$ are self-adjoint. Therefore, it is easy to check that $\maT_{t, H} $ is then self-adjoint.
\item Again because we are working on odd dimensional manifolds, the following relations hold on $p$-forms:
$$
 (d_H)^* \star + (-1)^p \star d_{-{\bar H}} = 0 \text{ and }  (d'_{h^*H})^* \star ' + (-1)^p \star ' d'_{-{h^*{\bar H}}} = 0.
$$
Now, since $H$ is pure imaginary, so is $h^*H$ and a long but straightforward  inspection yields the announced formula.
\item We recall  that the operator $\maA_H$ induces an isomorphism from the twisted by $h^*H$ Hilbert module de Rham cohomology of $M'$ to the twisted by $H$ Hilbert module de Rham cohomology of $M$. 

Using again the twisted Hodge decomposition theorem \cite{MW}, this allows to deduce that $\maT_{t, H}$ induces an isomorphism between the twisted homologies. Indeed, one first works with smooth forms and then the exponentials of the twisted laplacians induce the identity operators on homologies.  The rest of the proof is classical.
\end{itemize}
\end{proof}

We may and will assume from now on that $H$ is pure imaginary. Recall that $\maB_H=d_H + (d_H)^*$ and similarly for $\maB'_{h^*H}$.

\begin{lemma}
Set ${\hat\maB}_H:= \maB_H\oplus \maB'_{h^* H}$. Then for  $0\leq t \leq 2$, the operators ${\hat\maB}_H \pm \maJ_{t, H}$ are invertible as regular operators on the Hilbert module 
$$
L^2(M, \maM\otimes \Lambda^{even}T^*M) \oplus L^2(M', \maM'\otimes \Lambda^{even}T^*M').
$$
\end{lemma}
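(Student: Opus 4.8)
The plan is to run the Higson--Roe argument \cite{HigsonRoe1,HigsonRoeRho,Keswani} with minimal changes, the key point being that invertibility of ${\hat\maB}_H\pm\maJ_{t,H}$ is governed entirely by a cohomological statement already in hand. Recall what the preceding Lemma and the surrounding discussion provide (with $H$ pure imaginary): for $0\le t\le 1$, $\maT_{t,H}$ is self-adjoint, satisfies the Poincar\'e-duality-type identity $\maT_{t,H}{\hat d}_H+(-1)^p({\hat d}_H)^*\maT_{t,H}=0$ on $p$-forms, and induces an isomorphism from the ${\hat d}_H$-homology to the $({\hat d}_H)^*$-homology; each $\maJ_{t,H}=i^{p(p-1)+m-1}\maT_{t,H}$ is adjointable (being built from $\maJ,\maJ'$ and the smoothing operator $\maA_H$), and for $1\le t\le 2$ one has $\maJ_{t,H}=e^{\pi i t}(-\maJ_{1,H})$. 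Finally, since $M$ and $M'$ are closed, ${\hat\maB}_H$ is a self-adjoint elliptic regular operator over $C^*\Gamma$; by the $C^*\Gamma$-coefficient form of twisted Hodge theory \cite{MW} it has closed range, the Hilbert module splits orthogonally as $\ker({\hat\maB}_H)\oplus\ker({\hat\maB}_H)^{\perp}$, the summand $\ker({\hat\maB}_H)$ is finitely generated projective and canonically isomorphic to the twisted de Rham cohomology of $M\sqcup M'$, and ${\hat\maB}_H^2\ge\epsilon>0$ on the complement.

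The first step is to check that, for every $t$, ${\hat\maB}_H\pm\maJ_{t,H}$ respects this splitting, so that invertibility may be tested on each summand separately. That $\maJ_{t,H}$ preserves $\ker({\hat\maB}_H)$ is the standard fact that a Poincar\'e-duality chain map (here $\maT_{t,H}$, via the displayed identity) carries twisted harmonic forms to twisted harmonic forms; preservation of the complement then follows, self-adjointness for $0\le t\le 1$ and the scalar relation $\maJ_{t,H}=e^{\pi i t}(-\maJ_{1,H})$ for $1\le t\le 2$ reducing the latter case to $t=1$. On $\ker({\hat\maB}_H)$ the operator ${\hat\maB}_H$ vanishes, so ${\hat\maB}_H\pm\maJ_{t,H}$ restricts to $\pm\maJ_{t,H}|_{\ker({\hat\maB}_H)}$; under the twisted Hodge isomorphism this is the endomorphism of twisted cohomology induced by $\maT_{t,H}$, up to the invertible phase $i^{p(p-1)+m-1}$ and (for $1\le t\le 2$) the nonzero scalar $e^{\pi i t}$, hence an isomorphism by the preceding Lemma.

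It remains to show ${\hat\maB}_H\pm\maJ_{t,H}$ is invertible on $\ker({\hat\maB}_H)^{\perp}$. There ${\hat\maB}_H$ is self-adjoint with a spectral gap, and $\maJ_{t,H}$ is a bounded operator of order zero, so ${\hat\maB}_H\pm\maJ_{t,H}$ is an elliptic operator of order one on a closed manifold and therefore has closed range over $C^*\Gamma$; one is then reduced to showing it has trivial kernel and cokernel on $\ker({\hat\maB}_H)^{\perp}$, which is exactly the step carried out in \cite{HigsonRoe1} (compare \cite{HigsonRoeRho,Keswani}) using the duality identity, and which transcribes to the twisted de Rham operator since only the relation $\maT_{t,H}{\hat d}_H+(-1)^p({\hat d}_H)^*\maT_{t,H}=0$ and the spectral gap of ${\hat\maB}_H$ enter. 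Combining with the previous step gives the invertibility of ${\hat\maB}_H\pm\maJ_{t,H}$ for all $t\in[0,2]$. The endpoint $t=0$ may also be seen directly: $\maJ_{0,H}=\maJ\oplus(-\maJ')$ is a self-adjoint involution anticommuting with ${\hat\maB}_H$ (because $\maD_H=i\maB_H\maJ$ is self-adjoint, and likewise on $M'$), whence $({\hat\maB}_H\pm\maJ_{0,H})^2={\hat\maB}_H^2+1\ge 1$, recovering the invertibility of $\maB_H\pm\maJ$ and $\maB'_{h^*H}\pm\maJ'$ recalled earlier.

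The part I expect to require the most care is the $C^*\Gamma$-module packaging: establishing the twisted Hodge decomposition and the principle that ${\hat\maB}_H\pm\maJ_{t,H}$ is invertible if and only if $\maJ_{t,H}$ is invertible on twisted harmonic forms, in the Mishchenko--Fomenko setting, and verifying that the Higson--Roe invertibility step on $\ker({\hat\maB}_H)^{\perp}$ goes through for the twisted de Rham operator. Once these are secured, the homology isomorphism supplied by the preceding Lemma closes the argument. A secondary nuisance is the interval $1\le t\le 2$, where $\maJ_{t,H}$ fails to be self-adjoint and the argument on the orthogonal complement must be phrased via triviality of kernel and cokernel rather than via a lower bound; the relation $\maJ_{t,H}=e^{\pi i t}(-\maJ_{1,H})$ keeps this case tied to $t=1$.
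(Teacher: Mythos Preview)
Your splitting strategy has a genuine gap at its first step. You assert that $\maJ_{t,H}$ (equivalently $\maT_{t,H}$) preserves $\ker({\hat\maB}_H)$, calling this ``the standard fact that a Poincar\'e-duality chain map carries twisted harmonic forms to twisted harmonic forms.'' But the single intertwining relation $\maT_{t,H}{\hat d}_H+(-1)^p({\hat d}_H)^*\maT_{t,H}=0$ only shows that $\maT_{t,H}$ sends $\ker({\hat d}_H)$ into $\ker(({\hat d}_H)^*)$; it does \emph{not} yield the companion relation $\maT_{t,H}({\hat d}_H)^*=\pm{\hat d}_H\maT_{t,H}$ that would be needed to send $\ker(({\hat d}_H)^*)$ into $\ker({\hat d}_H)$. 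For the genuine Hodge involution the second relation comes for free from $\star^2=\pm1$, but $\maT_{t,H}$ is \emph{not} an involution for $t\ne0$ (for instance at $t=1/2$ its $M'$-block is $-\maA_H^*\maJ\maA_H$, a smoothing operator), and taking adjoints of the given identity, using only self-adjointness of $\maT_{t,H}$, merely reproduces the same identity. Concretely, for harmonic $\omega$ you get $({\hat d}_H)^*\maT_{t,H}\omega=0$ but no control whatsoever over ${\hat d}_H\maT_{t,H}\omega$. So the decomposition $\ker({\hat\maB}_H)\oplus\ker({\hat\maB}_H)^\perp$ is not $\maJ_{t,H}$-stable, and both subsequent steps (invertibility on harmonics, and the deferral to Higson--Roe on the complement) never get started. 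Your treatment of $1\le t\le2$ is also incomplete: the relation $\maJ_{t,H}=-e^{\pi i t}\maJ_{1,H}$ reduces the question to invertibility of ${\hat\maB}_H+\lambda\maJ_{1,H}$ for unimodular $\lambda$, which does not follow from the cases $\lambda=\pm1$ alone.

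The paper avoids any Hodge splitting. It views $\maJ_{t,H}$ as a chain map from the complex $(\,-{\hat d}_H\,)$ to the complex $({\hat d}_H)^*$; by the preceding lemma this is a homology isomorphism, so its mapping cone is acyclic, and a general result on regular Hilbert--Poincar\'e complexes over $C^*\Gamma$ (see \cite{BR}, in the spirit of \cite{HigsonRoe1}) then gives invertibility of the doubled operator
\[
\begin{pmatrix} {\hat\maB}_H & \maJ_{t,H}^*\\ \maJ_{t,H} & {\hat\maB}_H \end{pmatrix}.
\]
Self-adjointness of $\maJ_{t,H}$ makes the diagonal $\{(v,v)\}$ and anti-diagonal $\{(v,-v)\}$ invariant, and restricting to these yields the invertibility of ${\hat\maB}_H\pm\maJ_{t,H}$. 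No preservation of harmonic forms is needed; the only inputs are the one intertwining identity and the homology isomorphism, both of which you already have.
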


\begin{proof}
The proof given in \cite{HigsonRoe1}, with the small modifications introduced in \cite{BR} to encompass regular operators, works in our situation. We recall it for the convenience of the reader. We consider the mapping cone complex of the chain map $\maJ_{t,H}$ from the complex defined by $-{\hat d}_H$ to the complex defined by $({\hat d}_H)^*$ whose differential is given by $\left(\begin{array}{cc} ({\hat d}_H)^* & 0 \\ \maJ_{t,H} &   {\hat d}_H  \end{array}\right)$. Using the previous lemma we know that $\maJ_{t,H}$ indices an isomorphism in cohomology and hence this mapping cone complex is acyclic. It is then proved in \cite{BR} that this implies that the operator 
$$
\left(\begin{array}{cc} ({\hat d}_H)^* + {\hat d}_H & \maJ_{t,H}^* \\ \maJ_{t,H} &   {\hat d}_H + ({\hat d}_{H})^* \end{array}\right),
$$
is invertible. Since $\maJ_{t,H}$ is self-adjoint for any $t$ and restricting this invertible operator to the the diagonal  of $L^2(M, \maM\otimes \Lambda^{even}T^*M) \oplus L^2(M', \maM'\otimes \Lambda^{even}T^*M')$, and which is obviously preserved, we deduce that ${\hat\maB}_H + \maJ_{t, H}$ is invertible. The same argument works for ${\hat\maB}_H - \maJ_{t, H}$ by using the anti-diagonal.
\end{proof}

Recall that $\Gamma$ is assumed to be torsion free. Following the general construction in \cite{HigsonRoe2} and the corresponding one for the group $\Gamma$ studied in \cite{HigsonRoeRho}, we get an analytic surgery group $\maS_1\Gamma$ which fits into the long exact sequence
$$
\cdots \to K_0(B\Gamma) \rightarrow K_0(C^*\Gamma) \rightarrow \maS_1\Gamma \rightarrow K_1(B\Gamma) \rightarrow K_1(C^*\Gamma)\to \cdots 
$$
where the map $K_*(B\Gamma) \rightarrow K_*(C^*\Gamma)$ is the Baum-Connes map. The main result of \cite{HigsonRoeRho} which is used here is the construction of the rho morphism associated with $\sigma$:
$$
\maS_1\Gamma \longrightarrow \R.
$$
This is constructed as follows. Associated with our $N$-dimensional representation $\sigma$, one associates in a similar way a group $\maS_1(\sigma)$ obtained by using the pair of representations $(\sigma, \ep_N)$ where $\ep_N$ is the trivial representation and then obtains a ``composition morphism'' 
$$
\maS_1\Gamma \longrightarrow \maS_1(\sigma).
$$
The group $\maS_1(\sigma)$ is actually isomorphic to a geometric group $\maS_1^{geom}(\sigma)$ for which the definition of the rho morphism is clearer and whose cycles we briefly recall now. A geometric cycle  is given by a quintuple $(X, E, f, D, n)$ where $(X,E,f)$ is an odd geometric Baum-Douglas $K$-cycle for $B\Gamma$, $D$ is a specific choice of Dirac operator and $n$ is an integer. What is important in the choice of $D$ is that the commutator $[D, \varphi]$ coincides with the usual commutator with the Dirac operator, hence is Clifford multiplication by the gradient of the function $\varphi$. The equivalence relations mimic the Baum-Douglas ones, but do involve the representation $\sigma$ and the APS index map where the integer $n$ is also involved. Higson and Roe used the rho invariant to construct the morphism 
$$
\rho: \maS^{geom}_1(\sigma) \longrightarrow \R.
$$ 
More precisely, let us denote by $\xi $ the boundary contribution in the APS theorem:
$$
\xi := (\eta - \dim\Ker )/ 2.
$$
Then
given $ (X, E, f,  D, n)$, as above, we may use $f$ and $\sigma$ to build up the twisted operator $D_{f,\sigma}$ and moreover tensor it by $E$ to get a self-adjoint operator $D_{f,\sigma}^E$ on $X$ whose $\xi$ invariant $\xi (D_{f,\sigma}^E)$ is well defined. Doing so for the trivial $1$-dimensional representation we get in the same way the invariant $\xi (D_f^E)$. We then set 
$$
\rho_\sigma (X, E, f,  D, n) : = \xi  (D_{f,\sigma}^E) - \dim (\sigma) \xi (D_f^E) + n.
$$
This construction induces precisely the Higson-Roe morphism $\rho: \maS^{geom}_1(\sigma) \rightarrow \R$ announced above.
We thus end up with the the composite map
$$
 \maS_1 \Gamma \rightarrow \maS_1(\sigma) \stackrel{\cong}{\leftarrow} \maS^{geom}_1(\sigma) \stackrel{\rho}{\longrightarrow} \R.
$$

Notice that the twisted signature operator is a specific choice of Dirac operator  since the commutator of the twist $H$ with any function is trivial. Hence, it is clear from the very definition  that we get a cycle which represents a class in $\maS_1^{geom}(\sigma)$ and which is given by 
$$
(M, E, f, D_H, 0) \amalg -(M', E', f\circ h, D'_{h^*H}, 0),
$$
where $E$ and $E'$ and the usual grassmannian bundles defining the (twisted) signature operators $D_H$ and $D'_{h^*H}$,  and $f:M\to B\Gamma$ is the classifying map for the universal cover of $M$. Here of course $\Gamma$ is the fundamental group of $M$ and hence also of $M'$.

We are now in position to finish the proof of the theorem.

{\em{End of the proof of Theorem \ref{HomotopyInvariance}:}} The rho morphism defined above differs from ours by the kernel dimension. Since this defect is a twisted cohomology dimension, we know that it is homotopy invariant \cite{MW}. Therefore, we can work with the rho morphism as well and imitate the Higson-Roe proof to deduce the homotopy invariance of our twisted by $H$ $\rho$ invariant.  
Hence,  it  remains to show that the image under the group morphism $\maS_1(\sigma) \longrightarrow \R$ of the class  $[h, H]$ defined above is precisely the difference
$$
\xi (M, \E, H) - \xi (M', h^*\E, h^*H).
$$
But this is again a  straightforward generalization of the argument given in \cite{HigsonRoeRho} (see pages 43-45).  Indeed, the image in  $\maS_1(\sigma)$ of the geometric class represented by  $
(M, E, f, D_H, 0) \amalg -(M', E', f\circ h, D'_{h^*H}, 0),
$ can be described using a Bott argument (see \cite{HigsonRoeRho}[Lemma 7.4]) and is precisely represented by   $[h, H]:= (P_H^\E, \alpha_{t,H})$, where  the operator $P_H^\E$ is $P_H^\E := \frac{1}{2} (\varphi ({\hat D}_H^\E) + Id)$ with $\varphi $ being the chopping function $\frac{2}{\pi} \arctan$, and the path $\alpha_{t, H}$ being given by
$$
\alpha_{t,H} := ({\hat \maB}_H - \maJ_{t, H})({\hat \maB}_H + \maJ_{t, H})^{-1}, \text{ for }0\leq t \leq 1
$$
  and 
  $$
    \alpha_{t,H} := ({\hat \maB}_H - \maJ_{1, H})({\hat \maB}_H + \maJ_{t, H})^{-1} \text{ for } 1\leq t\leq 2.
$$

\bigskip

\appendix
\section{Principle of not feeling the boundary}
In this appendix, we give details of what was claimed about the integrand in Proposition \ref{prop:sig1}
in Remark \ref{rem:sig1}.

We shall assume 
that ${DY}$ is the closed manifold which is the double of the manifold with boundary $Y$. $DY$
comes with a smooth Riemannian metric (since the metric on 
$Y$ is assumed to be a product metric near $\partial Y=X$), and the flat bundle $\E$ on $Y$ also extends to $D\E$ on $DY$. 
The adapted odd degree closed form $H$ on $Y$ induces a closed odd degree form $DH$
on $DY$, and let $B_{DH}^{D\E}$ denote the twisted signature operator on $DY$.
The key technical part of our paper is to use
a quantitative version of the following \\

\noindent{\bf Principle of not feeling the boundary.} {\em
Let $k_{+}(t,x,y)$ denote the heat kernel of $(B_{DH}^{D\E})^\dagger B_{DH}^{D\E}$
and $k_{-}(t,x,y)$ denote the heat kernel of $B_{DH}^{D\E}(B_{DH}^{D\E})^\dagger $
on $DY$.
Let $p_+(t,x,y)$ denote the heat kernel of  $(B_{H}^\E)^\dagger B_{H}^\E$ 
and $p_{-}(t,x,y)$ denote the heat kernel of $B_{H}^\E(B_{H}^\E)^\dagger$
which is associated to the Atiyah-Patodi-Singer global boundary conditions.
Then for $x, y \not\in \partial Y=X$ and as $t\downarrow 0$},
$$
k_\pm(t,x,y) \sim  p_\pm(t,x,y).
$$

This is a generalisation of the well known principle on functions due 
to M. Kac \cite{K}. 
For Laplacians acting on bundles and for relative or more generally for local boundary conditions, this is 
contained in \cite{DM}.
The result below is not optimal, but is sufficient for our purposes.
It follows from Theorem 1.11.7 in \cite{GilkeyBook} 2nd edition, and equation (3.4) in \cite{APS1} that

\begin{theorem}
In the notation above, for $x \in Y$ and $x\not\in \partial Y=X$ and as $t\downarrow 0$,
$$
\tr(k_\pm(t,x,x)) = \tr( p_\pm(t,x,x)) + O(t^{3/8}).
$$
In particular, the coefficient of $t^0$ in the small time asymptotic expansions of $\tr(k_\pm(t,x,x))$ and of $ \tr( p_\pm(t,x,x))$
coincide, for $x \in Y$ and $x\not\in \partial Y=X$.
\end{theorem}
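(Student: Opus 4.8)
The plan is to adapt the heat-kernel parametrix construction of Atiyah--Patodi--Singer for global boundary conditions (Sections~2 and~3 of \cite{APS1}) and to combine it with standard off-diagonal Gaussian estimates for generalized Laplacians on the closed double $DY$. First I would record that all four operators in play, $(B_{DH}^{D\E})^\dagger B_{DH}^{D\E}$ and $B_{DH}^{D\E}(B_{DH}^{D\E})^\dagger$ on $DY$, and $(B_H^\E)^\dagger B_H^\E$ and $B_H^\E (B_H^\E)^\dagger$ on $Y$, are Laplace-type operators: since $B^\E_H$ is first order and elliptic with scalar principal symbol $\sigma_1(B^\E_H)(\xi)^2=|\xi|^2\,\mathrm{Id}$, their heat kernels on $DY$ satisfy the usual short-time bounds $|k_\pm(t,x,y)|\le Ct^{-m}e^{-d(x,y)^2/(5t)}$. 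The only feature peculiar to the twisted setting is that these Laplacians do not preserve the degree of forms; this is harmless here, since we use only the Laplace-type structure together with the product form $B^\E_H=\sigma(\partial_r+D^\E_H)$ near $\partial Y$ established in Lemma~\ref{lem:bdry-iden}.

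Next I would build a parametrix for $e^{-tp_\pm}$ on $Y$ exactly as in \cite{APS1}. Fix a collar $Y_\epsilon\cong(-\epsilon,0]\times X$ and a number $\delta$ with $0<2\delta<\epsilon$, and choose cutoff functions subordinate to the decomposition of $Y$ into its interior part $Y\setminus Y_{\epsilon/2}$ and the collar $Y_\epsilon$, together with the associated excision functions. On the interior part one glues in the heat kernel $k_\pm$ of the doubled operator $B_{DH}^{D\E}$, which is legitimate because $B_H^\E$ and $B_{DH}^{D\E}$ agree there; on the collar one glues in the model heat kernel on the half-cylinder $(-\infty,0]\times X$ for $\sigma(\partial_r+D^\E_H)^\dagger\sigma(\partial_r+D^\E_H)$ subject to the Atiyah--Patodi--Singer condition $P^+(\,\cdot\,|_{r=0})=0$, which is solved explicitly by the spectral decomposition of the self-adjoint boundary operator $D^\E_H$. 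Duhamel's principle then expands $p_\pm(t,x,y)$ minus the glued approximate kernel as a convergent series of time-convolutions whose kernels are supported, in the relevant variable, in the overlap region $Y_\epsilon\setminus Y_{\epsilon/2}$.

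Then I would run the ``not feeling the boundary'' estimate. Fix $x\in Y$ with $d(x,\partial Y)\ge\delta>0$. The interior term of the glued kernel evaluated at $(x,x)$ is literally $k_\pm(t,x,x)$, while every correction coming from the collar model and from the Duhamel error is a time-convolution containing a factor that is a heat kernel evaluated between $x$ and a point of the overlap region, hence at spatial distance bounded below by a fixed $c>0$; by the Gaussian off-diagonal bounds each such term is $O(e^{-c^2/(Ct)})=O(t^\infty)$. Hence $\tr(k_\pm(t,x,x))-\tr(p_\pm(t,x,x))=O(t^\infty)$, which is considerably stronger than the asserted $O(t^{3/8})$; the exponent $3/8$ is merely the crude bound obtained by quoting Theorem~1.11.7 of \cite{GilkeyBook} and equation~(3.4) of \cite{APS1} without tracking the exponential decay, and it already forces the constant ($t^0$) coefficients in the two small-time expansions to coincide. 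This is the promised extension, to the nonlocal boundary condition, of Kac's principle \cite{K} and of its bundle version for local conditions in \cite{DM}.

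The main obstacle is precisely the nonlocality of the Atiyah--Patodi--Singer condition: for local (relative or absolute) conditions one has a genuine method of images on the double and the comparison with $DY$ is essentially immediate, whereas here $P^+$ is a pseudodifferential projection, so one must instead compare with the explicit half-cylinder model and control the gluing. Once the product structure of $B^\E_H$ near $\partial Y$ from Lemma~\ref{lem:bdry-iden} is used to diagonalize that model by the eigenfunctions of $D^\E_H$, the remaining work --- estimating the Duhamel series and tracking supports --- is routine, and I would therefore keep it brief.
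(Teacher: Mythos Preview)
The paper provides no proof of this theorem beyond the sentence ``It follows from Theorem 1.11.7 in \cite{GilkeyBook} 2nd edition, and equation (3.4) in \cite{APS1}''; the statement is simply asserted to be a consequence of those two references. Your proposal is therefore not so much a different route as an explicit unpacking of what those citations amount to: equation~(3.4) of \cite{APS1} is precisely the APS half-cylinder parametrix glued to an interior parametrix via Duhamel, and Gilkey's theorem supplies the off-diagonal control and the resulting asymptotic expansion. Your argument is correct, and you are right that the Duhamel remainder, evaluated at an interior point and paired against the collar region, is in fact $O(t^\infty)$ by the Gaussian off-diagonal decay; the $O(t^{3/8})$ in the statement is, as you note, just the coarser bound one gets by quoting the general results without tracking the exponential factor. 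The only thing your sharper version buys is aesthetics --- either bound suffices to match the $t^0$ coefficients, which is all the paper needs for Corollary~\ref{cor:indexcoeff}.
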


Therefore it suffices to analyse the  small time asymptotic expansions of $ \tr( k_\pm(t,x,x))$.

Let $\widetilde k_\pm(t,x,y)$ denote the heat kernel of the operator  $\widetilde{(B_{DH})^\dagger}\widetilde B_{DH}$
and $\widetilde k_{-}(t,x,y)$ denote the heat kernel of $\widetilde{B_{DH}}\widetilde{(B_{DH})^\dagger }$
on the universal covering space $\widetilde{DY}$ of the closed manifold $DY$.
Then one has

\begin{theorem}
In the notation above,  for $x\in \widetilde{DY}$ and as $t\downarrow 0$
$$
|\tr(k_\pm(t,\bar x,\bar x)) - \tr(\widetilde k_\pm(t,x,x))\Rank(\E)|
  \le C_1t^{-n/2}\sum_{\gamma\in\Pi\setminus\{1\}}
  \exp\Big[-C_2\Big(\frac{d(x,x\gamma)}{t}\Big)^{2}\Big].   
$$
where $\bar x\in DY$ stands for the projection of $x\in\widetilde{DY}$, and $\Pi$ denotes the fundamental group of $DY$.
\end{theorem}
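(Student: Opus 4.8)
The plan is to reduce the comparison of the two heat kernels to the classical off-diagonal Gaussian estimate for a generalized Laplacian on a manifold of bounded geometry, via the standard periodization formula that writes a heat kernel on $DY$ with coefficients in a flat bundle as a sum over $\Pi=\pi_1(DY)$ of the lifted heat kernel on $\widetilde{DY}$, weighted by the holonomy representation. The $\gamma=1$ term of this sum will account for $\Rank(\E)\,\tr\widetilde k_\pm(t,x,x)$, and all the remaining terms will be controlled by the Gaussian factor because distinct group elements move $x$ far away.

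First I would lift the geometric data. Since $D\E$ is flat and hermitian, its pull-back to $\widetilde{DY}$ is canonically trivial, $\widetilde{D\E}\cong\widetilde{DY}\times E$, with $\Pi$ acting by deck transformations on the base and by the unitary holonomy representation $\rho\colon\Pi\to U(E)$ on the fibre; the form $DH$ lifts to the $\Pi$-invariant form $\widetilde{DH}$, and the operators $(B_{DH}^{D\E})^\dagger B_{DH}^{D\E}$ and $B_{DH}^{D\E}(B_{DH}^{D\E})^\dagger$ lift to $\Pi$-invariant generalized Laplacians on $\Omega^\bullet(\widetilde{DY})\otimes E$ of the form $\widetilde\Delta_\pm\otimes\mathrm{Id}_E$, whose heat kernels are $\widetilde k_\pm(t,x,y)\otimes\mathrm{Id}_E$. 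Next I would invoke the averaging formula: for $\bar x,\bar y\in DY$ with lifts $x,y$,
\[
k_\pm(t,\bar x,\bar y)\;=\;\sum_{\gamma\in\Pi}\bigl(\widetilde k_\pm(t,x,y\gamma)\otimes\rho(\gamma)\bigr),
\]
the series converging absolutely for $t>0$. Putting $\bar y=\bar x$ and taking the pointwise fibrewise trace, the term $\gamma=1$ contributes exactly $\Rank(\E)\,\tr\bigl(\widetilde k_\pm(t,x,x)\bigr)$, since $\tr\rho(1)=\Rank(\E)$, whence
\[
\bigl|\tr k_\pm(t,\bar x,\bar x)-\Rank(\E)\,\tr\widetilde k_\pm(t,x,x)\bigr|\;\le\;\Rank(\E)\,2^{n}\sum_{\gamma\in\Pi\setminus\{1\}}\bigl\|\widetilde k_\pm(t,x,x\gamma)\bigr\|,
\]
using $\|\rho(\gamma)\|=1$ and that the form bundle has rank $2^{n}$.

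The analytic heart of the argument is then the Gaussian off-diagonal bound
\[
\bigl\|\widetilde k_\pm(t,x,y)\bigr\|\;\le\;C_1\,t^{-n/2}\exp\!\Bigl(-C_2\,\frac{d(x,y)^{2}}{t}\Bigr),\qquad 0<t\le1,
\]
which holds because $\widetilde\Delta_\pm$ is a generalized Laplacian on the complete manifold $\widetilde{DY}$, and the latter has bounded geometry since it covers the closed manifold $DY$; the zeroth-order Weitzenb\"ock-type term produced by $DH$ and the curvature is uniformly bounded and only contributes a harmless factor $e^{Ct}$ on $0<t\le1$. Substituting this estimate into the previous inequality gives the asserted bound (with $x\gamma$ in place of $y$) after renaming constants. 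The resulting series is finite for small $t$: since $\Pi$ acts freely, properly discontinuously and cocompactly, $\#\{\gamma:d(x,x\gamma)\le R\}$ grows at most exponentially in $R$, which the Gaussian factor dominates.

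The main obstacle is precisely justifying the Gaussian heat-kernel bound for the lifted, bundle-valued operator, which does not preserve the grading of forms. This is where the quantitative ``principle of not feeling the boundary'' of the preceding section is the relevant model; here one argues instead that $\widetilde\Delta_\pm$ differs from the Hodge Laplacian on $\Omega^\bullet(\widetilde{DY})\otimes E$ by a uniformly bounded zeroth-order operator, so that Duhamel's principle (or a direct finite-propagation-speed / Davies--Gaffney argument) transfers the known scalar Gaussian estimate, uniformly for $t\in(0,1]$; the flat, hence locally constant, nature of $D\E$ and $\widetilde{DH}$ is what keeps all the constants uniform under the deck action, cf. \cite{DM}.
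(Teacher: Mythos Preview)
Your proposal is correct and follows essentially the same route as the paper: periodize via the Selberg-type formula $k_\pm(t,\bar x,\bar y)=\sum_{\gamma\in\Pi}\widetilde k_\pm(t,x,y\gamma)\rho(\gamma)$, isolate the $\gamma=1$ term (which gives $\Rank(\E)\,\tr\widetilde k_\pm$), bound the remainder using unitarity of $\rho$ together with an off-diagonal Gaussian estimate on the cover, and note that the resulting $\Pi$-sum converges by exponential volume growth (the paper invokes Milnor's word-metric bound explicitly). The only visible difference is cosmetic: the paper cites Br\"uning--Sunada for the Gaussian bound, whereas you sketch it via bounded geometry and Duhamel; also, you write the standard exponent $-C_2\,d(x,y)^2/t$, while the paper's displayed statement has $-C_2\,(d(x,y)/t)^2$, which is almost certainly a typographical slip there rather than a discrepancy in your argument.
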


\begin{proof}
By the Selberg principle, one has for $x,y\in\widetilde{DY}$,
\[ k_{\pm}(t,\bar x,\bar y)=
   \sum_{\gamma\in\Pi}\widetilde k_\pm(t,x,y\gamma)\rho(\gamma),   
   \]
where $\bar x\in DY$ stands for the projection of $x\in\widetilde{DY}$, $\Pi$ denotes the fundamental group of $DY$
and $D\E$ is determined by the unitary representation $\rho$ of $\Pi$.
It follows that 
\[  \tr(k_{\pm}(t,\bar x,\bar x)-\tr(\widetilde k_\pm(t,x,x)) \Rank(\E)
    =\sum_{\gamma\in\Pi\setminus\{1\}}\tr(\widetilde k_\pm(t,x,x\gamma)\rho(\gamma)), 
      \]
Since $\rho$ is a unitary representation, and using the matrix estimate $|\tr(AB)| \le ||B||  \tr(|A|)$, one has
\[ |\tr(k_{\pm}(t,\bar x,\bar x)-\tr(\widetilde k_\pm(t,x,x)) \Rank(\E)|\le
   \sum_{\gamma\in\Pi\setminus\{1\}} \tr(|\widetilde k_\pm(t,x,x\gamma)|).  \]
The off-diagonal Gaussian estimate for the heat kernel on $\widetilde{DY}$ is
\cite{BrSu}
\[  |\widetilde k_\pm(t,x,y)|\le C_1t^{-n/2}
       \exp\Big[-C_2\Big(\frac{d(x,y)}{t}\Big)^{2}\Big],   \]
where $d(x,y)$ is the Riemannian distance between $x,y\in\widetilde{DY}$.
Therefore 
\[  |\tr(k_{\pm}(t,\bar x,\bar x)-\tr(\widetilde k_\pm(t,x,x)) \Rank(\E)|
  \le C_1\Rank(\E)\, t^{-n/2}\sum_{\gamma\in\Pi\setminus\{1\}}
  \exp\Big[-C_2\Big(\frac{d(x,x\gamma)}{t}\Big)^{2}\Big].   \]
By Milnor's theorem \cite{Mi}, there is a positive constant $C_3$ such that 
$d(x,x\gamma)\ge C_3\ell(\gamma)$, where $\ell$ denotes a word metric on $\Pi$.
Moreover, the number of elements in the sphere $S_l$ of radius $l$ in $\Pi$
satisfies $\#S_l\le C_4\,e^{C_5l}$ for some positive constants $C_4,C_5$.
Therefore
\begin{align*}
\sum_{\gamma\in\Pi\setminus\{1\}}
 \exp\Big[-C_2\Big(\frac{d(x,x\gamma)}{t}\Big)^{2}\Big]            
\le&\sum_{\gamma\in\Pi\setminus\{1\}}
 \exp\big[-C'(\ell(\gamma)/t)^{2}\big]                             \\ 
\le&\;\sum_{l=1}^\infty\exp\big[-C'(l/t)^{2}\big]\,C_4\,e^{C_5l} \\
\le&\;C_4\exp[-C't^{-2}]\sum_{l=1}^\infty
 \exp\big[-C'(l^{2}-1)+C_5l\big]                        
\end{align*}
for all $t$ such that $0<t\le1$ for some positive constant $C'$.
Since the infinite sum over $l$ converges, hence
the result.
\end{proof}

It follows from the theorem above that the small time asymptotic expansions of $ \tr( k_\pm(t,\bar x,\bar x))$
are equivalent to the small time asymptotic expansions of $ \tr( \widetilde k_\pm(t,x,x)) \Rank(\E)$.
In particular, we get 

\begin{corollary}\label{cor:indexcoeff}
Let $\alpha_0^\E(x)$ denote the coefficient of $t^0$ in the small time asymptotic expansions of $\tr(k_+(t,x,x))- \tr(k_-(t,x,x))$ 
and $\alpha_0(x)$ denote the coefficient of $t^0$ in the small time asymptotic expansions of $\tr(\widetilde k_+(t,x,x))- \tr( \widetilde k_-(t,x,x))$.
Then 
$$
\alpha_0^\E(x) = \alpha_0(x)  \Rank(\E).
$$

\end{corollary}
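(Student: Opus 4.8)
The plan is to read the corollary off directly from the two theorems that precede it, together with the exponential decay of the error term produced in the proof of the last of them. First I would record that standard local heat-kernel theory (for instance Theorem 1.11.7 in \cite{GilkeyBook}) guarantees small-time asymptotic expansions
\[ \tr(k_\pm(t,\bar x,\bar x)) \sim \sum_{j\ge 0} a_j^\pm(\bar x)\, t^{(j-n)/2}, \qquad \tr(\widetilde k_\pm(t,x,x)) \sim \sum_{j\ge 0} \widetilde a_j^\pm(x)\, t^{(j-n)/2}, \]
the first on the closed manifold $DY$ and the second on $\widetilde{DY}$; in the latter case one uses that the coefficients are local invariants of the operator and the metric, and $\widetilde{DY}$ is locally isometric to $DY$, so the expansion makes sense even though $\widetilde{DY}$ is noncompact. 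Thus $\alpha_0^\E(x) = a_n^+(\bar x) - a_n^-(\bar x)$ and $\alpha_0(x) = \widetilde a_n^+(x) - \widetilde a_n^-(x)$ are well defined, being the coefficients of $t^0$.

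Next I would invoke the theorem immediately above, which gives, for $x\in\widetilde{DY}$ with projection $\bar x\in DY$,
\[ \bigl| \tr(k_\pm(t,\bar x,\bar x)) - \Rank(\E)\,\tr(\widetilde k_\pm(t,x,x)) \bigr| \le C_1 t^{-n/2} \sum_{\gamma\in\Pi\setminus\{1\}} \exp\left[ -C_2\left( \frac{d(x,x\gamma)}{t} \right)^2 \right]. \]
The estimates carried out inside the proof of that theorem --- Milnor's linear lower bound $d(x,x\gamma)\ge C_3\,\ell(\gamma)$ and the exponential bound $\#S_l\le C_4\,e^{C_5 l}$ on word-length spheres in $\Pi$ --- show that, for $0<t\le 1$, the right-hand side is dominated by $C\, t^{-n/2}\exp[-C' t^{-2}]$, which is $o(t^N)$ for every $N$. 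Hence $\tr(k_\pm(t,\bar x,\bar x)) = \Rank(\E)\,\tr(\widetilde k_\pm(t,x,x)) + O(t^\infty)$ as $t\downarrow 0$, and subtracting the minus equation from the plus equation,
\[ \tr(k_+(t,\bar x,\bar x)) - \tr(k_-(t,\bar x,\bar x)) = \Rank(\E)\bigl( \tr(\widetilde k_+(t,x,x)) - \tr(\widetilde k_-(t,x,x)) \bigr) + O(t^\infty). \]
Since a term that is $O(t^\infty)$ contributes nothing to any coefficient of the (uniquely determined) asymptotic expansions on either side, comparing the coefficients of $t^0$ gives $\alpha_0^\E(x) = \Rank(\E)\,\alpha_0(x)$, which is exactly what was claimed and what is needed to complete Remark \ref{rem:sig1} and Proposition \ref{prop:sig1}.

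There is essentially no obstacle here: all of the genuine analysis --- the off-diagonal Gaussian estimate on $\widetilde{DY}$, the Selberg-type expansion of $k_\pm$ over $\Pi$, and the subexponential control of the lattice sum --- has already been done in the two theorems above. The only point demanding a line of care is the observation that the remainder is not merely small but beyond all orders in $t$, so that it is invisible to the formal heat expansion; this is immediate from the factor $\exp[-C' t^{-2}]$ outgrowing every power of $t$ as $t\downarrow 0$.
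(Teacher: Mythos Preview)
Your argument is correct and is exactly the approach the paper has in mind: the paper does not give a separate proof of the corollary but simply records, in the sentence preceding it, that the theorem just proved forces the small-time asymptotic expansions of $\tr(k_\pm(t,\bar x,\bar x))$ and $\Rank(\E)\,\tr(\widetilde k_\pm(t,x,x))$ to agree, whence the $t^0$ coefficients of the supertraces coincide. You have simply made explicit the one point the paper leaves implicit, namely that the lattice-sum bound established inside the proof of the theorem is $O(t^\infty)$ and therefore invisible to the asymptotic expansion.
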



\begin{thebibliography}{99}

\bibitem{APS1}
M.F.~Atiyah, V.K. ~Patodi and I.M.~Singer, 
{\em Spectral asymmetry and Riemannian geometry}. I. 
Math. Proc. Cambridge Philos. Soc. {\bf 77} (1975), 43-69.
MR0397797, Zbl 0297.58008 

\bibitem{APS2}
M.F.~Atiyah, V.K. ~Patodi and I.M.~Singer,  
{\em Spectral asymmetry and Riemannian geometry}. II. 
Math. Proc. Cambridge Philos. Soc. {\bf 78} (1975), no. 3, 405-432. 
MR0397798, Zbl 0314.58016

\bibitem{APS3}
M.F.~Atiyah, V.K. ~Patodi and I.M.~Singer, 
{\em Spectral asymmetry and Riemannian geometry}. III. 
Math. Proc. Cambridge Philos. Soc. {\bf 79} (1976), no. 1, 71-99. 
MR0397799, Zbl 0325.58015

\bibitem{AS3}
M.F. Atiyah and I.M. Singer, \emph{The index of elliptic operators, {III}},
  Ann. of Math. \textbf{87} (1968), 546--604.
MR0236952

\bibitem{AS}
M.F.~Atiyah and G.~Segal,
{\it Twisted $K$-theory and cohomology}, in: P.A.~Griffith (Ed.),
Inspired by S.S.\ Chern, Nankai Tracts Math., vol.\ 11, World Sci.\ Publ.,
Hackensack, NJ, 2006, pp.~5-43, [{\tt arXiv:math.AT/0510674}],
MR2307274, Zbl~1138.19003.

\bibitem{BaumConnes}
P. Baum and A. Connes, {\em{Chern character for discrete groups. A fte of topology}}, 163Ð232, Academic Press, Boston, MA, 1988.
MR0928402

\bibitem{BenameurHeitschJDG}
M. T. Benameur and J. Heitsch, {\it The twisted higher harmonic signature for foliations.} J. Differential Geom. 87 (2011), no. 3, 389-467.  [{\tt arXiv:0711.0352}]
MR2819544



\bibitem{BM}
M.T.~Benameur and V.~Mathai,
{\it Conformal invariants of twisted Dirac operators and positive scalar curvature},
Journal of Geometry and Physics, 70 (2013) 39-47. [{\tt arXiv:1210.0301}] 
MR3054283

\bibitem{BM2}
M.T.~Benameur and V.~Mathai,
{\em Spectral sections, twisted rho invariants and positive scalar curvature}, 25 pages.
 [{\tt arXiv:1309.5746}] 

\bibitem{BP}
M.T.~Benameur and P.~Piazza,
{\em Index, eta and rho invariants on foliated bundles}. 
Ast\'erisque No. 327 (2009), 201-287.
MR2642361 

\bibitem{BR}
M.T.~Benameur and I.~Roy,
{\em Leafwise homotopies and Hilbert-Poincare complexes. I. Regular HP-complexes and leafwise pull-back maps}
 [{\tt arXiv:1109.0263}]	

\bibitem{BGV}
N.~Berline, E.~Getzler and M.~Vergne,
Heat kernels and Dirac operators, Grund.\ Math.\ Wissen., {\bf 298},
Springer-Verlag, Berlin, 1992, MR1215720, Zbl~0744.58001.

\bibitem{Bismut}
J-M.~Bismut, 
{\em A local index theorem for non-K\"ahler manifolds}. 
Math. Ann. 284 (1989), no. 4, 681-699.
MR1006380

\bibitem{BF}
J-M.~Bismut and D.S.~Freed, 
{\em The analysis of elliptic families. I. Metrics and connections on determinant bundles}. 
Comm. Math. Phys. {\bf 106} (1986), no. 1, 159-176. 
MR0853982 

 
\bibitem{BoTu}
R.~Bott and L.~Tu,
Differential forms in algebraic topology,
Grad.\ Texts Math., vol.~82, Springer-Verlag, New York-Berlin, 1982,
MR0658304, Zbl~0496.55001.
 
\bibitem{BCMMS}
P.~Bouwknegt, A.~Carey, V.~Mathai, M.~Murray and D.~Stevenson,
{\it Twisted K-theory and K-theory of bundle gerbes},
Commun.\ Math.\ Phys.\ {\bf 228} (2002) 17-49, [{\tt arXiv:hep-th/0106194}],
MR1911247, Zbl~1036.19005.

\bibitem{BEM}
P.~Bouwknegt, J.~Evslin and V.~Mathai,
{\it $T$-duality: topology change from $H$-flux},
Commun.\ Math.\ Phys.\ {\bf 249} (2004) 383-415,
[{\tt arXiv:hep-th/0306062}], MR2080959, Zbl~1062.81119.

\bibitem{BrSu}
J.~Br\"uning and T.~Sunada,
On the spectrum of periodic elliptic operators,
Nagoya Math.\ J.\ 126 (1992) 159-171. 
MR1171598

\bibitem{CDP}
A.~Cardona, C.~Ducourtioux and S.~Paycha, 
{\it From tracial anomalies to anomalies in quantum field theory}. 
Comm. Math. Phys. 242 (2003), no. 1-2, 31ÔøΩ∂5.
MR2018268 

 \bibitem{Haagerup}
P.-A. Cherix, M. Cowling, P. Jolissaint, P. Julg, A.Valette,  Groups with the Haagerup property. Gromov's a-T-menability. Progress in Mathematics, 197. Birkh\"auser Verlag, Basel, 2001. 
 MR1852148

 
 \bibitem{deRham}
G. de Rham,
{Differentiable manifolds. Forms, currents, harmonic forms}, 
Grundlehren der Mathematischen Wissenschaften 266. Springer-Verlag, Berlin, 1984. x+167 pp.
MR0760450

 \bibitem{DM}
J. Dodziuk and V. Mathai,
Approximating L$^2$ invariants of amenable covering spaces: A heat kernel approach.
Contemporary Math. 
{\bf 211} (1997) 151-167.
MR1476985

 \bibitem{GHR} 
S. J. Gates, C. M. Hull, M. Rocek, 
{\it Twisted multiplets and new supersymmetric nonlinear sigma models}, 
Nuclear Phys. B 248 (1984), 157.
 MR0776369
 
 \bibitem{Getzler} 
E.~Getzler, 
{\it The odd Chern character in cyclic homology and spectral flow}. 
Topology {\bf 32} (1993), no. 3, 489-507. 
MR1231957
 
\bibitem{GilkeyBook}
P.B.~Gilkey,
Invariance theory, the heat equation, and the Atiyah-Singer index theorem,
Math.\ Lecture Series, vol.~11, Publish or Perish, Inc., Wilmington, DE,
1984, MR0783634, Zbl~0565.58035;
2nd ed., (Studies Adv.\ Math.), CRC Press, Boca Raton, FL, 1995,
MR1396308, Zbl~0856.58001.

 
\bibitem{Gua1}
M.~Gualtieri,
{Generalized complex geometry},
Oxford Univ. DPhil thesis, 2003, [{\tt arXiv:math/0401221}].

\bibitem{HigsonRoeRho} N. Higson and J.  Roe {\em{ K-homology, assembly and rigidity theorems for relative eta invariants.}} Pure Appl. Math. Q. 6 (2010), no. 2, Special Issue: In honor of Michael Atiyah and Isadore Singer, 555-601.
MR2761858

\bibitem{HigsonRoe1} N. Higson and J.  Roe {\em{ Mapping surgery to analysis. I. Analytic signatures.}} K-Theory 33 (2005), no. 4, 277-299. 
MR2220522

\bibitem{HigsonRoe2} N. Higson and J.  Roe {\em{ Mapping surgery to analysis. II. Analytic signatures.}}  Geometric signatures. K-Theory 33 (2005), no. 4, 301-324.
MR2220523

\bibitem{Hi}
N.~Hitchin,
{\it Generalized Calabi-Yau manifolds},
Quart.\ J.\ Math.\ {\bf 54} (2003) 281-308, [{\tt arXiv:math/0209099}],
MR2013140, Zbl~1076.32019.
 
 \bibitem[K]{K} M.~Kac 
\textit{Can one hear the shape of a drum?}
Amer. Math. Monthly {\bf 73} (1966) 1--23.
MR0201237
 
\bibitem{Keswani} N. Keswani {\em Relative eta-invariants and C$^*$-algebra K-theory.} Topology 39 (2000), no. 5, 957-983.
 MR1763959
 
\bibitem{Kostant}
B. Kostant,  
{\it A cubic Dirac operator and the emergence of Euler number multiplets of representations for equal rank subgroups}. 
Duke Math. J. 100 (1999), no. 3, 447-501.
 MR1719734  
 
 \bibitem{Lance}  E. C. Lance, {Hilbert C*-modules. A toolkit for operator algebraists.} London Mathematical Society Lecture Note Series, 210. Cambridge University Press, Cambridge, 1995. x+130 pp.
 MR1325694
 
\bibitem{Landweber}
G.~Landweber,
Dirac operators on loop spaces
Harvard University Ph.D. thesis, 1999.

\bibitem{MW}
V.~Mathai and S.~Wu,
{\em Analytic torsion for twisted de Rham complexes}, 
J. Differential Geom., {\bf 88} no.2  (2011) 297-332.
 [{\tt arXiv:math/0810.4204}]
 MR2838268

\bibitem{MW2}
V.~Mathai and S.~Wu, 
{\em Twisted analytic torsion},
Sci. China Math. {\bf 53} (2010), no. 3, 555-563. 
 [{\tt arXiv:math/0912.2184}] 
MR2608312, Zbl 1202.58019

\bibitem{MW3}
V.~Mathai and S.~Wu, 
{\em Analytic torsion of ${\mathbb Z}_2$-graded elliptic complexes}. 
Contemp. Math., 546, (2011) 199-212.
MR2815136

\bibitem{Mi}
J.~Milnor,
A note on curvature and fundamental group,
J.\ Diff.\ Geom.\ 2 (1968) 1-7.
MR0232311

\bibitem{RS}
D.B.~Ray and I.M.~Singer,
{\it $R$-torsion and the Laplacian on Riemannian manifolds},
Adv.\ Math.\ {\bf 7} (1971) 145-210, 
MR0295381, Zbl~0239.58014.

\bibitem{RW}
R.~Rohm and E.~Witten,
{\it The antisymmetric tensor field in superstring theory},
Ann.\ Phys.\ {\bf 170} (1986) 454-489, 
MR0851628.
 
\bibitem{Schwarz}
G.~Schwarz, 
Hodge decomposition—· method for solving boundary value problems. 
Lecture Notes in Mathematics, 1607. Springer-Verlag, Berlin, 1995.
MR1367287
  
\bibitem{Slebarski}
S. Slebarski, 
{\it Dirac operators on a compact Lie group}. 
Bull. London Math. Soc. 17 (1985), no. 6, 579-583. 
MR0813743 


\bibitem{St86}
A. Strominger, 
{\it Superstrings with torsion}, Nuclear Phys. B 274 (1986), 253-284.
MR0851702

\bibitem{VanN}
P. Van Nieuwenhuizen, {\it Supergravity}, Phys. Rep. 68 (1981), 189-398.
MR0615178

\end{thebibliography}
\end{document}